\documentclass[amsart]{article}

\usepackage{latexsym}
\usepackage{amssymb}
\usepackage{amstext}
\usepackage{amscd}
\usepackage[dvips]{graphicx}
\usepackage{graphicx}
\usepackage{amsthm}
\usepackage{amsfonts}	
\usepackage{amsmath}







\newtheorem{teo}	{Theorem}[section]

\newtheorem*{teoa}	{Theorem A}
\newtheorem*{teob}	{Theorem B}

\newtheorem{conj}	{Conjecture}

\newtheorem{defi}	{Definition}

\newtheorem{pr}		{Proposition}[section]
\newtheorem{lm}		{Lemma}[section]
\newtheorem{cor}	{Corollary}[section]

\newtheorem{rem}	{Remark}

\newtheorem*{corb}{Corollary C}


\newcommand{\N}		{\mathbb{N}}
\newcommand{\Z}		{\mathbb{Z}}

\newcommand{\C}		{\mathbb{C}}
\newcommand{\Cb}	{\overline{\mathbb{C}}}
\newcommand{\Ctwo}	{{\mathbb{C}}^2}


\newcommand{\im}	{{\rm Im\, }}




\newcommand{\bpl}	{\bigl\{ }
\newcommand{\bpr}	{\bigr\} }


\newcommand{\ww}  [2]	{W^{#1}({#2})}
\newcommand{\www} [3]	{W^{#1}_{#3}({#2})}




\newcommand{\graph}	{{\rm graph\, }}
\renewcommand{\d}	{{\rm dist\, }}


\newcommand{\reg}	{\mathcal{R}}
\newcommand{\per}	{\mathcal{P}{\hspace{-2pt}}{\it er}}


\newcommand{\e}		{\varepsilon}

\newcommand{\supp}	{{\rm supp\, }}

\renewcommand{\mod}	{{\rm mod}}


\begin{document}

\title{\bf Equivalent conditions for hyperbolicity on
partially hyperbolic holomorphic map}

\author{Francisco Valenzuela-Henr\'{\i}quez\thanks{Thanks for the
financial support to CNPq-IMPA and PEC-PG and Universidad
Cat\'olica de Valparaiso.}\\
\texttt{\small{fvalenzh@impa.br,
pancho.valenzuela.math@gmail.com}}}

\maketitle

\begin{abstract}
Let $f:\C\sp n\rightarrow\C\sp n$, $n\geq2$, be a  biholomorphism  and
 let $\Lambda\subseteq \C\sp n$ be a compact $f$-invariant set  such
that $f|\Lambda$ is partially hyperbolic. We give equivalent
conditions to hyperbolicity on $\Lambda$. In the particular case of
generalized H\'enon map with dominated splitting in the Julia set $J$,
we characterize the hyperbolicity of $J$.
\end{abstract}

\tableofcontents

\section{Introduction}\label{sec:intro}

In the theory of complex dynamical systems, a well known seminal area
is the study of rational maps on the Riemann sphere. For complex
dynamics in several variables, the study of polynomial automorphisms
of $\C^2$ is the first step for {\it a global understanding of
holomorphics dynamic in higher dimension.}


One of the first results in this direction, were given by Friedland
and Milnor in \cite{fm}. They proved that for polynomial
automorphism in $\Ctwo$, the only systems (module conjugation by a
polynomial automorphism) that exhibit rich dynamics are the so
called generalized H\'enon maps (or by simplicity, complex H\'enon
maps).

Such a maps are obtained as a finite composition of
maps of the form $(y,p(y)-bx)$, where $p$ is a polynomial of degree at
least two, and $b \in \Bbb{C}^*$. Complex H\'{e}non maps have been a
subject of serious study, with foundational work done in the early
1990's by Hubbard \cite{h}, Hubbard and Oberste-Vorth
\cite{ho1,ho2}, Bedford and Smillie
\cite{bs1,bs2,bs3}, Bedford, Lyubich and Smillie \cite{bls1} and
Forn\ae{}ss and Sibony \cite{fs1}.

As in the case of rational maps, complex H\'enon map have well
defined a Julia set $J$ (see \cite{bs1}). This set is a
compact invariant set, and it contains the supports of the unique
measure of maximal entropy. Such measure exists due to the works of
Bedford and Smillie \cite{bs1}. Denote by $J^*$  the support of the
measure of maximal entropy.

A significant open question in the study of complex H\'{e}non maps is
whether $J = J^*$. Bedford and Smillie \cite{bs1} have shown that if
$f$ is uniformly hyperbolic on $J$, then $J=J^*$. Moreover,
Forn\ae{}ss proved that if $f$ is uniformly hyperbolic on $J^*$ and
$f$ is not volume preserving, then $J=J^*$ \cite{F}. In the setting
of complex H\'enon maps, hyperbolicity is the natural
generalization of the expansiveness on the Julia set for rational
maps on $\Cb$.

Motivated by the results above, in this work we establish equivalent
conditions to hyperbolicity for biholomorfisms of $\C\sp n$ with a
compact invariant set $\Lambda\subset\C\sp n$, under the hyphothesis
of partial hyperbolicity. It is well known that for a partial
hyperbolic map, we have strong stable manifolds and center-unstable
leaves \cite{HPS, shub}. So we can to characterize the uniform
hyperbolicity in terms of the behavior along the center-unstable
leaves.

More precisely, we say that the a local  $cu$-leaf
$\www{cu}{x}{\e}$ is {\it dynamically defined},
if for every $0<\e\ll1$, it is included in the local
unstable set of $x$. We say that the $cu$-leaves are {\it forward
expansive} if there exist a uniform constant $c>0$ such that for every
$x\in\Lambda$, and any $y\in\www{cu}{x}{\varepsilon}$, there exists
$n\in\N$, such that \[\d(f\sp n(y),f\sp n(x))> c.\]

The reader can find a more precise statement of the notions of
partial hyperbolicity, forward expansiveness and dynamically defined
in Section~\ref{preliminaries}. Our main theorem is the following.

\begin{teoa}\label{mainteoa} Let $f:\C\sp n\rightarrow\C\sp n$,
$n\geq 2$, be a biholomorphism which is partially hyperbolic on the
compact invariant set $\Lambda$. Then the following statements are
equivalent:
\begin{enumerate}
\item The function $f$ is uniformly hyperbolic on
$\Lambda$.
\item The $cu$-leaves are forward expansive.
\item The $cu$-leaves are dynamically defined.
\end{enumerate}
\end{teoa}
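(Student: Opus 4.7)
The natural strategy is to prove the cycle $(1)\Rightarrow(3)\Rightarrow(2)\Rightarrow(1)$, where the first two implications are relatively classical and the last is the heart of the matter.

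For $(1)\Rightarrow(3)$: If $f$ is uniformly hyperbolic on $\Lambda$, then the dominated splitting $E^s\oplus E^{cu}$ is in fact a hyperbolic splitting $E^s\oplus E^u$, and the standard stable/unstable manifold theorem identifies the local $cu$-leaves with the local unstable manifolds, which are contained in the local unstable sets by construction. For $(3)\Rightarrow(2)$: if $W^{cu}_\varepsilon(x)$ lies in the local unstable set of $x$, then $f^{-n}(y)\to f^{-n}(x)$ for every $y\in W^{cu}_\varepsilon(x)$; coupling this with the partially hyperbolic contraction on $E^s$ gives a local product structure around each point of $\Lambda$, so if forward iterates of $x$ and $y$ remained within a small uniform distance $c$, the point $y$ would also lie on the local stable leaf of $x$ and hence equal $x$. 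Compactness of $\Lambda$ and continuity of the invariant foliations promote the constant to a uniform one, yielding forward expansion.

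The substantive implication is $(2)\Rightarrow(1)$, where I would argue by contradiction and exploit the holomorphic structure. Assume $f$ is not uniformly hyperbolic; combined with the dominated splitting and the assumption of partial hyperbolicity, the failure must be a failure of uniform expansion along $E^{cu}$. Invoking a Ma\~n\'e--Pliss type lemma, I can extract a sequence of points $x_k\in\Lambda$ together with unit vectors $v_k\in E^{cu}_{x_k}$ whose forward iterates $\|Df^n v_k\|$ grow too slowly. Because $E^{cu}$ is a complex distribution and the local $cu$-leaves are embedded complex submanifolds, each $v_k$ can be realized as the initial velocity of a holomorphic disk $\phi_k:\mathbb{D}\to W^{cu}_\varepsilon(x_k)\subset\mathbb{C}^n$. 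The plan is then to apply a Brody-type rescaling argument to suitable iterates $f^{n_k}\circ\phi_k$: if the derivatives of these disks at the origin did not blow up, one could extract in the limit a non-constant entire curve $\Psi:\mathbb{C}\to\mathbb{C}^n$ whose image is tangent to $E^{cu}$ along $\Lambda$ and whose $f$-orbit remains in a small neighborhood of $\Lambda$; projecting back onto an actual $cu$-leaf produces a point whose entire forward orbit stays within distance $c/2$ of the base point, contradicting (2).

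The main obstacle will be precisely this limiting step: one must control the geometry of the $cu$-leaves along the sequence so that the Brody limit genuinely lies in (or projects holomorphically onto) a $cu$-leaf of some $z\in\Lambda$, rather than escaping the invariant set or collapsing to a constant. Establishing that the limit disk is non-degenerate, that it sits in $W^{cu}_\varepsilon(z)$, and that forward orbits along it fail the expansion inequality requires combining the continuity of the invariant bundles on $\Lambda$, Cauchy estimates on the holomorphic parametrizations, and the domination coming from partial hyperbolicity to rule out drift into the strong stable direction. This is where the complex-analytic hypothesis is indispensable: in the purely $C^r$ real setting the analogous implication fails, and only the normal-family compactness of holomorphic disks makes the argument go through.
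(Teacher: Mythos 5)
Your cycle $(1)\Rightarrow(3)\Rightarrow(2)\Rightarrow(1)$ differs from the paper's $(1)\Rightarrow(2)\Rightarrow(3)\Rightarrow(1)$, and the difference is not cosmetic: it hides two genuine gaps. The first and most serious is in your $(2)\Rightarrow(1)$ step, where you take for granted that the local $cu$-leaves are embedded complex submanifolds so that each unit vector $v_k\in F(x_k)$ is the initial velocity of a holomorphic disk inside $W^{cu}_\varepsilon(x_k)$. For a partially hyperbolic (non-hyperbolic) system the Hadamard--Perron center-unstable leaves are only $C^1$, and a $C^1$ graph over $F(x)$ contains no non-constant holomorphic disks in general. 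Establishing that the $cu$-leaves are in fact holomorphic is the central technical content of the paper (Sections 4--5): one first shows that ``dynamically defined'' forces an overlapping property for the backward iterates of the leaves, and then that the graph transform operator preserves the subspace of holomorphic Lipschitz graphs, so the fixed point (the leaf) is a uniform limit of holomorphic graphs on a fixed polydisc. This uses the dynamically-defined hypothesis in an essential way, so your plan to go directly from forward expansiveness to hyperbolicity would in practice have to pass through $(2)\Rightarrow(3)$ anyway --- which the paper proves by a compactness/minimal-time contradiction argument you do not supply. Once holomorphy of the leaves is in hand, the paper's conclusion of $(3)\Rightarrow(1)$ is a short Schwarz-lemma computation with the Kobayashi metric on polydiscs ($f^{-1}$ maps the unit-size leaf into the half-size leaf of the preimage point, so $\|Df^{-n}|_{F}\|\leq C 2^{-n}$); your Brody rescaling scheme is both heavier and, as you acknowledge, unresolved at exactly the limiting step.

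The second gap is in your $(3)\Rightarrow(2)$ step. You argue that if the forward orbits of $x$ and $y\in W^{cu}_\varepsilon(x)$ stay within $c$ of each other then $y$ lies on the local stable leaf of $x$, hence $y=x$. Partial hyperbolicity gives no such local product structure: the stable leaves are characterized by forward $\rho$-asymptoticity with $\rho<\mu$, and a point whose forward orbit merely stays bounded near that of $x$ (for instance, a point in a neutral center direction) need not be forward asymptotic to $x$ at any such rate. Ruling out such points in the $cu$-leaf is precisely the uniform expansion along $F$ that you are trying to prove, so as written the step is circular. The paper sidesteps this by deriving $(2)$ from $(1)$, where expansiveness along unstable manifolds is standard.
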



One of the motivations of the previous Theorem appears in the
study of complex H\'enon map with dominated splitting. In the setting
of dissipative H\'enon maps, do\-mi\-na\-ted splitting in $J$ and
partial hy\-per\-bo\-li\-ci\-ty are equivalents
(see Proposition \ref{1aprop}). It is important to note that both,
partially hyperbolic and dominated splitting are two ways to
relax hyperbolicity.

Theorem A establishes that it is enough assume forward expansiveness
or the dynamical definition of the $cu$-leaf to guarantee
hyperbolicity.

We must remark that for real maps with dominated splitting in
manifolds, the condition $cu$-leaf dynamically defined is not enough
to conclude hyperbolicity. In order to conclude hyperbolicity it is
required to add the hyphothesis of Kupka-Smalle over the
diffeomorphisms (see \cite{ps1,ps2} for surfaces and codimension one
context respectively).

Nevertheless motivated by the works \cite{ps1, ps2}, we conjecture
that:

\begin{conj}
Generically (Kupka-Smalle) complex H\'enon maps with dominated
splitting are hyperbolics.
\end{conj}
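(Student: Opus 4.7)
The plan is to follow the strategy of Pujals--Sambarino~\cite{ps1,ps2}, using Theorem A to reduce the conjecture to a statement about the dynamics along $cu$-leaves, and then exploit the extra rigidity available because, for a complex H\'enon map, the $cu$-leaves are holomorphic curves rather than merely $C^2$ arcs.

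By Proposition~\ref{1aprop}, a dissipative complex H\'enon map with dominated splitting on $J$ is partially hyperbolic on $J$, so Theorem A applies. Hence it is enough to show that, under the Kupka--Smale hypothesis, the $cu$-leaves are dynamically defined. Assume for contradiction that they are not: there exist $x\in J$ and $y\in\www{cu}{x}{\e}\setminus\{x\}$ such that $f^n(y)$ remains in the local unstable set of $f^n(x)$ for every $n\ge 0$. Taking $\omega$-limits of the pair $(x,y)$ in a leaf-pair space and passing to a minimal component, one obtains a compact $f$-invariant set $K\subseteq J$ of ``non-expanding pairs'' supported over a recurrent base orbit. This recurrence is the raw material needed to produce a periodic object.

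Next, analyse the iteration of $f$ along the $cu$-leaves through points of $K$. Because these leaves are one-complex-dimensional disks, the restricted iterates $f^n|_{\www{cu}{\cdot}{\e}}$ form a sequence of holomorphic maps between disks; a normal-family/Koebe distortion argument, using the partial-hyperbolicity domination constants to control derivatives in complementary directions, should extract a holomorphic limit map on a disk whose induced first-return map fixes a point with multiplier on the unit circle. A closing argument performed inside the (algebraic, finite-dimensional) Kupka--Smale residual class of H\'enon maps then converts this ``virtual'' indifferent periodic leaf into an honest periodic point of $f$ having a central eigenvalue of modulus one, contradicting the Kupka--Smale assumption. Therefore the $cu$-leaves must be forward expansive and hyperbolicity follows from Theorem A.

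The principal obstacle is the second step. In the real surface case~\cite{ps1}, Pujals--Sambarino invoke Ma\~n\'e's theorem on $C^2$ maps of the interval and Denjoy-type rigidity to rule out wandering behaviour along central leaves; for holomorphic $cu$-disks one hopes to substitute tools from one-variable complex dynamics---Sullivan's no-wandering-domains theorem and the Fatou--Shishikura rigidity of indifferent cycles---but linking these intrinsic results to the \emph{ambient} $\C^n$-dynamics through the only H\"older holonomy of the $cu$-foliation is delicate, especially because the cu-leaves are a priori only immersed. A secondary difficulty is the closing step itself: since H\'enon maps form a finite-dimensional algebraic family one cannot perturb locally, and must instead set up and prove a parametric Kupka--Smale density statement inside that family strong enough to realise the limiting indifferent leaf as an actual periodic orbit of some nearby map. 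Resolving these two points is, in my view, the heart of the conjecture.
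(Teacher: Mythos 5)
This statement is posed in the paper as an open \emph{conjecture}: the paper supplies no proof of it, and indeed explicitly motivates it by noting that in the real setting the analogous result requires the full Pujals--Sambarino machinery. So there is no ``paper's own proof'' to compare against, and your text should be read as a research programme rather than a proof.

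As a proof it has genuine gaps, which you yourself identify. The reduction via Theorem A (it suffices to show the $cu$-leaves are dynamically defined) is sound, and the appeal to Proposition~\ref{1aprop} to get partial hyperbolicity from dominated splitting in the dissipative case is correct. But the two load-bearing steps are only gestured at. First, the passage from a ``non-expanding pair'' over a recurrent orbit to a holomorphic limit return map with an indifferent fixed point is not an argument: the $cu$-leaves are only known to be holomorphic \emph{after} one already knows they are dynamically defined (Theorem~\ref{teo.cueq}), so in the contradiction scenario you cannot yet invoke normal families or Koebe distortion on them --- they are a priori only $C^1$ graphs produced by the graph transform. This circularity is precisely what the paper's Section~5 is designed to break, and it only succeeds under the dynamically-defined hypothesis you are trying to establish. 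Second, the ``closing argument inside the Kupka--Smale residual class'' is not available: generalized H\'enon maps form a finite-dimensional algebraic family, so there is no local $C^1$ closing or perturbation lemma, and Buzzard's Kupka--Smale theorem for automorphisms of $\C^n$ is a genericity statement about the given map, not a tool for realising a virtual indifferent cycle in a nearby map and transporting the contradiction back. Until these two steps are supplied, the conjecture remains open, exactly as the paper states.
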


We study the relation between uniform hyperbolicity and forward
expansiveness  because in the H\'enon maps the saddle periodic points
(which are dense in $J^*$ \cite{bs1}) satisfy a non-uniform forward
expansivity condition. Moreover, for dissipative H\'enon maps with
dominated splitting, Proposition \ref{hol6} states that in every
center-unstable leaf there exist many points with (uniform) forward
expansivity.

Another motivation of Theorem A is the notion of {\it
quasi-expanding} due to Bedford and Smillie in \cite{bs8}. Roughly
speaking, quasi-expanding corresponds to an uniform forward
expansivity in the periodic saddle point respect to the complex
structure induced by the dynamics. The authors establish that a
(topological) expansive quasi-hyperbolic
map (quasi-expanding and quasi-contracting) is hyperbolic. Dissipative
complex H\'enon maps with dominated splitting are quasi-contracting.
We obtain the same conclusion of Bedford and Smillie just assuming
forward expansivity on the whole Julia set $J$.


The sketch of the proof of the Theorem A, is essentially the
following: firstly we establish the equivalence between forward
expansivity and dynamically defined. Once the $cu$-leaves are
dynamically defined, then they are holomorphics. In
consequence, they are uniques and the center-unstable direction  is
in fact a strong unstable direction.

The existence of a $cu$-leaf, follows from a classical argument using
the graph transform operator (see Theorem \ref{h_p_thm}). It is
possible to define the graph transform operator, in an appropriated
(complete and metric) space of Lipschitz maps. In such case
this operator is a contraction and the $cu$-leaf is the unique fixed
point. The $cu$-leaf given by the graph transform operator is only
$C\sp1$. To prove the holomorphy of the leaf it is necessary to prove
that we can approximate the $cu$-leaf by holomorphic Lipschitz map
(iterate of a holomorphic Lipschitz map by the graph transform
operator). Hence, knowing that the convergence in the space of
Lipschitz function is the convergence uniform on compact part, we
conclude the holomorphy of the $cu$-leaf. The delicate step is
guarantee the explained above, only using the dynamically defined
property.

Among the dissipative H\'enon maps, it is possible to obtain a more
refined equivalence to the hyperbolicity of the Julia set. Follows
from \cite{bls1}, that

\[J\sp*=\overline{\bigcup\bpl \supp(\nu)\,:\,\nu\, \textrm{ is $f$-
invariant}\,
\textrm{hyperbolic}\, \bpr}.\]
So  we can  define the set
\[J_0=\overline{\bigcup\bpl \supp(\nu)\,:\,\nu\, \textrm{ is $f$-
invariant}\,
\textrm{and has a zero exponent}\, \bpr}.\]
Note that by definition, $J_0$ is a compact $f$-invariant set.

\begin{teob}\label{mainteob}
Let $f:\Ctwo\rightarrow\Ctwo$ be a dissipative complex H\'enon map,
with dominated spli\-tting in $J$. The following statement are
equivalents:
\begin{enumerate}
\item $J$ is uniformly hyperbolic,
\item $J_0=\emptyset$,
\item The set of periodic (saddle) points is uniformly
hyperbolic.
\item The set of periodic (saddle) points is uniformly expanding
at the period.
\end{enumerate}
\end{teob}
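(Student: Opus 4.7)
The implications $(1)\Rightarrow(3)\Rightarrow(4)$ and $(1)\Rightarrow(2)$ are immediate from the definitions: uniform hyperbolicity on $J$ restricts to the closed subset of saddle periodic points and in particular gives uniform expansion at the period, while under uniform hyperbolicity every $f$-invariant ergodic probability measure on $J$ has its Lyapunov exponents determined by the hyperbolic splitting, so none of them vanishes and $J_0=\emptyset$. It remains to establish the two reverse implications $(4)\Rightarrow(1)$ and $(2)\Rightarrow(1)$.

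For $(4)\Rightarrow(1)$ the plan is to exploit the density of saddle periodic points in $J^*$ (from \cite{bs1}). Condition $(4)$ provides a uniform $\lambda>1$ with $\|Df^{p}(q)|_{E^{cu}_q}\|\ge\lambda^{p}$ for every saddle periodic $q$ of period $p$. Using continuity of the bundle $E^{cu}$, compactness of $J^*$ and the domination estimates supplied by Proposition~\ref{1aprop}, I would propagate this asymptotic expansion rate to all of $J^*$, obtaining a uniform asymptotic expansion of $Df$ along $E^{cu}$ on $J^*$. Together with the uniform contraction on $E^s$ coming from partial hyperbolicity, this upgrades to uniform hyperbolicity of $f$ on $J^*$. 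Since $f$ is dissipative it is not volume-preserving, so Forn\ae{}ss' theorem \cite{F} gives $J=J^*$, and $J$ itself is uniformly hyperbolic.

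For $(2)\Rightarrow(1)$ the route is through Theorem~A: by Proposition~\ref{1aprop} we are in the partially hyperbolic setting, so it suffices to verify that the $cu$-leaves are forward expansive. I would argue by contradiction: if forward expansivity fails, then for each $k$ one can pick $x_k\in J$ and $y_k\in\www{cu}{x_k}{\e}$ with $\d(f^n(x_k),f^n(y_k))\le 1/k$ for every $n\ge 0$. Because $E^{cu}$ has complex dimension one, the separation along each $cu$-leaf is controlled by a single scalar derivative $Df|_{E^{cu}}$, and the persistence of $f^n(y_k)$ near $f^n(x_k)$ forces sub-exponential growth of these derivatives along the approximating orbits. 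Taking weak-$\ast$ limits of the empirical measures on $\{f^n(x_k)\}$, I would then obtain an $f$-invariant probability measure on $J$ with vanishing Lyapunov exponent in the center-unstable direction, contradicting $J_0=\emptyset$.

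The main obstacle is the Ma\~n\'e-type step in $(2)\Rightarrow(1)$: extracting an invariant measure with zero $cu$-exponent from the failure of forward expansivity requires quantitative control of $\log\|Df|_{E^{cu}}\|$ along the approximating orbit pairs, for which I expect to need a Pliss-type selection of hyperbolic times adapted to the dominated-splitting setting, in the spirit of the real-surface arguments of Ma\~n\'e and Pujals--Sambarino.
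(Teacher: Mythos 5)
Your easy implications are fine, but the step you treat as routine in $(4)\Rightarrow(1)$ is the actual crux of the theorem, and your plan for it would not work as written. Uniform expansion at the period over the saddle periodic points does \emph{not} propagate to uniform expansion on $J^*$ by ``continuity of the bundle, compactness and domination'': expansion at the period is an average over one period of each individual orbit, and density of $\per$ in $J^*$ gives no control over Birkhoff averages of $\log\|Df|_F\|$ along non-periodic orbits. This passage is precisely the content of Theorem 2.1 of Ma\~n\'e's $C^1$ stability conjecture paper (equivalently, the Bonatti--Gan--Yang theorem on homoclinic classes quoted in Section 8), and that is what the paper uses. The paper's logic is the contrapositive of yours: if $J^*$ is \emph{not} hyperbolic, Ma\~n\'e's theorem produces saddle points $p_n$ of arbitrarily large period $N(n)$ with $\gamma^{N(n)}\leq\prod_j\|(Df)^{-1}|_{F(f^j(p_n))}\|<1$; the empirical measures along these orbits converge to an invariant measure with $\int\log\|Df|_F\|\,d\nu=0$, which simultaneously violates (4) and forces $J_0\cap J^*\neq\emptyset$, hence $\neg(2)$. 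Note also that applying Ma\~n\'e's theorem is not free: one must verify its hypothesis that $\liminf_n\frac1n\sum_j\log\|(Df)^{-1}|_{F(f^j(x))}\|<-c$ on a dense subset, which the paper gets from the Bedford--Lyubich--Smillie estimate $\lambda^+(p)\geq\log d$ at saddle points together with their density in $J^*$. Your proposal contains no substitute for this input. Finally, since the dichotomy lives on $J^*$, Forn\ae{}ss' theorem is needed at the end to pass from hyperbolicity of $J^*$ to hyperbolicity of $J$; you invoke it correctly.

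Your route for $(2)\Rightarrow(1)$ is genuinely different from the paper's: you want to verify $cu$-forward expansivity and then quote Theorem A, whereas the paper never uses Theorem A here --- it runs $(2)\Rightarrow(1)$ through the same Ma\~n\'e dichotomy plus Corollary \ref{cor_jc} and Forn\ae{}ss. Your route is attractive in principle (it would make Theorem B a genuine corollary of Theorem A), but the step you yourself flag as ``the main obstacle'' is a real gap, not a technicality: from $\d(f^n(x_k),f^n(y_k))\leq 1/k$ for all $n$ you need sub-exponential growth of $\prod_j\|Df|_{F(f^j(x_k))}\|$, and this requires bounded distortion of $Df$ along the $cu$-leaves; but at this stage of the argument the leaves are only known to be $C^1$ (their holomorphy, and hence any Koebe-type distortion control, is only established \emph{after} one knows they are dynamically defined). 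A Pliss/Pujals--Sambarino selection argument might close this, but it is exactly the kind of work the paper avoids by going through Ma\~n\'e's theorem instead. As it stands, both reverse implications in your plan rest on unproved steps of comparable depth to the theorem itself.
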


Statements 3 and 4 in the theorem follows from $J^*$ be an homoclinic
class. For a precise statement of uniform expansion at the period see
Definition~\ref{defi:7}. An immediate Corollary from the
Theorem~
B is the following result.

\begin{corb}
Let $f$ be a dissipative complex H\'enon map, with dominated
spli\-tting in $J$. Then $J$ is hyperbolic if, and only if, every
$f$-invariant measure supported in $J$ is hyperbolic.
\end{corb}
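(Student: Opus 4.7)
The plan is to deduce Corollary C directly from Theorem B, and in particular from the equivalence $(1)\Leftrightarrow(2)$. There are two implications to establish, and neither should require any new dynamical input beyond what Theorem B already provides together with standard facts about Lyapunov exponents on uniformly hyperbolic sets.

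For the implication \emph{$J$ hyperbolic $\Rightarrow$ every $f$-invariant measure supported in $J$ is hyperbolic}, I would argue as follows. Uniform hyperbolicity on $J$ gives a continuous $Df$-invariant splitting $E^s\oplus E^u$ with uniform constants controlling the contraction on $E^s$ and the expansion on $E^u$. Therefore, for any $f$-invariant probability measure $\mu$ supported in $J$, Oseledec's theorem applied $\mu$-a.e.\ yields Lyapunov exponents that coincide with the asymptotic growth rates of vectors in $E^s$ and $E^u$; these rates are bounded away from zero by the uniform hyperbolicity constants, so $\mu$ has no zero exponent and is hyperbolic.

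For the converse implication \emph{every $f$-invariant measure supported in $J$ is hyperbolic $\Rightarrow$ $J$ hyperbolic}, I would invoke the definition of $J_0$ supplied in the excerpt. By hypothesis there is no $f$-invariant measure supported in $J$ with a zero Lyapunov exponent, so the union appearing inside the closure that defines $J_0$ is empty; hence $J_0=\emptyset$. Statement (2) of Theorem B is therefore satisfied, and Theorem B's equivalence $(2)\Rightarrow(1)$ yields that $J$ is uniformly hyperbolic.

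There is no real obstacle here: the corollary is essentially a translation of condition (2) of Theorem B into measure-theoretic language. The only point deserving attention is to be sure that ``hyperbolic measure'' is taken to mean \emph{all Lyapunov exponents are non-zero} (which, in the presence of the dominated splitting on $J$ and of dissipativity, is the natural notion), so that the equality ``no invariant measure with a zero exponent'' is exactly the definition of $J_0=\emptyset$. Once this convention is fixed, the two implications above close the argument.
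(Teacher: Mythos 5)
Your argument is correct and matches the paper's intended (and unwritten, since the corollary is stated as ``immediate'') route: the equivalence $(1)\Leftrightarrow(2)$ of Theorem~B plus the observation that $J_0=\emptyset$ exactly when no invariant measure has a zero exponent, with the forward direction handled by the standard Oseledets/uniform-hyperbolicity bound. Note only that in the paper's convention ``hyperbolic'' for a measure means $\lambda^+>0$ a.e.\ (dissipativity already forces $\lambda^-\leq\log b<0$), which is consistent with the convention you fix at the end.
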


The paper is organized as follows. In Section \ref{preliminaries} we
state some results and tools related with partially hyperbolicity,
and we define the notions of forward expansivity and dynamically
defined. Also we state the existence of stable/center-unstable
manifold for partially hyperbolic systems. In Section \ref{section3},
we present the Theorem of existence of stable/center-unstable
manifolds in the holomorphic context. From Section \ref{section4}
until \ref{section:7}, we present the proof of the Theorem A. Finally
in Section \ref{section8}, we present some formalisms for complex
H\'enon maps and prove of Theorem B, and another.
\newline

\noindent
{\it Acknowledgements:}
This article is a part of my PhD Thesis under the direction of
Enrique Pujals at IMPA. I would like to thank him for his
guidance.

\section{Preliminaries}\label{preliminaries}

In this section we recall several classic results of dynamical
systems, and we write them in the context of complex and holomorphic
dynamics in $\C\sp n$ for any $n\geq2$.

We define the open polydisc of center $0$ and radio $r>0$ in $\C\sp
k$ as the set
\[\Delta_k(0,r)=\bpl z\in\C\sp k\, : \, |z_i|< r,\, \textrm{
for every }\, i=1,\ldots,k\bpr.\]

We recall the notion of partially hyperbolic (see for references
\cite{pesin} or \cite{hds1b}).

\begin{defi}\label{part_hip}
Let $f:\C\sp n\rightarrow\C\sp n$ be a biholomorphism and
$\Lambda\subset\C\sp n$ denote be a compact $f$-invariant  set. We
say that $f$ is partially hyperbolic (in the broad sense) on
$\Lambda$, if there exist a $Df$-invariant splitting $T_\Lambda\C\sp
n=E\oplus F$, and constant $0<\lambda<\mu$ and $C>0$  such that,
\begin{itemize}
\item[1.]
$||Df\sp n(x)|_{E(x)}||\leq C\lambda\sp n$ for all $n\geq0$,
\item[2.] $||Df\sp{-n}(x)|_F(x)||\leq C\mu\sp{-n}$ for
all $n\geq0$.
\end{itemize}

\end{defi}

Clearly, either $\lambda<1$ and/or $\mu>1$ and without lost of
generality in what follows we assume that $\lambda<1$. In this case,
the subspace $E(x)$ is stable and it will denoted by $E\sp s(x)$. Also
we denote by $l$ the complex dimension of the space $E(x)\sp s$ and by
$k$ the complex dimension of the space $F(x)$.


Denote by $Emb\sp r(\Delta_k(0,1),\C\sp n)$ the set of
$C\sp r$-embeddings of $\Delta_k(0,1)$ on $\C\sp n$. Two point $x,\,
y\in \C\sp n$ are {\it forward $\rho$-asymptotic} under $f$, if
$d(f\sp n(x),f\sp n(y))\leq C\rho\sp n$ for all $n\geq 0$ and some
constant $C>0$. Similarly, we define {\it backward $\rho$-asymptotic}
as forward $\rho$-asymptotic for $f\sp{-1}$.

Recall by \cite{HPS} (see also \cite{shub}) that a partially
hyperbolic systems to admit the existence of stable/center-unstable
manifolds.

\begin{teo}\label{hol0}
Let $f$ be a biholomorphism in $\C\sp n$, such that $f$ is
partially hyperbolic on $\Lambda$ with splitting $T_\Lambda\C\sp
n=E\sp s\oplus F$. Then there exist two continuous functions
$\phi\sp{s}:\Lambda\rightarrow
Emb\sp\infty(\Delta_k(0,1),\C\sp n)$ and
$\phi\sp{cu}:\Lambda\rightarrow
Emb\sp1(\Delta_l(0,1),\C\sp n)$ such that, with
$\www{s}{x}{\e}=\phi\sp{s}(x)\Delta_k(0,\e)$ and
$\www{cu}{x}{\e}=\phi\sp{cu}(x)\Delta_l(0,\e)$, the fo\-llo\-wing
properties hold:
\begin{itemize}
\item[a)] $T_x\www{s}{x}{\e}=E\sp s(x)$ and
$T_x\www{cu}{x}{\e}=F(x)$,
\item[b)] for all $0<\e_1<1$ there exist $\e_2$ such that
\[f(\www{s}{x}{\e_2})\subset \www{s}{f(x)}{\e_1}\]and
\[f\sp{-1}(\www{cu}{x}{\e_2})\subset \www{cu}{f\sp{-1}(x)}{\e_1}.\]
\end{itemize}
Then sets $\www{s}{x}{1}$ with $x\in\Lambda$ are submanifolds
of $\C\sp n$, and are characterized as those points locally forward
$\rho$-asymptotic with $x$, for some $\lambda\leq\rho<\mu$.
\end{teo}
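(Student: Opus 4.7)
The plan is to apply the classical Hadamard--Perron construction from \cite{HPS, shub}, verifying that the partially hyperbolic hypothesis of Definition~\ref{part_hip} is exactly what is needed to run the graph transform method. First I would build a continuous family of biholomorphic charts $\chi_x:\Delta_l(0,r)\times\Delta_k(0,r)\to\C\sp n$ with $\chi_x(0)=x$, $D\chi_x(0)(\C\sp l\times\{0\})=E\sp s(x)$ and $D\chi_x(0)(\{0\}\times\C\sp k)=F(x)$, depending continuously on $x\in\Lambda$; such a family exists by continuity of the splitting and the local triviality of $T\C\sp n$. In these coordinates the conjugated map $\tilde f_x=\chi_{f(x)}\sp{-1}\circ f\circ\chi_x$ has block-diagonal linear part at the origin whose stable factor has norm at most $\lambda$ and whose unstable factor has inverse of norm at most $\mu\sp{-1}$. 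After replacing the ambient norm by an adapted one and shrinking $r$, the nonlinear remainder of $\tilde f_x$ has arbitrarily small Lipschitz constant, uniformly in $x\in\Lambda$.

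With these adapted coordinates in hand I would run two graph transforms. For the stable leaves let $\Sigma\sp s$ be the complete metric space of families $(\sigma_x)_{x\in\Lambda}$ of Lipschitz maps from the unit stable polydisc to the unit unstable polydisc, with $\sigma_x(0)=0$ and a common small Lipschitz constant, endowed with the sup-sup metric. Pulling graphs back by $\tilde f_x$ defines an operator $\mathcal T\sp s$ on $\Sigma\sp s$, and the bound $\|D\tilde f_x|_{E\sp s}\|\le\lambda<1$ together with the smallness of the nonlinear part makes $\mathcal T\sp s$ a uniform contraction; its unique fixed point is the family $\phi\sp s$. For the center-unstable leaves I would perform the analogous construction for $\tilde f_x\sp{-1}$, on sections from the unit unstable polydisc to the unit stable polydisc; now the contraction rate is of order $\lambda/\mu<1$, which is exactly the content of the domination inequality $\lambda<\mu$, and the fixed point gives $\phi\sp{cu}$. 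Property (b) follows directly from the fixed-point equation together with the contraction estimates $\|Df|_{E\sp s}\|\le\lambda$ and $\|Df\sp{-1}|_F\|\le\mu\sp{-1}$, which allow one to choose $\e_2$ small enough in terms of any prescribed $\e_1$.

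For the tangency statements in (a), the derivative $\sigma_x'(0)$ satisfies a linearized graph-transform equation of the form $\sigma_x'(0)=A_u\sp{-1}\sigma_{f(x)}'(0)A_s$; iterating and using $\|A_s\|\le\lambda$, $\|A_u\sp{-1}\|\le\mu\sp{-1}$ forces $\sigma_x'(0)=0$, so the tangent space of each leaf at $x$ is the prescribed subspace. Higher regularity of $\phi\sp s$ follows from the Hirsch--Pugh--Shub fibre contraction principle applied inductively to the bundles of $j$-jets above the fixed graph: since $E\sp s$ contracts faster than any bounded factor along $F$, every jet-level graph transform is a contraction and one obtains $C\sp\infty$. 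For $\phi\sp{cu}$ the single gap $\lambda<\mu$ only contracts the first jet transform, yielding $C\sp 1$; no estimate $\mu>\lambda\sp j$ for $j\ge 2$ is available to bootstrap further. Finally, the characterization of $W\sp s_1(x)$ as the set of locally forward $\rho$-asymptotic points follows in one direction from forward invariance and the exponential contraction of $W\sp s$ by a factor $\lambda$, and in the other direction from the fact that any point with a nontrivial $F$-component separates from $x$ under forward iteration at rate at least $\mu$, incompatible with $\rho$-asymptoticity for $\rho<\mu$. The main obstacle throughout is ensuring uniformity of the graph-transform constants over $x\in\Lambda$, which rests on compactness of $\Lambda$ and continuity of the splitting.
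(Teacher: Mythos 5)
Your proposal is correct and follows essentially the same route as the paper, which likewise derives Theorem \ref{hol0} from the Hadamard--Perron/graph-transform machinery of \cite{HPS,KH,shub}: localize $f$ in adapted charts aligned with the splitting $E\sp s\oplus F$, run the graph transform for $f$ on stable sections and for $f\sp{-1}$ on center-unstable sections, and read off (a), (b), the $C\sp\infty$ versus $C\sp1$ regularity, and the $\rho$-asymptotic characterization from the contraction estimates. The only cosmetic difference is that the paper organizes the construction as a nonautonomous family $\{f_m\}$ along each orbit rather than as a single operator on families indexed by $x\in\Lambda$.
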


The sets $\www{s}{x}{1}$ are the local stable manifolds in the point
$x\in\Lambda$. We name the sets $\www{cu}{x}{\e}$, the center-unstable
leaf or $cu$-leaf. Clearly in the case $\mu>1$, the subspace $F(x)$ is
unstable and the $cu$-leaf are unstable manifolds.

In the holomorphic context we can say even more about the stable
manifold.

\begin{teo}\label{hol1}
Let $f$ be as in the Theorem \ref{hol0}. Then the local stable
manifolds $\bpl\www{s}{x}{1}\bpr_{x\in\Lambda}$ are holomorphic
submanifolds of $\C\sp n$.
\end{teo}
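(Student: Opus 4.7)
The plan is to revisit the graph-transform proof of Theorem~\ref{hol0}, keeping track at each step of the holomorphicity of the objects produced. Fix $x\in\Lambda$ and, along the forward orbit $x_j=f^j(x)$ for $j\ge 0$, choose biholomorphic charts $\varphi_j$ centered at $x_j$ which send the partially hyperbolic splitting $E^s(x_j)\oplus F(x_j)$ to the standard product $\C^l\times\C^k$; after adjusting metrics and shrinking these charts one obtains representations $f_j=\varphi_{j+1}^{-1}\circ f\circ \varphi_j$ that are biholomorphic perturbations of a block-linear map $A_j\oplus B_j$ with $\|A_j\|\le\lambda'<1$ and $\|B_j^{-1}\|\le 1/\mu'$ for some constants $\lambda'<\mu'$. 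This is the classical setup in which Theorem~\ref{h_p_thm} already produces the $C^\infty$ local stable manifold.

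In this framework the graph-transform operator $\Gamma$ acts on the complete metric space $\mathcal{E}$ of sequences $(\sigma_j)_{j\ge0}$ of Lipschitz maps $\sigma_j:\Delta_l(0,1)\to\Delta_k(0,1)$ with $\sigma_j(0)=0$ and Lipschitz constant at most a small $\alpha>0$. I define $\Gamma(\sigma)_j$ to be the map whose graph is the connected component through the origin of $f_j^{-1}(\graph\sigma_{j+1})\cap(\Delta_l(0,1)\times\Delta_k(0,1))$. Because $f_j^{-1}$ expands the $E^s$-direction by roughly $1/\lambda'$ and contracts the $F$-direction by roughly $1/\mu'$, a standard implicit-function estimate shows that this set is indeed the graph of a Lipschitz map defined on all of $\Delta_l(0,1)$, that $\Gamma$ maps $\mathcal{E}$ into itself, and that $\Gamma$ is a contraction whose unique fixed point $(\sigma^\ast_j)$ parametrizes the stable manifolds via $\www{s}{x}{\e}=\varphi_0(\graph\sigma^\ast_0|_{\Delta_l(0,\e)})$.

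The decisive observation is that $\Gamma$ preserves holomorphicity. If every $\sigma_{j+1}$ is holomorphic then $\graph\sigma_{j+1}$ is a holomorphic submanifold; the biholomorphism $f_j^{-1}$ carries it to another holomorphic submanifold, and the holomorphic implicit function theorem (together with the Lipschitz bounds just described, which yield a global domain of definition) represents this submanifold, over its projection to $\Delta_l(0,1)$, as the graph of a holomorphic map. Thus $\Gamma$ restricts to the nonempty subset $\mathcal{E}_{\mathrm{hol}}\subset\mathcal{E}$ of sequences of holomorphic Lipschitz maps, and this subset is closed in $\mathcal{E}$ by the Weierstrass theorem on uniform limits of holomorphic functions.

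Now start the Banach fixed-point iteration from the zero sequence $\sigma^{(0)}_j\equiv 0$, which trivially lies in $\mathcal{E}_{\mathrm{hol}}$; then every iterate $\Gamma^m(\sigma^{(0)})$ again lies in $\mathcal{E}_{\mathrm{hol}}$, and by the contraction property these iterates converge uniformly on $\Delta_l(0,1)$ to $(\sigma^\ast_j)$. A final application of Weierstrass yields that each $\sigma^\ast_j$ is holomorphic, so $\www{s}{x}{1}=\varphi_0(\graph\sigma^\ast_0)$ is a holomorphic submanifold of $\C^n$. The main technical step I foresee is choosing the charts $\varphi_j$ and the constant $\alpha$ uniformly in $j$ so that $\Gamma$ really does send holomorphic sections defined on the full polydisc $\Delta_l(0,1)$ back to sections on the same full polydisc, rather than on shrinking ones---this is the substance of the holomorphic graph transform to be established in Section~\ref{section3}. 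Notably, no extra dynamical hypothesis beyond partial hyperbolicity is needed here, which is what makes the stable case cleaner than the $cu$-leaf case that occupies the bulk of the paper.
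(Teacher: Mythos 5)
Your proposal is correct and follows essentially the same route as the paper: the paper proves Theorem \ref{hol1} by applying Proposition \ref{pr.holm} to the Hadamard--Perron family built from $f^{-1}$, for which the stable direction becomes the genuinely expanding base direction (so $\mu>1$ holds for that family), whence the graph transform preserves holomorphic sections on a fixed polydisc (the inequality $\|G^m_\varphi(x)\|\ge\mu_0\|x\|$ with $\mu_0>1$) and the fixed point is holomorphic as a uniform limit of holomorphic graphs. The only imprecision is your phrase that $f_j^{-1}$ ``contracts'' the $F$-direction --- when $\mu'\le1$ it need not, but only the domination $\lambda'<\mu'$ together with the expansion of the base is actually used, so the argument is unaffected.
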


This Theorem is part of the folklore and we prove them in the
following section. In his proof is introduced an important technique,
that we use later in the proof of Proposition \ref{ws_holm}.

To end this section, we recall some basic definition. The {\it
unstable set} of a point $x$ for $f$, is the set
\[W\sp u(x)=\bpl y\in\C\sp n\, :\,
d(f\sp{-n}(x),f\sp{-n}(y))\rightarrow 0,\, \textrm{ when }\,
n\rightarrow \infty \bpr,\]
where $d$ is the Euclidean distance. Similarly, the {\it local
unstable set} of size $\e$ is the set
\[W\sp u_\e(x)=\bpl y\in W\sp u(x)\, :\,
d(f\sp{-n}(x),f\sp{-n}(y))\leq\e,\, \textrm{ for every }
n\geq0 \bpr.\]

It is know that for every $0<\e\leq1$ there exist $\delta>0$
such that for every $x\in\Lambda$,
$\www{u}{x}{\delta}\subseteq\www{cu}{x}{\e}$,
however in general the opposite inclusion not hold if we not have
good properties in the asymptotic behavior of $Df$.

We end this section, recalling the definition of $cu$-leaves
dynamically defined and the notion $cu$-forward expansivity for a
biholomorphism $f$ in $\C\sp n$, that is partially hyperbolic on
$\Lambda$.

\begin{defi}
We say that the $cu$-leaves are dynamically defined, if for
every\linebreak $0<\e\ll1$, $\www{cu}{x}{\e}\subset \www{u}{x}{loc}$
for all $x\in \Lambda$.
\end{defi}

\begin{defi}
We say that $f$ is forward expansive in the center-unstable leaves or
$cu$-forward expansive, if there exist a uniform constant $c>0$ such
that for\linebreak every $x\in\Lambda$, and any
$y\in\www{cu}{x}{\varepsilon}$, there exists $n\in\N$, such that
\[\d(f\sp n(y),f\sp n(x))> c.\] We say that the constant $c$ is the
expansiveness constant.
\end{defi}

\section{Holomorphic Hadamard-Perron Theorem}\label{section3}

A way to see the proof of the Theorem \ref{hol0}, is applying
the classical Hadamard-Perron Theorem. We will use the notation and
the ``technique'' of this Theorem, to prove many of the statement in
the following sections, and use the version of this theorem stated in
the book \cite{KH}. In this section we explain and present a sketch of
the proof of this Theorem and prove the Theorem \ref{hol1}.

\begin{teo}[{\bf Hadamard-Perron Theorem}]\label{h_p_thm} Let
$\lambda<\mu$, $r\geq1$ and for each $m\in\Z$ let
$f_m:\C^n\rightarrow\C^n$ be a $C^r$
diffeomorphisms such that for $(x,y)\in\C^l\oplus\C^{k}$,
\[f_m(x,y)=(A_mx+\alpha_m(x,y),B_my+\beta_m(x,y)),\]
for some linear maps $A_m:\C^l\rightarrow\C^l$ and
$B_m:\C^{k}\rightarrow\C^{k}$ with $||A_m^{-1}||\leq\mu^{-1}$, and
$||B_m||\leq\lambda$ and $\alpha_m(0)=0$, $\beta_m(0)=0$.

Then for $0<\gamma<\min(1,\sqrt{\mu/\lambda}-1)$ and
\begin{equation}\label{cota-delta}
0<\delta<\min\left(\frac{\mu-\lambda}{\gamma+2+\gamma\sp{-1}},
\frac{\mu-(1+\gamma)\sp2\lambda}{(1+\gamma)(\gamma\sp2+2\gamma+2)}
\right)
\end{equation}
we have the following property: If $||\alpha_m||_{C^1}<\delta$ and
$||\beta_m||_{C^1}<\delta$ for all $m\in \Z$ then there is
\begin{itemize}
\item[(1)] a unique family $\{W^+_m\}_{m\in\Z}$ of
$l$-dimensional $C^1$ manifolds
\[W^+_m=\{(x,\varphi^+_m(x))\, :\, x\in\C^l\}=\graph\varphi^+_m\] and
\item[(2)] a unique family $\{W^-_m\}_{m\in\Z}$ of
$k$-dimensional $C^1$ manifolds
\[W^-_m=\{(\varphi^-_m(y),y)\, :\, y\in\C^{n-l}\}=\graph\varphi^-_m,\]
\end{itemize}
where $\varphi^+_m:\C^l\rightarrow\C^{k}$,
$\varphi^-_m:\C^{k}\rightarrow\C^{l}$,
$\sup_{m\in\Z}||D\varphi^\pm_m||<\gamma$, and the following properties
holds:
\begin{itemize}
\item[(i)] $f_m(W^-_m)=W^-_{m+1}$, $f_m(W^+_m)=W^+_{m+1}$.
\item[(ii)] The inequalities
\[||f_m(z)||<\lambda'||z|| \textrm{ for } z\in
W^-_m,\]and\[||f^{-1}_{m-1}(z)||<\mu'||z|| \textrm{ for } z\in
W^+_m\] hold, where
$\lambda'=(1+\gamma)(\lambda+\delta(1+\gamma))<\frac{\mu}{1+\gamma}
-\delta=\mu'$.
\item[(iii)] Let $\lambda'<\nu<\mu'$. If
$||f_{m+j-1}\circ\cdots\circ f_m(z)||<C\nu^j$ for all $j\geq0$ and
some $C>0$ then $z\in W^-_m$.\newline
Similarly, if $||f_{m-j}^{-1}\circ\cdots\circ
f_{m-1}^{-1}(z)||<C\nu^{-j}$ for all $j\geq0$ and some $C>0$ then
$z\in W^+_m$.
\end{itemize}
Finally, in the hyperbolic case $\lambda<1<\mu$ the families
$\{W^+_m\}_{m\in\Z}$ and $\{W^-_m\}_{m\in\Z}$ consist of $C^r$
manifolds.
\end{teo}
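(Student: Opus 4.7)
The plan is to prove both families $\{W^\pm_m\}$ by the classical graph transform method, carrying out the construction of $\{W^+_m\}$ (graphs over $\C^l$, the expanding direction) explicitly; the construction of $\{W^-_m\}$ is symmetric after replacing the data by the inverse cocycle and swapping the roles of $\C^l$ and $\C^k$.

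First I would set up the complete metric space $\mathcal{X}_\gamma$ of bi-infinite sequences $\varphi=\{\varphi_m\}_{m\in\Z}$ of maps $\varphi_m:\C^l\to\C^{k}$ with $\varphi_m(0)=0$ and $\mathrm{Lip}(\varphi_m)\leq \gamma$, under the uniform metric. The Lipschitz bound $\gamma$ plays the role of an invariant cone condition. Given $\varphi\in\mathcal{X}_\gamma$, decompose $f_m(x,\varphi_m(x))=(u_m(x),v_m(x))$ with $u_m(x)=A_m x+\alpha_m(x,\varphi_m(x))$. The expansion $\|A_m^{-1}\|\leq \mu^{-1}$ together with $\|\alpha_m\|_{C^1}<\delta$ and the cone bound $\gamma$ make $u_m$ a bi-Lipschitz self-homeomorphism of $\C^l$ (here the first factor in the bound on $\delta$ in (\ref{cota-delta}) is used), so I define the graph transform $(\mathcal{T}\varphi)_{m+1}(\tilde x):=v_m(u_m^{-1}(\tilde x))$.

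The three key estimates are then: (a) $\mathrm{Lip}((\mathcal{T}\varphi)_{m+1})\leq \gamma$ (invariance of the cone), (b) $\mathcal{T}$ is a contraction on $\mathcal{X}_\gamma$ in the uniform metric, and (c) the one-step contraction/expansion rates on the fixed graph match $\lambda'$ and $\mu'$ in the statement. Both (a) and (b) reduce to explicit rational expressions in $\lambda,\mu,\delta,\gamma$ that become $\leq \gamma$ and $<1$ respectively precisely under the two bounds on $\delta$ encoded in (\ref{cota-delta}). The Banach fixed-point theorem then produces a unique $\varphi^+=\{\varphi^+_m\}$, and $W^+_m:=\graph(\varphi^+_m)$ automatically satisfies (i). Property (ii) is a direct one-step computation using the cone bound on $\varphi^+_m$, and the characterization in (iii) follows from a standard cone-trapping argument: any orbit with forward growth rate controlled by some $\nu\in(\lambda',\mu')$ cannot escape the stable cone, and hence coincides with $W^-_m$ by the uniqueness part of the fixed-point argument (analogously for $W^+_m$).

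The regularity claims are the delicate point. For $C^1$ smoothness I would lift $\mathcal{T}$ to an operator acting on pairs (graph, candidate tangent subbundle), verify that it is again a fiber contraction on a suitable Lipschitz subspace of such pairs, and identify the limiting subbundle with $D\varphi^+_m$. For the hyperbolic case $\lambda<1<\mu$, $C^r$ regularity follows by induction on $r$ via the fiber contraction theorem applied to the bundle of $r$-jets of $\varphi^+$ (cf.\ \cite{HPS}), using the spectral gap $\lambda^r<\mu$, which is automatic here. The main obstacle I expect is the simultaneous bookkeeping in (\ref{cota-delta}): one must verify that the same $\gamma,\delta$ make the cone invariant, make $\mathcal{T}$ a strict contraction, and keep the rates on the fixed graph strictly separated by $\lambda'<\mu'$; once this is in place the $C^1$ and (in the hyperbolic case) $C^r$ upgrades are by now standard.
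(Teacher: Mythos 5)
Your proposal follows the same route as the paper: the paper only sketches this argument (Subsection 3.1), deferring details to Katok--Hasselblatt, and that sketch is precisely the graph transform on the compact metric space $C^0_\gamma$ of Lipschitz sequences, with the fixed point giving $\varphi^+_*$. Your additional remarks on the $C^1$ and $C^r$ upgrades via fiber contraction match the standard treatment the paper is citing, so there is no substantive difference in approach.
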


\begin{rem}
It is important to note that the axes $\C\sp l$ and $\C\sp k$ are not
invariant by the action of $(Df_m)_0$. However, there exist a
splitting $\C\sp n=E\sp+_m\oplus E\sp-_m$ with $\dim_\C E_m\sp+=l$ and
$\dim_\C E_m\sp-=k$, that are invariant by the action of $(Df_m)_0$,
satisfy that
\[||(Df_m)_0\sp{-1}|_{E_m\sp+}||\leq(\mu\sp\prime)\sp{-1},\, \,
\textrm{ and }\, \, ||(Df_m)_0|_{E_m\sp{-}}||\leq\lambda\sp\prime,\]
and in this case $T_0W\sp\pm_m=E\sp\pm_m$. See \cite{KH} for details.
\end{rem}

It is important also to note the following proposition, proved in
\cite{KH}.

\begin{pr}
The invariant manifolds (with $C\sp1$ topology) obtained in the
Hadamard-Perron Theorem, have continuous dependence with respect to
the family $f=\{f_m\}_{m\in\Z}$, with the $C\sp 1$ topology defined by
\[d_1(f,g)=\sup_{m\in\Z}d_{C\sp1}(f_m,g_m).\]
\end{pr}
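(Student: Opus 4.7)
The plan is to invoke the standard parametric-contraction argument that already underlies the Hadamard--Perron construction itself. Recall from \cite{KH} that the graphs $W_m^\pm = \graph \varphi_m^\pm$ arise as the unique fixed point of a graph-transform operator $\mathcal{T}_f$ acting on the complete metric space $\mathcal{X}_\gamma$ of bounded sequences $\{\varphi_m\}$ of Lipschitz maps with Lipschitz constant at most $\gamma$, endowed with the sup norm. The estimates in Theorem \ref{h_p_thm}, together with the bound \eqref{cota-delta} on $\delta$, guarantee that $\mathcal{T}_f$ is a contraction of $\mathcal{X}_\gamma$ with constant $k<1$ depending only on $\lambda,\mu,\gamma,\delta$; in particular $k$ is uniform in a $d_1$-neighborhood of $f$.

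First I would establish $C^0$ continuity of $\varphi_m^\pm$ with respect to $f$. Given two families $f,g$ both satisfying the hypotheses of Theorem \ref{h_p_thm}, both operators $\mathcal{T}_f,\mathcal{T}_g$ act on the same space $\mathcal{X}_\gamma$ as contractions with the common constant $k$. By the standard parametric fixed point estimate,
\[
\|\varphi_f^\pm - \varphi_g^\pm\|_\infty \le \frac{1}{1-k}\,\sup_{\varphi\in\mathcal{X}_\gamma}\|\mathcal{T}_f(\varphi)-\mathcal{T}_g(\varphi)\|_\infty,
\]
and a direct computation from the explicit formula defining $\mathcal{T}_f$ (involving $A_m,B_m$ and $\alpha_m,\beta_m$) shows that the right-hand side is controlled by a constant multiple of $d_1(f,g)$. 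Hence $\{\varphi_m^\pm\}$ depends continuously on $f$ uniformly in $m\in\Z$.

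To upgrade this to continuity of the derivatives $D\varphi_m^\pm$, I would apply the Hirsch--Pugh fiber-contraction technique. The linearization of $\mathcal{T}_f$ induces a bundle map $\widehat{\mathcal{T}}_f$ covering $\mathcal{T}_f$ on the fibered space $\mathcal{X}_\gamma\times\mathcal{L}_\gamma$, where $\mathcal{L}_\gamma$ is the closed ball of radius $\gamma$ in the space of bounded sequences of linear maps. The base map $\mathcal{T}_f$ contracts with constant $k$, and the restriction of $\widehat{\mathcal{T}}_f$ to each fiber is a contraction with constant $k'<1$ that is also uniform in $f$. By the fiber contraction theorem, the pair $(\varphi_f^\pm,D\varphi_f^\pm)$ is the unique globally attracting fixed point of $\widehat{\mathcal{T}}_f$, and the same parametric estimate applied to $\widehat{\mathcal{T}}_f$ yields $\|D\varphi_f^\pm - D\varphi_g^\pm\|_\infty \le C\,d_1(f,g)$, which gives continuity in the $C^1$ topology.

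The main obstacle is exactly this second step: the graph transform is a contraction on the space of Lipschitz sequences with the sup norm, but not on the $C^1$ norm, so the naive contraction-mapping argument does not produce $C^1$ dependence directly. The fiber-contraction theorem (or, equivalently, an explicit iterative comparison of the two derivative cocycles) is the standard device to bypass this, and its verification here is routine once the Lipschitz estimates already used in the proof of Theorem \ref{h_p_thm} are recorded with their dependence on $f$.
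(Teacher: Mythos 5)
Your argument is correct and is essentially the intended one: the paper offers no proof of this proposition, deferring entirely to \cite{KH}, and the scheme you describe --- uniform contraction of the graph transform on the Lipschitz sequence space for $C^0$ dependence, followed by the Hirsch--Pugh fiber contraction on the derivative data for $C^1$ dependence --- is exactly the mechanism used there. One small caveat: under the $C^1$ hypotheses of Theorem \ref{h_p_thm} the fiber maps depend only continuously (not Lipschitz-continuously) on the base point, since they involve $Df_m$ evaluated along the graph of $\varphi$; so your final estimate $\|D\varphi^\pm_f-D\varphi^\pm_g\|_\infty\leq C\,d_1(f,g)$ should be weakened to plain continuity, which is all the proposition asserts.
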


\subsection{Sketch of proof of Hadamard-Perron
Theorem}\label{sketch.proof}

In the proof of Theorem \ref{h_p_thm} (see \cite{KH}), the functions
$\varphi^+_m$ are obtained as fixed point of a contractive operator in
a space of Lipschitz maps. We enumerate the main fact:
\begin{enumerate}
\item Let $C_\gamma^0$ the space of sequences as form
$\varphi_*=\{\varphi_m\}_{m\in\Z}$ where each $\varphi_m$ is in the
set
\[C_\gamma^0(\C^l)=\{\varphi:\C^l\rightarrow\C^{n-l}\,
:\,\textit{Lip}(\varphi)<\gamma,\textrm{ and }\varphi(0)=0\}.\]
\item The set $C_\gamma^0$ is a compact metric space with the
metric defined by
\[d_*(\varphi_*,\phi_*)=\sup_{m\in\Z}d(\varphi_m,\phi_m);\]
where
\[d(\varphi,\phi)=\sup_{x\in\C^l\setminus\{0\}}
\frac{||\varphi(x)-\phi(x)||}{||x||}\]
is a metric in $C_\gamma^0(\C^l)$. Note that $(C_\gamma^0(\C^l),d)$ is
also compact metric space.
\item The action of $f=\{f_m\}_{m\in\Z}$ in the space
$C^0_\gamma$ is the desired contraction; this action is defined as
follows: denote by $(f_m)_*\varphi$ the unique Lipschitz map that
satisfy the equation
\[f_m(\graph\varphi)=\graph((f_m)_*\varphi).\]
On the other hand, we have the bijection
$G^m_\varphi:\C^l\rightarrow\C^l$ defined by
\[G^m_\varphi(x)=A_mx+\alpha_m(x,\varphi(x)),\]and the map
$F^m_\varphi:\C^{l}\rightarrow\C^{l}$ given by
\[F^m_\varphi(x)=B_m\varphi(x)+\beta_m(x,\varphi(x)),\]
it follows that the function $(f_m)_*\varphi$ is given by the
expression
\[(f_m)_*\varphi(x)=F^m_\varphi\circ(G^m_\varphi)^{-1}(x).\]
Finally if we define $f\varphi_*=\{\psi_m\}_{m\in\Z}$, whit
$\psi_{m+1}=(f_m)_*\varphi_m$, we have that
\begin{equation}\label{limit-eq}
\lim_{n\rightarrow\infty}f^n\varphi_*=\varphi^+_*,
\end{equation}
where $\varphi\sp*\in C\sp0_\gamma$ and
$\varphi^+_*=\bpl\varphi_m\sp+\bpr_{m\in\Z}$ is the sequences of
function given by the Hadamard-Perron Theorem.
\end{enumerate}

\subsection{Technical considerations}

To apply the previous Theorem and the subsequent results, is necessary
to cons\-truct the family $\{f_m\}_{m\in\Z}$ that carries the
asymptotic information of the map $f$ along the whole orbit of some
point $x\in\Lambda$. For this construction, we assume that $f$ is
partially hyperbolic on $\Lambda$ (see Definition \ref{part_hip}) with
$||Df\sp n(x)|_{E(x)}||\leq C\widetilde{\lambda}\sp n$ and
$||Df\sp{-n}(x)|_F(x)||\leq C\widetilde{\mu}\sp{-n}$ for all $n\geq0$.

First one, note that given $\delta>0$ we can find $R>0$ such that
for every $x_0\in\Lambda$ we can write
\[f(x)=f(x_0)+Df(x_0)(x-x_0)+R_{x_0}(x-x_0)\]
on $\C\sp n$, and $||R_{x_0}(x-x_0)||_{C\sp1}<\delta$ for all
$x\in\Delta_n(x_0,2R)$.

Moreover, the following statement hold.

\begin{lm} For every $\delta>0$, there exist $R>0$ uniformly in
$\Lambda$, and smooth diffeomorphisms
$f_{x_0}:\C\sp n\rightarrow\C\sp n$ for $x_0\in\C\sp n$, such that
$f_{x_0}(0)=0$,
\[f_{x_0}(h)=f(x+h)-f(x_0)\, \, \, \textit{ for all
}h\in\Delta_n(0,R),\] and $||f_{x_0}(h)-Df(x_0)(h)||_{C\sp1}<\delta$
for all $h\in\C\sp n$. Moreover, we can construct this family so that
the functions $f_{x_0}$ depend continuously in the $C\sp 1$ topology,
of the point $x_0$.
\end{lm}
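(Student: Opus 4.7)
The plan is a standard cut-off modification of $f$ near $x_0$. Let $\rho:\C\sp n\to[0,1]$ be a smooth bump with $\rho\equiv1$ on $\Delta_n(0,R)$, $\rho\equiv0$ outside $\Delta_n(0,2R)$, and $\|D\rho\|_{C\sp0}\leq C_0/R$ for a fixed dimensional constant $C_0$. Writing the Taylor remainder
$$R_{x_0}(h)=f(x_0+h)-f(x_0)-Df(x_0)(h),$$
I set
$$f_{x_0}(h)=Df(x_0)(h)+\rho(h)\,R_{x_0}(h).$$
Then $f_{x_0}(0)=0$; on $\Delta_n(0,R)$ one has $\rho\equiv1$, so $f_{x_0}(h)=f(x_0+h)-f(x_0)$; and outside $\Delta_n(0,2R)$ the map $f_{x_0}$ coincides with the linear isomorphism $Df(x_0)$.

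The main point is the global bound $\|f_{x_0}-Df(x_0)\|_{C\sp1}<\delta$, which reduces to bounding $\|\rho R_{x_0}\|_{C\sp1}$. Because $f$ is holomorphic and $\Lambda$ is compact, Taylor's theorem gives a uniform constant $K>0$ with $\|R_{x_0}(h)\|\leq K\|h\|\sp2$ and $\|DR_{x_0}(h)\|\leq K\|h\|$ for all $x_0\in\Lambda$ and $\|h\|\leq 2R$. Consequently, on $\Delta_n(0,2R)$,
$$\|\rho R_{x_0}\|_{C\sp0}\leq 4KR\sp2,\qquad \|D(\rho R_{x_0})\|_{C\sp0}\leq \|D\rho\|_{C\sp0}\cdot 4KR\sp2+2KR\leq (4C_0+2)KR,$$
so both are $O(R)$. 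Choosing $R>0$ small enough in terms of $\delta$ and $K$ yields the desired $C\sp1$ estimate, with $R$ uniform in $x_0\in\Lambda$.

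For the diffeomorphism property, note that by compactness of $\Lambda$ and invertibility of $Df$, the quantity $M=\sup_{x_0\in\Lambda}\|Df(x_0)\sp{-1}\|$ is finite. If $\delta<1/M$, then $f_{x_0}-Df(x_0)$ has global Lipschitz constant strictly less than $\|Df(x_0)\sp{-1}\|\sp{-1}$, so the standard Lipschitz perturbation argument makes $f_{x_0}$ a global homeomorphism of $\C\sp n$; since $\|Df_{x_0}(h)-Df(x_0)\|<\delta<1/M$ everywhere, $Df_{x_0}(h)$ stays inside the open set of invertible linear maps, and $f_{x_0}$ is a smooth diffeomorphism. Continuous dependence in $C\sp1$ is automatic: $\rho$ is fixed and $(x_0,h)\mapsto R_{x_0}(h)$ is smooth jointly, so $x_0\mapsto f_{x_0}=Df(x_0)+\rho\cdot R_{x_0}$ is continuous (in fact smooth) as a map from $\Lambda$ into $C\sp1(\C\sp n,\C\sp n)$.

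There is no genuine obstacle; the only subtlety is the interaction between the blow-up $\|D\rho\|_{C\sp0}\sim 1/R$ of the cut-off and the quadratic vanishing $\|R_{x_0}(h)\|=O(R\sp2)$ of the Taylor remainder. Their product is of order $R$, not of order $1$, and it is exactly this scaling that forces $R$ to be chosen \emph{after} $\delta$ is given, and makes the lemma work uniformly on the compact set $\Lambda$.
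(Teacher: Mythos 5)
Your proposal is correct and follows essentially the same route as the paper: the paper's formula $f_{x_0}(h)=\rho(h)(f(h+x_0)-f(x_0))+(1-\rho(h))Df(x_0)\cdot h$ is algebraically identical to your $Df(x_0)(h)+\rho(h)R_{x_0}(h)$, and the key scaling ($\|D\rho\|\sim 1/R$ against the quadratic Taylor remainder) is exactly what the paper invokes by ``choosing $R$ small.'' You have merely supplied the quantitative estimates, the diffeomorphism check, and the continuity in $x_0$ that the paper leaves implicit.
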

\begin{proof}
Given $R>0$ take $\rho:\C\sp n\rightarrow[0,1]$ a smooth function
such that $\rho=1$ on $\Delta_n(x_0,R)$, $\rho=0$ outside of
$\Delta_n(x_0,2R)$ and $||D\rho||\leq C/R$. So
defining\[f_{x_0}(h)=\rho(h)(f(h+x_0)-f(x_0))+(1-\rho(h))Df(x_0)\cdot
h\] we conclude this proof, once we choose $R>0$ small.
\end{proof}

Now taking $L_{x_0}:\C\sp n=\C\sp l\oplus\C\sp k\rightarrow \C\sp n$
a linear orthogonal complex map such that $L_{x_0}(\C\sp l)=F(x_0)$
and $L_{x_0}(\C\sp k)=F(x_0)\sp\perp$, and define the maps
$\widehat{f}_{x_0}=L_{f(x_0)}\sp{-1}\circ f_{x_0}\circ L_{x_0}$, then
$\widehat{f}_{x_0}$ has the form
\[\widehat{f}_{x_0}(x,y)=(A_{x_0}x+\alpha_{x_0}(x,y),B_{x_0}
y+\beta_{x_0}(x, y)).\]
To finish, we denote $x_m=f\sp m(x_0)$ with $m\in\Z$ and
$f_m=\widehat{f}_{x_m}$, then:
\begin{itemize}
\item[{\it 1)}] $f_m$ is holomorphic in $\Delta_n(0,R')$ for
every $R'<R$,
\item[{\it 2)}] since that the splitting $T_\Lambda\C\sp
n=E\oplus F$
varies continuously, and the angle between the subspaces $F$ and
$E$ are uniformly away from zero (see \cite{pesin} for instance), it
follows that there exist $\lambda<\widetilde{\mu}$ such that are
satisfied the hypothesis of Hadamard-Perron Theorem.
\item[{\it 3)}] it follows from the previous construction that
the correspondence $x_0\mapsto\{f_m\}_{m\in\Z}$ is continuous in the
$C\sp 1$ topology.
\end{itemize}

\subsection{Proof of Theorem \ref{hol1}}

To proof the Theorem \ref{hol1}, is only necessary to observe the
following Proposition.

\begin{pr}\label{pr.holm}
Under the hypothesis of Theorem \ref{h_p_thm}, suppose that the
fo\-llo\-wing additional conditions hold:
\begin{enumerate}
\item $\mu>1$.
\item There exists $R>0$ such that, for each $m\in\Z$, the map
$f_m$ is holomorphic in some neighborhood of the closed polydisc
$\Delta_n(0,R)\subset\C^n$.
\end{enumerate}
Then there exists $0<r< R$ such that each $\varphi^+_m$ is holomorphic
in some neighborhood of $\Delta_l(0,r)\subset\C^l$, where
$\varphi^+_m$ is as in (1) in the Theorem \ref{h_p_thm}.
\end{pr}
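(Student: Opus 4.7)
The strategy is to realize $\varphi^+_m$ as the uniform limit, on a suitable polydisc, of \emph{holomorphic} iterates of the graph transform, and then conclude by Weierstrass's theorem. Take as initial data the constant sequence $\varphi^{(0)}_* = \{0\}_{m\in\Z}$, which lies trivially in $C^0_\gamma$, and each $\varphi^{(0)}_m \equiv 0$ is holomorphic on $\C^l$. Setting $\varphi^{(n)}_* = f^n\varphi^{(0)}_*$, the equation (\ref{limit-eq}) yields $\varphi^{(n)}_m \to \varphi^+_m$ in the metric $d_*$. Since $d(\varphi,\phi) \geq \sup_{x \in \Delta_l(0,r)\setminus\{0\}} \|\varphi(x)-\phi(x)\|/r$, this convergence is uniform on every polydisc $\Delta_l(0,r)$. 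So it suffices to find $r \in (0,R)$, independent of $m$ and $n$, such that every $\varphi^{(n)}_m$ is holomorphic on a neighborhood of $\overline{\Delta_l(0,r)}$.

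We argue by induction on $n$. Assume the claim holds at step $n$. For the step $n+1$, recall that
\[\varphi^{(n+1)}_{m+1} = F^m_{\varphi^{(n)}_m}\circ\bigl(G^m_{\varphi^{(n)}_m}\bigr)^{-1},\]
with $G^m_\varphi(x)=A_mx+\alpha_m(x,\varphi(x))$ and $F^m_\varphi(x)=B_m\varphi(x)+\beta_m(x,\varphi(x))$. Since $\varphi = \varphi^{(n)}_m$ is holomorphic on $\Delta_l(0,r)$ with $\varphi(0)=0$ and $\|D\varphi\|<\gamma$, we have $\|\varphi(x)\|\leq\gamma r<R$ for $\|x\|<r<R$, so $(x,\varphi(x))\in\Delta_n(0,R)$ and both $G^m_\varphi$ and $F^m_\varphi$ are holomorphic on a neighborhood of $\overline{\Delta_l(0,r)}$. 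Using $\|A_m^{-1}\|\leq\mu^{-1}$, $\|D\alpha_m\|\leq\delta$ and $\|D\varphi\|\leq\gamma$, one gets
\[\|DG^m_\varphi(x)v\|\geq(\mu-\delta(1+\gamma))\|v\|,\]
and since $\mu>1$ and $\delta$ satisfies (\ref{cota-delta}), this lower bound exceeds $1$ uniformly in $m$ and in $\varphi\in C^0_\gamma$. A quantitative application of the holomorphic inverse function theorem then produces a uniform $r>0$ such that $G^m_\varphi$ is a biholomorphism from a neighborhood of $(G^m_\varphi)^{-1}(\overline{\Delta_l(0,r)})\subset \overline{\Delta_l(0,r)}$ onto a neighborhood of $\overline{\Delta_l(0,r)}$. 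Consequently $(G^m_\varphi)^{-1}$ is holomorphic on a neighborhood of $\overline{\Delta_l(0,r)}$ with image inside the domain of $\varphi$, and the composition $\varphi^{(n+1)}_{m+1}$ is holomorphic there. This closes the induction.

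Combining the two steps, $\varphi^+_m$ is the uniform limit on $\overline{\Delta_l(0,r)}$ of functions holomorphic on a neighborhood of this closed polydisc, hence holomorphic on the interior by Weierstrass's theorem. Replacing $r$ by any slightly smaller radius gives holomorphy on a neighborhood of $\Delta_l(0,r)$.

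The main obstacle is the uniformity in the inductive step: guaranteeing that one and the same radius $r$ serves every $m \in \Z$ and every iterate, which amounts to a uniform quantitative inverse function theorem for the family $\{G^m_\varphi\}$. This rests essentially on two uniform ingredients, both already built into the Hadamard--Perron setup: the uniform expansion of the linear parts $A_m$ provided by the hypothesis $\mu>1$, and the uniform smallness (both in $C^1$-norm of $\alpha_m,\beta_m$ and in Lipschitz constant $\gamma$ of $\varphi$) of all the perturbative terms.
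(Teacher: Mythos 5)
Your argument is correct and is essentially the paper's own proof: the paper packages your induction from the zero seed as the statement that the set $\mathcal{O}^0_\gamma(r)$ of sequences holomorphic near $\overline{\Delta_l(0,r)}$, with $r=\mu_0^{-1}R$ and $\mu_0=\mu-\delta(1+\gamma)$, is closed in $C^0_\gamma$ (your Weierstrass step, via uniform convergence on compacts) and invariant under the graph transform (your inductive step, driven by the same expansion estimate for $G^m_\varphi$). The one small imprecision, which the paper itself also flags, is that $\mu_0>1$ requires $\gamma$ and $\delta$ to be chosen small enough, not merely $\mu>1$ together with the bound (\ref{cota-delta}).
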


So in the hypothesis of the Theorem \ref{hol1}, is only necessary to
work with $f\sp{-1}$ instead $f$, and construct the family as in the
previous subsection.

\begin{rem}
In several works (see for example \cite{bs1} or \cite{bs2}), is
proved the holomorphy of the stable/unstable manifolds under the
hypothesis of hyperbolicity in the compact invariant set. In our case,
we only consider partially hyperbolic map with unstable direction.
\end{rem}

\begin{proof}[{\bf Proof of Proposition \ref{pr.holm}}]
Denote by $\mathcal{O}^0_\gamma(r)\subset C^0_\gamma$, the set of
sequences of functions that are holomorphic in some neighborhood of
the closed polydisc $\Delta_l(0,r)$ in each level $m\in\Z$. To prove
the Proposition, is only necessary to prove that there exists
$0<r< R$ such that:
\begin{itemize}
\item[(a)] $\mathcal{O}^0_\gamma(r)$ is a closed space in
$C^0_\gamma$,
\item[(b)] $\mathcal{O}^0_\gamma(r)$ is invariant by the action
$f$.
\end{itemize}
If we assume that (a) and (b) holds, and since that equation
(\ref{limit-eq}) hold for every $\varphi_*\in\mathcal{O}^0_\gamma(r)$,
the limit
\[\lim_{n\rightarrow\infty}f^n\varphi_*=\varphi^+_*\]
there exists and is an element of $\mathcal{O}^0_\gamma(r)$, so each
function $\varphi^+_m$ is holomorphic in some neighborhood of
$\Delta_l(0,r)$.

Observe that for proof the two previous assertions, is only necessary
proof that:
\begin{itemize}
\item[(a')] $\mathcal{O}^0_\gamma(r,\C^l)$ is a closed space in
$C^0_\gamma(\C^l)$,
\item[(b')] $\mathcal{O}^0_\gamma(r,\C^l)$ is invariant by the
action $f_m$, for all $m\in\Z$.
\end{itemize}
where $\mathcal{O}^0_\gamma(r,\C^l)$ is the subset of
$C^0_\gamma(\C^l)$, whose elements are holomorphics function in some
neighborhood of the polydisc $\Delta_l(0,r)$.

The first assertion (a'), follows after observing that the metric
defined in the paragraph (2.) of the section \ref{sketch.proof},
induce the uniformly convergence on compact topology in
$\mathcal{O}^0_\gamma(r,\C^l)$, so if
$\varphi_n\in\mathcal{O}^0_\gamma(r,\C^l)$ and
$\varphi_n\rightarrow\varphi$ for some $\varphi\in C^0_\gamma(\C^l)$
then, the limit map $\varphi$ is an element of the set
$\mathcal{O}^0_\gamma(r,\C^l)$.

The proposition (b'), it follows from the following: in the proof of
the Theorem \ref{h_p_thm} in \cite{KH}, we can see that
\begin{equation}\label{G_ineq}
||G^m_\varphi(x)||\geq\mu_0||x||.
\end{equation}
where the constant is $\mu_0=(\mu-\delta(1+\gamma))$. This constant is
greater than 1 if and only if, $\mu>1$ and $\delta$ and $\gamma$ are
small enough. If we take $r=\mu_0^{-1}R$, the functions $F^m_\varphi$
and $G^m_\varphi$ are holomorphics in some neighborhood of
$\Delta_l(0,r)$ when $\varphi\in \mathcal{O}^0_\gamma(r,\C^l)$. It
follows by the equation (\ref{G_ineq}) that $\Delta_l(0,R)\subset
G^m_\varphi(\Delta_l(0,r))$, then the function $(G^m_\varphi)^{-1}$ is
holomorphic in $\Delta_l(0,r)$, and also by equation (\ref{G_ineq}),
it follows that
\[(G^m_\varphi)^{-1}(\Delta_l(0,r))\subset\Delta_l(0,\mu^{-1}
_0r)\subset\Delta_l(0,r).\]
We obtain that $F^m_\varphi\circ(G^m_\varphi)^{-1}$ is holomorphic in
some neighborhood of $\Delta_l(0,r)$, is as desired.
\end{proof}

\section{Dynamically defined and Overlapping
pro\-per\-ty}\label{section4}

Now we return to the original context exposed in the Section
\ref{preliminaries}. The map $f:\C\sp n\rightarrow\C\sp n$ is a
biholomorphism partially hyperbolic on a compact $f$-invariant set
$\Lambda$ with splitting $T_\Lambda\C\sp n=E\sp s\oplus F$. We recall
the existence of the $cu$-leaf $\www{cu}{x}{\e}$ for every
$x\in\Lambda$ and $0<\e\leq1$, that are locally $f$-invariant.

On the other hand, the notion of $cu$-leaves dynamically defined, say
that the $cu$-leaf are locally, the local unstable set. Then is
natural to expect that if the $cu$-leaf are dynamically defined, they
have a similar asymptotic behavior than the unstable set. This is
exemplified in the following Lemma.

\begin{lm}\label{equiv_cond_dyn_def}
The $cu$-leaves are dynamically defined, if and only if, there exists
$r\ll1$ such that for all $x\in\Lambda$, the following statement
holds:
\begin{enumerate}
\item For any $r_1<r$, there exist $r_0<r_1$ such that for
every $n\geq0$ and $x\in\Lambda$, $f\sp{-n}(\www{cu}{x}{r_0})\subset
\www{cu}{f\sp{-n}(x)}{r_1}$.
\item For every $r_1<r$ and $r_0<r_1$, there exists
$N=N(r_0,r_1)$ such that for all $x\in \Lambda$ and $n\geq N$
$f\sp{-n}(\www{cu}{x}{r_1})\subset\www{cu}{f\sp{-n}(x)}{r_0}$.
\end{enumerate}
\end{lm}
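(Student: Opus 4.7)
The plan is to prove the two implications separately. The reverse direction $(\Leftarrow)$ combines conditions (1) and (2) directly; the forward direction $(\Rightarrow)$ uses the continuity of the lamination $x\mapsto\phi^{cu}(x)$ on the compact set $\Lambda$, the local $f^{-1}$-invariance of $cu$-leaves from Theorem~\ref{hol0}(b), the elementary $f^{-1}$-invariance $f^{-1}(\www{u}{z}{\delta})\subset\www{u}{f^{-1}(z)}{\delta}$ of local unstable sets (immediate from the definition of $\www{u}{z}{\delta}$), and the inclusion $\www{u}{z}{\delta}\subset\www{cu}{z}{r_1}$ for $\delta=\delta(r_1)$ small, stated in the excerpt just before the definition of dynamically defined.

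For $(\Leftarrow)$, I would fix $r_1<r$ small and let $r_0<r_1$ be given by (1). Continuity of $\phi^{cu}$ and compactness of $\Lambda$ supply a uniform $K>0$ with $\d(\phi^{cu}(z)(u),z)\leq K\|u\|$ for every $z\in\Lambda$ and $u\in\Delta_l(0,1)$. For $y\in\www{cu}{x}{r_0}$, condition (1) places $f^{-n}(y)$ inside $\www{cu}{f^{-n}(x)}{r_1}$ for every $n\geq 0$, whence $\d(f^{-n}(y),f^{-n}(x))\leq Kr_1$; applying (2) with any $r'<r_1$ gives some $N$ beyond which $\d(f^{-n}(y),f^{-n}(x))\leq Kr'$, so $\d(f^{-n}(y),f^{-n}(x))\to 0$. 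This places $y\in\www{u}{x}{Kr_1}\subset\www{u}{x}{loc}$ once $r_1$ is small.

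For $(\Rightarrow)$, assume $\www{cu}{x}{r^*}\subset\www{u}{x}{loc}$ for some $r^*>0$ uniform in $x\in\Lambda$. For claim (1), given $r_1<r$, I would choose $\delta>0$ small so that $\www{u}{z}{\delta}\subset\www{cu}{z}{r_1}$ uniformly in $z\in\Lambda$; the core estimate is then to produce a uniform $r_0<r_1$ such that $\www{cu}{x}{r_0}\subset\www{u}{x}{\delta}$ for every $x\in\Lambda$. Once this is in hand, iterating the $f^{-1}$-invariance of $\www{u}{z}{\delta}$ yields $f^{-n}(\www{cu}{x}{r_0})\subset\www{u}{f^{-n}(x)}{\delta}\subset\www{cu}{f^{-n}(x)}{r_1}$ for every $n\geq 0$. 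For claim (2), I argue by contradiction: if (2) fails, compactness of $\Lambda$ and continuity of $\phi^{cu}$ furnish sequences $x_k\to x_\infty\in\Lambda$, $y_k\to y_\infty\in\overline{\www{cu}{x_\infty}{r_1}}$, and $n_k\to\infty$ with $f^{-n_k}(y_k)\notin\www{cu}{f^{-n_k}(x_k)}{r_0}$. The dynamically defined hypothesis at $x_\infty$ gives $\d(f^{-n}(y_\infty),f^{-n}(x_\infty))\to 0$; combined with joint continuity of each $f^{-n}$ and a diagonal extraction exploiting the same uniform containment $\www{u}{z}{\delta}\subset\www{cu}{z}{r_0}$ used in (1), this produces the contradiction.

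The main obstacle is the uniformization underlying the core estimate in $(\Rightarrow)$(1), namely upgrading the pointwise-in-$x$ inclusion $\www{cu}{x}{\e}\subset\www{u}{x}{loc}$ to the quantified statement that for every $\delta>0$ there exists $r_0>0$ with $\www{cu}{x}{r_0}\subset\www{u}{x}{\delta}$ for all $x\in\Lambda$. Since $\sup_{n\geq 0}\d(f^{-n}(y),f^{-n}(x))$ is not monotone in any natural parameter, Dini's theorem does not apply directly; the proof must combine the $f^{-1}$-invariance of the sets $\www{u}{z}{\delta}$ with the continuity of the $cu$-lamination in a careful compactness scheme, and the same compactness-and-continuity difficulty reappears in the contradiction argument for (2).
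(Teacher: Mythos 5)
The paper does not actually supply a proof of this lemma (it declares it elementary and leaves it to the reader), so there is no written argument to compare against; your proposal has to stand on its own. Your $(\Leftarrow)$ direction is complete and correct: condition (1) keeps the backward orbit of every $y\in\www{cu}{x}{r_0}$ inside the uniformly bounded leaves $\www{cu}{f\sp{-n}(x)}{r_1}$, giving $y\in\www{u}{x}{Kr_1}$ once condition (2), applied with targets $r'\to0$, forces $\d(f\sp{-n}(y),f\sp{-n}(x))\rightarrow0$.

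The $(\Rightarrow)$ direction, however, is not yet a proof. You correctly reduce both items to a single uniformization claim --- for every $\delta>0$ there is $r_0>0$ with $\www{cu}{x}{r_0}\subset\www{u}{x}{\delta}$ for all $x\in\Lambda$, and, for item (2), the uniform-in-$(x,y)$ convergence $\sup_{n\geq N}\d(f\sp{-n}(y),f\sp{-n}(x))\rightarrow0$ --- but you leave exactly this claim unproven, and it is the entire content of the implication. The difficulty is genuine and is not dispatched by compactness of $\Lambda$ plus continuity of $\phi\sp{cu}$: the function $\Psi_N(x,y)=\sup_{n\geq N}\d(f\sp{-n}(y),f\sp{-n}(x))$ is a supremum of continuous functions, hence only lower semicontinuous, which is the wrong semicontinuity for a Dini-type argument; and a sequence $(x_k,y_k)\rightarrow(x,y)$ with $\d(f\sp{-n_k}(y_k),f\sp{-n_k}(x_k))\geq\delta$ and $n_k\rightarrow\infty$ produces in the limit only a pair $(w,z)=\lim(f\sp{-n_k}(x_k),f\sp{-n_k}(y_k))$ with $z$ on the leaf of $w$, $\d(z,w)\geq\delta$, and the full orbit of $z$ confined to uniformly bounded leaves along the orbit of $w$. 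The dynamically-defined hypothesis constrains only backward asymptotics and does not obviously exclude such a pair (contrast with the propositions of Section 6, where the analogous limit pair directly contradicts forward expansiveness). Likewise, the natural bootstrap ``prove (2) first, then handle the finitely many times $0\leq n\leq N$ by iterating Theorem~\ref{hol0}(b)'' runs into a circular dependency: the $r_0$ needed for the initial block of iterates depends on $N$, which in turn depends on $r_0$. So either the uniformization must be read into the definition of dynamically defined (interpreting $\www{u}{x}{loc}$ as ``for every $\delta$ there is $\e$ with $\www{cu}{x}{\e}\subset\www{u}{x}{\delta}$ uniformly in $x$''), in which case the forward direction becomes the two-line argument you indicate, or the missing compactness scheme must actually be exhibited; as written, your proposal names the obstacle but does not overcome it.
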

\begin{proof}
This is elementary, and the proof is left to the reader.
\end{proof}

The first statement in the previous Lemma say that the local
$cu$-leaf not grow to the past, then is always contained in a leaf
of fixed size, and the second say even more: the local $cu$-leaf
become small after a fixed number of iterates to the past.

We can do a more detailed description of the asymptotic behavior of
the $cu$-leaf. For this this we introduce the notion of {\it
overlapping property}.

\begin{defi}
Given a number $r>0$, we say that the $cu$-leaves has the overlapping
property for $r$, if for every $0<r_2<r$ there exist
$0<r_{-1}<r_0<r_1<r_2$, a number $N=N(r_0,r_1)$ and closed topological
balls $B\sp{cu}(x)$ with $\www{cu}{x}{r_{-1}}\subset
B\sp{cu}\subset(\www{cu}{x}{r_0})\sp\circ$ for every $x\in \Lambda$,
such that the following statement holds:
\begin{enumerate}
\item For every $n\geq N$ we have the inclusion
$f\sp{-n}(\www{cu}{x}{r_1})\subset(\www{cu}{f\sp{-n}(x)}{r_{0}}
)\sp\circ$,
\item $\www{cu}{f\sp{-N}(x)}{r_{-1}}\subset
f\sp{-N}(\www{cu}{x}{r_1})\subset
(B\sp{cu}(f\sp{-N}(x)))\sp\circ$,
\item For every $0\leq k\leq N$, we have
$f\sp{k}(B\sp{cu}(f\sp{-N}(x)))\subset(\www{cu}{f\sp{N-k}(x)}{r}
)\sp\circ$.
\end{enumerate}
\end{defi}

The overlapping is produced by the topological balls $B\sp{cu}(x)$
after a finite number of iterations to the future. Moreover,
the previous definition establish that the size of the topological
balls increase because
\[B\sp{cu}(f\sp N(x))\subset\www{cu}{f\sp N(x)}{r_1}\subset f\sp
N(B\sp{cu}(x))\sp\circ,\]
but do not excessively (property 3 in the previous definition). Also
note that we require that the size of the balls be in some sense
uniform on $x$ (the condition $\www{cu}{x}{r_{-1}}\subset
B\sp{cu}\subset(\www{cu}{x}{r_0})\sp\circ$).

The main result of this section is to proof the following Proposition.

\begin{pr}\label{pr.cueq}
If the $cu$-leaves are dynamically defined, then there exists $r>0$
such that such that the $cu$-leaves has the overlapping property for
$r$.
\end{pr}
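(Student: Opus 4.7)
The plan is to construct the topological balls $B^{cu}(x)$ as pullbacks of $cu$-leaves of a suitable intermediate radius by $f^N$, and to extract the hierarchy of constants $r_{-1}<r_0<r_1<r_2$ by careful bookkeeping on top of Lemma \ref{equiv_cond_dyn_def}. Concretely, given $r_2<r$, first apply Lemma \ref{equiv_cond_dyn_def}(1) to choose $\tilde r_1<r_2$ so that $f^{-n}(\www{cu}{x}{\tilde r_1})\subset\www{cu}{f^{-n}(x)}{r_2}$ for every $n\ge 0$ and $x\in\Lambda$; then pick $r_1<\tilde r_1$ and use Lemma \ref{equiv_cond_dyn_def}(2) to obtain $r_0<r_1$ together with $N=N(r_0,\tilde r_1)$ satisfying $f^{-n}(\www{cu}{x}{\tilde r_1})\subset\www{cu}{f^{-n}(x)}{r_0}$ for all $n\ge N$. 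Define
\[ B^{cu}(x):=\overline{f^{-N}\bigl(\www{cu}{f^N(x)}{\tilde r_1}\bigr)}, \]
where the closure is taken inside the $cu$-leaf through $x$.

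With these choices, properties (1) and (3) of the overlapping property fall out essentially for free. Property (1) is exactly the second Lemma inclusion applied to $\www{cu}{x}{r_1}\subset\www{cu}{x}{\tilde r_1}$, the strict inclusion into the interior being supplied by the slack $r_1<\tilde r_1$. Property (3) is verified by the identity $f^k(B^{cu}(f^{-N}(x)))=\overline{f^{k-N}(\www{cu}{x}{\tilde r_1})}$ and the choice of $\tilde r_1$, which forces the last set into $\overline{\www{cu}{f^{k-N}(x)}{r_2}}\subset(\www{cu}{f^{k-N}(x)}{r})^\circ$. The upper inclusion $B^{cu}(x)\subset(\www{cu}{x}{r_0})^\circ$ and the right inclusion in (2), namely $f^{-N}(\www{cu}{x}{r_1})\subset(B^{cu}(f^{-N}(x)))^\circ$, are also direct.

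The hard step is the uniform lower bound: producing a single $r_{-1}>0$, independent of the point, such that $\www{cu}{x}{r_{-1}}\subset B^{cu}(x)$ for every $x\in\Lambda$; equivalently, $f^N(\www{cu}{x}{r_{-1}})\subset\www{cu}{f^N(x)}{\tilde r_1}$. Lemma \ref{equiv_cond_dyn_def} and Theorem \ref{hol0}(b) only provide control under \emph{backward} iteration, while here we need a \emph{forward}, finitely many, uniform control. To dispatch this, I would use that $y\mapsto\phi^{cu}(y)$ is continuous from $\Lambda$ into $Emb^1(\Delta_l(0,1),\C^n)$ (Theorem \ref{hol0}), hence so is the composition $y\mapsto f^N\circ\phi^{cu}(y)$; since $cu$-leaves are locally $f$-invariant, for each fixed $y$ a small enough $\varepsilon_y$ satisfies $f^N(\www{cu}{y}{\varepsilon_y})\subset\www{cu}{f^N(y)}{\tilde r_1}$, and compactness of $\Lambda$ together with the $C^1$-equicontinuity of the family $\{f^N\circ\phi^{cu}(y)\}_{y\in\Lambda}$ upgrades this to a uniform $r_{-1}>0$.

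The main obstacle is exactly this last equicontinuity step: all other parts are bookkeeping with the nested constants. In brief, the scheme is to use Lemma \ref{equiv_cond_dyn_def} to obtain the radii with enough slack, to define the balls $B^{cu}(x)$ as $N$-step pullbacks, and then to invoke continuity of the $cu$-foliation on compact $\Lambda$ to convert pointwise forward-$N$-step estimates into the required uniform $r_{-1}$; the three overlapping conditions are then read off immediately.
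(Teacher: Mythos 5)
Your proposal is correct in substance and reaches the same three bullet points, but it builds the balls $B^{cu}(x)$ differently from the paper. The paper first fixes $\delta>0$ with $\d(\partial\www{cu}{x}{r_2},\partial\www{cu}{x}{r})>\delta$ uniformly, introduces the shrunk sets $B(y)=\{z\in\www{cu}{y}{r}:\d(z,\partial\www{cu}{y}{r})\geq\delta/2\}$, and defines $B^{cu}(f^{-N}(x))$ as the connected component containing $f^{-N}(\www{cu}{x}{r_1})$ of the intersection $\www{cu}{f^{-N}(x)}{r_0-\e}\cap f^{-1}(B(f^{-(N-1)}(x)))\cap\cdots\cap f^{-N}(B(x))$, so that item (3) of the overlapping property holds by construction; you instead take a single $N$-fold pullback of an intermediate leaf and deduce item (3) from Lemma \ref{equiv_cond_dyn_def}(1) applied to $\tilde r_1$, which is cleaner and equally valid. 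For the uniform $r_{-1}$, both arguments are compactness arguments, but the paper phrases it purely in backward terms — the continuous function $\rho(x)=\d(f^{-N}(x),\partial f^{-N}(\www{cu}{x}{r_1}))$ has positive infimum, and a local-uniformity-plus-compactness step produces $r_{-1}$ — whereas your formulation $f^{N}(\www{cu}{x}{r_{-1}})\subset\www{cu}{f^{N}(x)}{\tilde r_1}$ requires the additional observation that small \emph{forward} images of a chosen $cu$-leaf land inside the chosen leaf at the image point; Theorem \ref{hol0}(b) only gives the backward inclusion, so you must first note that $\www{cu}{f^{N}(y)}{\e_2}\subset f^{N}(\www{cu}{y}{\e_1})$ and then invoke invariance of domain to see that the two $l$-disks agree near $f^{N}(y)$. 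This is true but should be made explicit; the paper's backward phrasing avoids it. Two small bookkeeping points: your claim that $B^{cu}(x)\subset(\www{cu}{x}{r_0})^{\circ}$ is ``direct'' is not quite — since $B^{cu}(x)$ is closed you need the same $r_0-\e$ slack the paper builds in (apply Lemma \ref{equiv_cond_dyn_def}(2) with $r_0-\e$); and the strict inclusions in (1) and the right half of (2) do follow, but from openness of the relevant images in the leaf rather than from the slack $r_1<\tilde r_1$ alone. With these details filled in, your argument is a legitimate and somewhat leaner alternative to the paper's proof.
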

\begin{proof}
Let $r>0$ has in the Lemma \ref{equiv_cond_dyn_def}\,. If let us take
$r_2<r$, then for every $x\in \Lambda$,
\[\d\left(\partial\www{cu}{x}{r_2},\partial\www{cu}{x}{r}\right)>0,\]
where $\d$ is the induced distance in the center-unstable leaf.
It follows by compactness of $\Lambda$ and continuity of the
$cu$-leaves, that there exist a positive number $\delta>0$ such that
\[\d\left(\partial\www{cu}{x}{r_2},\partial\www{cu}{x}{r}
\right)>\delta.\]

Now let us take $r_1<r_2$ as in the item 1 in the previous Lemma.
Since that for every $n\geq0$,
$f\sp{-n}(\www{cu}{x}{r_1})\subset\www{cu}{f\sp{-n}(x)}{r_2}$ we
have in particular that
\[
\begin{aligned}
\d\left(\partial f\sp{-n}(\www{cu}{x}{r_1}),\partial
\www{cu}{f\sp{-n}(x)}{r}\right)&
\geq\d\left(\partial\www{cu}{f\sp{-n}(x)}{r_2},\partial\www{cu}{f\sp{
-n}(x)}{r}\right)\\&>\delta.
\end{aligned}\]

If we take $r_0<r_1$, and $\e$ small enough such that $r_0-\e>0$, we
know from the item 2 in the previous Lemma, that there exit
$N=N(r_0-\e,r_1)$ such that
\[f\sp{-n}(\www{cu}{x}{r_1})\subset \www{cu}{f\sp{-n}(x)}{r_{0}-\e}
\subset(\www{cu}{f\sp{-n}(x)}{r_{0}})\sp\circ\]
for every $n\geq N$, and this implies the first item.

On the other hand, we can define the function
\[\rho(x)=\d\left(f\sp{-N}(x),\partial
f\sp{-N}(\www{cu}{x}{r_1})\right)>0\] that is
continuous in $\Lambda$. Let $\rho_0=\inf_{x\in\Lambda}\rho(x)$. Then
for every $x$ there exist a neighborhood $U_x$ and a radius $r_x$
such that for $y\in U_x$
\[\d\left(\www{cu}{f\sp{-N}(y)}{r_x},\partial
f\sp{-N}(\www{cu}{y}{r_1})\right)>\frac{\rho_0}{2}.\]
So by compactness, there exist a $r_{-1}$ such that
\[\d\left(\www{cu}{f\sp{-N}(x)}{r_{-1}},\partial
f\sp{-N}(\www{cu}{x}{r_1})\right)>\frac{\rho_0}{2}\]
in $\Lambda$ and in particular, $\www{cu}{f\sp{-N}(x)}{r_{-1}}\subset
f\sp{-N}(\www{cu}{x}{r_1})$, that is the first inclusion of the second
item.

For the second inclusion of the item 2 and the item 3, we must first
construct the sets  $B\sp{cu}(x)$. For this, let us take
\[B(x)=\bpl z\in \www{cu}{x}{r}\, :\,
\d\left(z,\partial\www{cu}{x}{r}\right)
\geq\delta/2\bpr.\]
Then it is clear that $f\sp{-n}(\www{cu}{x}{r_1})\subset
(B(f\sp{-n}(x)))\sp\circ$ for all $n\geq0$, thus we define
$B\sp{cu}(f\sp{-N}(x))$ has the connected component that contain
$f\sp{-N}(\www{cu}{x}{r_1})$ of the intersection
\[\www{cu}{f\sp{-N}(x)}{r_{0}-\e})\cap
f\sp{-1}(B(f\sp{-(N-1)}(x)))\cap\ldots\cap
f\sp{-N}(B(x)).\]
By construction the set $(B\sp{cu}(f\sp{-N}(x)))\sp\circ$ contain
$f\sp{-N}(\www{cu}{x}{r_1})$ and it follows the third item, that
conclude the proof of this Lemma.
\end{proof}

\begin{rem}\label{r0_peq}
It is important to recall that the election of the constant
$r_0<r_1$ is arbitrary, once we take $r_1<r_2$.
\end{rem}

\section{Holomorphic center-unstable submanifolds}

This section is devote to prove the following Theorem.

\begin{teo}\label{teo.cueq}
If the $cu$-leaf are dynamically defined, then they are holomorphic
submanifolds of $\C\sp n$.
\end{teo}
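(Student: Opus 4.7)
Fix $x_0\in\Lambda$ and build the Hadamard--Perron family $\{f_m\}_{m\in\Z}$ along the orbit $x_m=f^m(x_0)$ via the construction of Section~\ref{section3}. In these coordinates each $f_m$ is holomorphic on a uniform polydisc $\Delta_n(0,R)$, and the local $cu$-leaf at $x_m$ is the graph of the fixed point $\varphi^+_m$ of the graph transform of Theorem~\ref{h_p_thm}; it therefore suffices to show each $\varphi^+_m$ is holomorphic on a polydisc of uniform radius. The plan is to revisit the scheme of Proposition~\ref{pr.holm}: isolate a closed subspace of \emph{holomorphic} Lipschitz sequences inside $C^0_\gamma$, check it is invariant under some iterate of the graph transform, and use the limit identity~\eqref{limit-eq} to conclude $\varphi^+_*$ lies in it. In Proposition~\ref{pr.holm} the invariance was bought cheaply from the hypothesis $\mu>1$; here $\mu$ may be arbitrarily small and the dynamically defined hypothesis must supply a substitute via Proposition~\ref{pr.cueq}, whose overlapping property is precisely a geometric, multi-step analogue of the missing pointwise expansion.

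To execute this, apply Proposition~\ref{pr.cueq} to obtain radii $r_{-1}<r_0<r_1<r_2<r$, an integer $N$, and topological balls $B^{cu}(x)$, all uniform in $x\in\Lambda$. Transferring these through the linear charts $L_{x_m}$ yields sets $\widehat{B}_m$ in the $F$-coordinate nested between two polydiscs of radii $r_{-1}'<r_0'$, with the property that the $N$-fold composition $F^N_m=f_{m-1}\circ\cdots\circ f_{m-N}$ sends the part of $\graph\varphi^+_{m-N}$ lying over $\widehat{B}_{m-N}$ onto a graph covering the polydisc of radius $r_1'$ in the $F$-coordinate. Define $\mathcal{O}^0_\gamma(r_1')\subset C^0_\gamma$ as the set of sequences whose components are holomorphic on some neighborhood of the polydisc of radius $r_1'$; exactly as in the proof of Proposition~\ref{pr.holm}, this set is closed because $d_*$ induces uniform convergence on compact sets and Weierstrass' theorem preserves holomorphy under such limits. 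Granting that the $N$-step graph transform preserves $\mathcal{O}^0_\gamma(r_1')$, start from the constant sequence $\varphi^{(0)}_*\equiv 0$ (tautologically holomorphic and Lipschitz, hence in $\mathcal{O}^0_\gamma(r_1')$); by~\eqref{limit-eq}, $f^{jN}\varphi^{(0)}_*\to\varphi^+_*$ in $d_*$ as $j\to\infty$, and closedness forces $\varphi^+_*\in\mathcal{O}^0_\gamma(r_1')$, so each $\varphi^+_m$, and hence each $cu$-leaf, is holomorphic. Since $x_0$ was arbitrary and all radii and $N$ are uniform in $\Lambda$, Theorem~\ref{teo.cueq} follows.

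The one delicate step is the invariance claim. For $\varphi_*\in\mathcal{O}^0_\gamma(r_1')$ write the composition $F^N_m$ on $\graph\varphi_{m-N}$ in the Hadamard--Perron decomposition $(G^{(N)}_{m,\varphi},\,F^{(N)}_{m,\varphi})$, the $N$-step analogue of $G^m_\varphi,F^m_\varphi$ from Section~\ref{sketch.proof}. In Proposition~\ref{pr.holm} the pointwise inequality $\|G^m_\varphi(x)\|\ge\mu_0\|x\|$ gave the domain inclusion needed to holomorphically invert $G^m_\varphi$; here the overlapping property of the previous step provides the replacement statement that the polydisc of radius $r_1'$ is contained in $G^{(N)}_{m,\varphi}(\widehat{B}_{m-N})$, so $(G^{(N)}_{m,\varphi})^{-1}$ is well defined on a neighborhood of that polydisc, with image inside $\widehat{B}_{m-N}\subset\Delta_n(0,R)$ where $\varphi_{m-N}$ and the $f_{m-j}$ are holomorphic; consequently $(f^N\varphi_*)_m=F^{(N)}_{m,\varphi}\circ(G^{(N)}_{m,\varphi})^{-1}$ is holomorphic on the required polydisc. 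I expect the real work to be here: turning the intrinsic, geometric overlapping information of Proposition~\ref{pr.cueq} into the graph-chart inclusion $r_1'\text{-polydisc}\subset G^{(N)}_{m,\varphi}(\widehat{B}_{m-N})$ uniformly in $\varphi_*\in\mathcal{O}^0_\gamma(r_1')$ and $m\in\Z$. This requires a careful use of the uniform Lipschitz bound $\gamma$ on elements of $\mathcal{O}^0_\gamma(r_1')$ to keep their graphs inside the uniform holomorphy polydisc $\Delta_n(0,R)$ and to ensure that the comparison between the geometric $cu$-leaf $\graph\varphi^+_{m-N}$ used in Proposition~\ref{pr.cueq} and the arbitrary graph $\graph\varphi_{m-N}$ does not spoil the projection inclusion.
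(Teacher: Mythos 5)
Your architecture is exactly the paper's: Proposition~\ref{pr.cueq} converts ``dynamically defined'' into the overlapping property, one then shows that the set of Lipschitz sequences holomorphic near a uniform polydisc is closed under $d_*$ and invariant under the $N$-step graph transform $(g_k)_*$, and the limit identity~\eqref{limit-eq} applied to a holomorphic seed forces $\varphi^+_*$ into that set. So the comparison here is not about route but about completeness, and there is a genuine gap: the entire content of the paper's Proposition~\ref{ws_holm} is the invariance claim that you introduce with ``granting that,'' and your closing paragraph, which is meant to sketch it, does not identify the estimate that actually makes it work. The overlapping property is a statement about the invariant leaf $\graph\varphi^+_{m-N}$ alone; to use it for an \emph{arbitrary} $\varphi$ in the Lipschitz class you must control, at every one of the $N$ intermediate steps, the distance between the image of $\graph\varphi$ and the image of $\graph\varphi^+$. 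The tool for this is the fiberwise contraction
\[\|f_m(x,\varphi(x))-f_m(x,\phi(x))\|\le(\lambda+2\delta)\,\|\varphi(x)-\phi(x)\|,\]
with $\lambda+2\delta<1$ (the paper's Claim~1); this is available because one may assume $\mu\le1$ (otherwise Proposition~\ref{pr.holm} already applies), and then the Hadamard--Perron bound~\eqref{cota-delta} gives $\delta<(1-\lambda)/2$. Combined with radius choices of the form $2\gamma r_2<(R-r)/2$ and $2\gamma r_0<(r_1-r_0)/2$, this contraction is what keeps all $N$ intermediate images of $\graph\varphi$ within $(R-r)/2$ of the leaf, hence inside the holomorphy polydisc $\Delta_n(0,R)$, and what shows that $\widetilde{G}^{0}_\varphi(U_0)$ still covers $\Delta_l(0,r_0)$ so that $(\widetilde{G}^{0}_\varphi)^{-1}$ is holomorphic where you need it.

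Your appeal to ``a careful use of the uniform Lipschitz bound $\gamma$'' does not substitute for this: the bound $\gamma$ only controls the \emph{initial} vertical separation $\|\varphi(x)-\varphi^+_0(x)\|\le2\gamma\|x\|$. Without non-expansion in the vertical direction that separation could grow along the $N$ compositions, the graph of $\varphi$ could exit the region where the $f_{m-j}$ are holomorphic, and the covering inclusion $\Delta_l(0,r_1')\subset G^{(N)}_{m,\varphi}(\widehat{B}_{m-N})$ could fail for $\varphi\neq\varphi^+$. Supplying Claim~1 and the resulting telescoping estimates (the paper's Claims~3 and~4) closes the gap; everything else in your outline, including the closedness of $\mathcal{O}^0_\gamma$ and the choice of a holomorphic seed, matches the paper and is fine.
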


For this, we use the notation and the technique introduced in the
Section \ref{section3}. The principal key is the overlapping property
in the $cu$-leaf. We rewrite the definition of overlapping property in
Hadamard-Perron notation.

Let $f=\{f_m\}_{m\in\Z}$ and $\varphi\sp+=\{\varphi\sp+_m\}_{m\in\Z}$
the families of function as in the Theorem \ref{h_p_thm}\,. We recall
that the graph of the functions $\varphi\sp+_m$ are, by some local
change of chart, the center-unstable manifolds given in the Theorem
\ref{hol0}.

\begin{defi}We say that the family $\varphi\sp+$ as the overlapping
property for $r>0$ if: for every $r_2<r$ there
exist $r_{-1}<r_0<r_1<r_2$, an integer $N=N(r_0,r_1)$, and a family of
closed topological balls $U_m$ with $\Delta_l(0,r_{-1})\subset
\overline{U_m}\subset \Delta_l(0,r_0)$, such that if we denote
$D\sp+_m=\graph(\varphi_m\sp+|{U_m})$ and
$W\sp+_m(r')=\graph(\varphi\sp+_m|{\Delta_l(0,r')})$ are satisfied
the following properties:
\begin{itemize}
\item[{\it a)}] For every $n\geq N$ and $m\in\Z$, hold that
$f\sp{-1}_{m-n}\circ\cdots\circ f\sp{-1}_{m-1}(W\sp+_m(r_1))\subset
(W\sp+_m(r_0))\sp\circ$,
\item[{\it b)}] $W\sp+_{m-N}(r_{-1})\subset
f\sp{-1}_{m-N}\circ\cdots\circ
f\sp{-1}_{m-1}(W\sp+_m(r_1)) \subset (D_{m-N}\sp+)\sp\circ$ for every
$m\in\Z$,
\item[{\it c)}] For every $0\leq k\leq N-1$ we have
\[f_{m-(N-(k-1))}\circ\cdots\circ f_{m-(N-1)}\circ
f_{m-N}(D_{m-N}\sp+)\subset W\sp+_{m-(N-k)}(r)\sp\circ\] for every
$m\in\Z$,.
\end{itemize}
\end{defi}

From the Proposition \ref{pr.cueq}, $cu$-dynamically defined implies
the overlapping property, hence the proof the Theorem \ref{teo.cueq}
it follows directly from the following Proposition.

\begin{pr}\label{ws_holm}
Under the hypothesis of Theorem \ref{h_p_thm}, suppose that the
fo\-llo\-wing additional conditions hold:
\begin{enumerate}
\item There exists $R>0$ such that, for each $m\in\Z$, the map
$f_m$ is holomorphic in some neighborhood of the closed polydisc
$\Delta(0,R)\subset\C^n$,
\item For every $0<r<R$, the family $\varphi\sp+$ as the
overlapping property for $r>0$
\end{enumerate}
then there exit $R'<R$ such that $\varphi\sp+_m$ is holomorphic in
$\Delta_l(0,R')$.
\end{pr}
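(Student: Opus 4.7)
My plan is to adapt the strategy of Proposition \ref{pr.holm}. I would introduce a closed subspace $\mathcal{O} \subset C^0_\gamma$ of sequences holomorphic on a prescribed polydisc, show that the $N$-th iterate $f^N$ of the graph transform preserves $\mathcal{O}$ in a $d_*$-neighborhood of $\varphi^+_*$, and then conclude via (\ref{limit-eq}) that $\varphi^+_*$ belongs to $\mathcal{O}$.

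Concretely, set $R' = r_0$ and let $\mathcal{O}$ be the set of sequences $\psi_* \in C^0_\gamma$ such that each $\psi_m$ is holomorphic on some open neighborhood of $\overline{\Delta_l(0, r_0)}$. Closedness of $\mathcal{O}$ is proved as in Proposition \ref{pr.holm}: $d_*$-convergence implies uniform convergence on $\overline{\Delta_l(0, r_0)}$ (away from the origin the factor $\|x\|$ in the denominator of $d$ is bounded below, and near the origin the Lipschitz bound together with $\psi(0) = 0$ controls the difference), so Weierstrass yields holomorphy of the limit.

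The essential new ingredient is the $f^N$-invariance at $\varphi^+_*$. At that point, items (c) and (b) of the overlapping property provide exactly what is needed: (c) keeps every intermediate iterate $f_{m-N+k-1} \circ \cdots \circ f_{m-N}(D^+_{m-N})$ inside $W^+_{m-(N-k)}(r)^\circ$, hence inside the polydisc $\Delta_n(0, R)$ where $f_{m-(N-k)}$ is holomorphic by hypothesis (1); (b) then provides that the $N$-step image $f_{m-1} \circ \cdots \circ f_{m-N}(D^+_{m-N})$ covers $W^+_m(r_1)$, which is a graph over $\Delta_l(0, r_1) \supset \overline{\Delta_l(0, r_0)}$. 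By the inverse function theorem for holomorphic maps, the composition of $G^j_{\varphi^+}$'s from level $m-N$ to level $m$ is a local biholomorphism onto a set containing $\overline{\Delta_l(0, r_0)}$, so $f^N \varphi^+_*$ is holomorphic on a neighborhood of $\overline{\Delta_l(0, r_0)}$. Since $F^m_\psi$ and $G^m_\psi$ depend continuously on $\psi$ in $d_*$, the strict inclusions in (b) and (c) persist on a $d_*$-neighborhood $\mathcal{V}$ of $\varphi^+_*$, giving $f^N(\mathcal{O} \cap \mathcal{V}) \subset \mathcal{O}$.

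To close the argument I would start from a holomorphic sequence $\psi^{(0)}_* \in \mathcal{O}$ (for instance $\psi^{(0)}_m \equiv 0$), iterate $f$ one step at a time until the orbit enters $\mathcal{O} \cap \mathcal{V}$, and then use $f^N$-invariance to keep the orbit inside $\mathcal{O} \cap \mathcal{V}$; by (\ref{limit-eq}) the iterates converge in $d_*$ to $\varphi^+_*$, and closedness of $\mathcal{O}$ gives $\varphi^+_* \in \mathcal{O}$, hence holomorphy of $\varphi^+_m$ on $\Delta_l(0, r_0) = \Delta_l(0, R')$. The principal obstacle is ensuring that the initial segment of the orbit stays holomorphic on the right fixed domain: without the hypothesis $\mu > 1$, single-step graph transforms may shrink the holomorphy domain arbitrarily, so one must either carefully track the chain of domain containments during the initial iterations or, alternatively, replace $\psi^{(0)}$ by a more flexible choice of holomorphic starting data already close to $\varphi^+_*$ in $d_*$ and apply the $N$-step invariance from the outset together with a normal families argument for the limit.
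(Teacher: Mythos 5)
Your overall architecture --- a closed subspace of holomorphic Lipschitz sequences, invariance under the $N$-fold graph transform $g_k=f_{m+kN+N-1}\circ\cdots\circ f_{m+kN}$ extracted from the overlapping property, then passage to the limit via (\ref{limit-eq}) and a Weierstrass argument --- is the same as the paper's. But there is a genuine gap exactly at the point you yourself flag as ``the principal obstacle'': you only establish invariance of $\mathcal{O}$ on a $d_*$-neighborhood $\mathcal{V}$ of $\varphi^+_*$, and you have no way to start the iteration with holomorphic data already inside $\mathcal{V}$, nor to guarantee that the initial single-step iterates remain holomorphic on a fixed domain until the orbit enters $\mathcal{V}$. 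Neither of your suggested remedies works as stated: when $\mu\le 1$ the single-step transforms can indeed shrink the holomorphy domain without control, and ``holomorphic starting data already $d_*$-close to $\varphi^+_*$'' is not available a priori, since at this stage $\varphi^+_*$ is only known to be Lipschitz, and there is no reason for $\mathcal{V}$ to contain any holomorphic element at all.

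The paper closes this gap by proving the domain control for \emph{every} $\varphi\in C^0_\gamma(\C^l)$ holomorphic near $U_0$, not merely for those near $\varphi^+_0$; the mechanism is quantitative rather than a continuity-of-strict-inclusions argument. Since $\varphi$ and $\varphi^+_0$ are both $\gamma$-Lipschitz and vanish at $0$, one has $||\varphi(x)-\varphi^+_0(x)||\le 2\gamma||x||\le 2\gamma r_2<(R-r)/2$, and the fiber contraction $||f_j(x,\varphi(x))-f_j(x,\phi(x))||\le\lambda_0||\varphi(x)-\phi(x)||$ with $\lambda_0=\lambda+2\delta<1$ (Claim 1 of the paper; the inequality $\lambda_0<1$ requires the preliminary reduction to the case $\mu\le1$, the case $\mu>1$ being already covered by Proposition \ref{pr.holm}) propagates this bound along all $N$ intermediate steps. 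Hence each intermediate image $G_{k}\circ\cdots\circ G^{0}_{\varphi}(x)$ stays within $(R-r)/2$ of the corresponding point of the invariant graph, which item (c) of the overlapping property keeps inside $\Delta_l(0,r)$; so all intermediate iterates remain in the polydisc of holomorphy $\Delta_n(0,R)$, and item (b) together with the choice $2\gamma r_0\lambda_0^N<(r_1-r_0)/2$ gives surjectivity of $\widetilde{G}^0_\varphi$ onto $\Delta_l(0,r_0)\supset U_N$. With invariance on the whole space the initialization problem disappears (one may start from the zero sequence or any linear map), and the limit lands in the closed set of sequences holomorphic near the $U_m$'s. Note finally that the invariant domains are the topological balls $U_m$, pinched between $\Delta_l(0,r_{-1})$ and $\Delta_l(0,r_0)$, rather than the fixed polydisc $\Delta_l(0,r_0)$; this is why the paper's radius is $R'=r_{-1}$ rather than your $r_0$.
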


Now, we want to highlight the main difference of the Proposition
\ref{pr.holm} with the Proposition \ref{ws_holm}. In the first of
them, it is assumed that $F$ is an unstable direction. Here we only
assume that the center-unstable manifold are dynamically defined.

However, the states of the proof has many similarities. The goal is to
show that using the graph transform operator it is possible to prove
that the $cu$-leaves are limits of the graph of uniformly bounded
holomorphic function, and therefore it is also holomorphic. The main
difficulty is to show that only using the dynamically defined
property, is arrange to recover, after some iterate, the overlapping
property of the graph transform operator.

\begin{proof}[{\bf Proof of Proposition \ref{ws_holm}}]We use the same
notation of the proof of Proposition
\ref{pr.holm}. Firstly let us take $r_2<r<R$ with
\begin{equation}\label{r2}
2\gamma r_2<\frac{R-r}{2}.
\end{equation}
We recall that from the Remark \ref{r0_peq}, we can take $r_0<r_1$
small enough such that
\begin{equation}\label{r0}
2\gamma r_0<\frac{r_1-r_0}{2}.
\end{equation}
The proof goes through a series of claims.

\noindent
{\bf Claim 1:}\, {\it There exists $\lambda_0<1$, such that for every
$\varphi,\, \phi\in C\sp0_\gamma(\C\sp l)$, $m\in\Z$ and $x\in\C\sp l$
we have the inequality}
\[||f_m(x,\varphi(x))-f_m(x,\phi(x))||\leq\lambda_0||
\varphi(x)-\phi(x)||.\]
\begin{proof}[{\bf Proof of Claim 1}] We recall that
$f_m(x,\varphi(x))=(G\sp m_\varphi(x),F\sp m_\varphi(x))$, then we
have
\[
\begin{aligned}
||f_m(x,\varphi(x))&-f_m(x,\phi(x))||\\
&\, \, \, \, \, \, \, \, \, \, \, \, \, \, \, \, \, \, \, =||(G\sp
m_\varphi(x),F\sp
m_\varphi(x))-(G\sp m_\phi(x),F\sp m_\phi(x))||\\
&\, \, \, \, \, \, \, \, \, \, \, \, \, \, \, \, \, \, \, \leq||G\sp
m_\varphi(x)-G\sp m_\phi(x)||+||F\sp m_\varphi(x)-F\sp
m_\phi(x)||\\
&\, \, \, \, \, \, \, \, \, \, \, \, \, \, \, \, \, \, \,
\leq||(A_mx+\alpha_m(x,\varphi(x)))-(A_mx+\alpha_m(x,\phi(x)))||\\
&\, \, \, \, \, \, \, \, \, \, \, \, \, \, \, \, \, \, \, \, \, \,
+||(B_m\varphi(x)+\beta_m(x,\varphi(x)))-(B_m\phi(x)+\beta_m(x,
\phi(x)))||\\
&\, \, \, \, \, \, \, \, \, \, \, \, \, \, \, \, \, \, \,
\leq||\alpha_m(x,\varphi(x))-\alpha_m(x,
\phi(x))||+||B_m(\varphi(x)-\phi(x))||\\
&\, \, \, \, \, \, \, \, \, \, \, \, \, \, \, \, \, \, \, \, \, \,
+||\beta_m(x,\varphi(x))-\beta_m(x,\phi(x))||\\
&\, \, \, \, \, \, \, \, \, \, \, \, \, \, \, \, \, \, \,
\leq(\lambda+2\delta)||\varphi(x)-\phi(x)||,
\end{aligned}
\]
then let us take $\lambda_0=(\lambda+2\delta)$, and we will prove that
$\lambda_0<1$. Firstly note that we can assume that $\mu\leq1$, if not
by Proposition \ref{pr.holm} it follows that $\varphi\sp+_m$ is
holomorphic in a polydisc $\Delta_l(0,R')$ for some $R'<R$, that is
we want to prove. On other hand, by inequality (\ref{cota-delta}) in
the Theorem \ref{h_p_thm}
\[\delta<\frac{\mu-\lambda}{\gamma+\gamma\sp{-1}+2},\]
and this is less than $(1-\lambda)/2<1$. This end the
proof of the claim.
\end{proof}

In that follows, we fix $m\in \Z$ and define
\begin{equation}\label{iterate}g_k=f_{(m+kN)+(N-1)}\circ
f_{(m+kN)+(N-2)}\circ\cdots\circ
f_{(m+kN)+1}\circ f_{(m+kN)}.
\end{equation}
Then we can write $g_k$ as the form
\[g_k(x,y)=(C_kx+c_k(x,y),D_ky+d_k(x,y)),\]
where
\[C_k=A_{(m+kN)+(N-1)}\cdot A_{(m+kN)+(N-2)}\cdot\ldots\cdot
A_{(m+kN)+1}\cdot A_{(m+kN)}\]
and
\[D_k=B_{(m+kN)+(N-1)}\cdot B_{(m+kN)+(N-2)}\cdot\ldots\cdot
B_{(m+kN)+1}\cdot B_{(m+kN)}.\]

We recall that graph transform operator $(f_m)_*$ of a Lipschitz
function $\varphi$
is defined by the equation
\[(x',(f_m)_*\varphi(x'))=f_m(x,\varphi(x))=(A_mx+\alpha_m(x,
\varphi(x)),B_m\varphi(x)+\beta_m(x,\varphi(x))).\]
It is possible to prove that the map $G^m_\varphi:\C^l\rightarrow\C^l$
given by
\[G^m_\varphi(x)=A_mx+\alpha_m(x,\varphi(x)),\] is a bijection, and
that if we define
$F^m_\varphi:\C^{l}\rightarrow\C^{l}$ by
\[F^m_\varphi(x)=B_m\varphi(x)+\beta_m(x,\varphi(x)),\]
then the graph transform operator $(f_m)_*\varphi$, is given by the
expression
\[(f_m)_*\varphi(x)=F^m_\varphi\circ(G^m_\varphi)^{-1}(x).\]
Similarly, we denote by
\[\widetilde{G}^k_\varphi(x)=C_kx+c_k(x,\varphi(x)),\] and
\[\widetilde{F}^k_\varphi(x)=D_k\varphi(x)+d_k(x,\varphi(x)),\]
the coordinates maps related with $g_k$ and $\varphi$.

For a fixed $k$ and $\varphi$, we denote:
\begin{enumerate}
\item $\varphi_1=(f_{m+kN})_*\varphi$,
\item $\varphi_{j+1}=(f_{m+kN+j})_*\varphi_{j}$, for every
$j=1,\ldots,N-2$,
\item $G_j=G\sp{m+kN+j}_{\varphi_{j}}$, for every $j=1,\ldots,N-1$.
\end{enumerate}

\noindent
{\bf Claim 2:}\, {\it We have that}
\[\widetilde{G}^k_\varphi=G_{N-1}\circ G_{N-2}\circ\ldots G_{1}\circ
G\sp{m+kN}_{\varphi},\]
{\it and the graph transform operator of $g_k$, given by equality
\ref{iterate}, is equal to}
\[(g_k)_*=(f_{(m+kN)+(N-1)})_*(f_{(m+kN)+(N-2)})_*\ldots
(f_{(m+kN)+1})_*(f_{(m+kN)})_*.\]
\begin{proof}
This is elementary, and the proof is left to the reader.
\end{proof}
As  a consequence of the previous claim, we conclude that
$\widetilde{G}\sp k_\varphi$ is a bijection of $\C\sp l$, and that
the graph transform operator related with $g_k$ is given by the
equality
\[(g_k)_*\varphi(x)=\widetilde{F}^k_\varphi\circ(\widetilde{G}
^k_\varphi)^{-1}(x).\]
In that follows by simplicity, we will work with $m=k=0$, and
the function $g_0=f_{N-1}\circ\cdots\circ f_0$, but all the following
results are true for any $m$ and $k$.\\

\noindent
{\bf Claim 3:}\, {\it If $\varphi\in C\sp0_\gamma(\C\sp l)$ is
holomorphic in some neighborhood of $U\sp+_{0}$, then the function
$\widetilde{G}\sp0_\varphi$ is holomorphic in some neighborhood of
$U\sp+_{0}$.}
\begin{proof}
From the inequality (\ref{r2}), for any point $x$ in the closed
polydisc $\Delta_l(0,r_2)$, we have that
\[||\varphi_0\sp+(x)-\varphi(x)||\leq 2\gamma||x||\leq2\gamma
r_2<\frac{R-r}{2}.\]
We recall that each map $f_j$ is holomorphic in the closed polydisc
$\Delta(0,R)$. From the item (c), it follows that
$f_0(D\sp+_0)\subset (W\sp+_1(r))\sp\circ$ and we conclude that
$G\sp0_{\varphi\sp+_0}(U_0)\subset\Delta_l(0,r)\sp\circ$. It follows
from the Claim 1 that for every $x\in U_0$
\[
\begin{aligned}
||G\sp0_{\varphi\sp+_0}(x)-G\sp0_\varphi(x)||&\leq||f_0(x,
\varphi\sp+_0(x))-f_0(x,
\varphi(x))||\\
&\leq\lambda_0||\varphi_0\sp+(x)-\varphi(x)||\\
&<\lambda_0
\frac{R-r}{2},
\end{aligned}\]
this implies that
\[||G\sp0_\varphi(x)||\leq
||G\sp0_{\varphi\sp+_0}(x)-G\sp0_\varphi(x)||+||G\sp0_{\varphi\sp+_0}
(x)||<\frac{R-r}{2}+r=\frac{R+r}{2}<R.\]
As before, we denote $\varphi_1=(f_{0})_*\varphi$,
$\varphi_{j+1}=(f_{j})_*\varphi_{j}$, for every $j=1,\ldots,N-2$; and
$G_j=G\sp{j}_{\varphi_{j}}$, for every $j=1,\ldots,N-1$. Again by item
(c), for every $1\leq k\leq N-1$ we have
$f_k\circ\cdots\circ f_0(D\sp+_0)\subset (W\sp+_{k+1}(r))\sp\circ$,
it follows that $G\sp k_{\varphi\sp+_k}\circ\cdots\circ G\sp
0_{\varphi\sp+_0}(U_0)\subset \Delta_l(0,r)\sp\circ$. We use the
following notation:
\[x\sp+_k=G\sp k_{\varphi\sp+_k}\circ\cdots\circ
G\sp0_{\varphi\sp+_0}(x)\, \, \, \, \textrm{ and }\, \, \, \,
x_k=G_{k}\circ\ldots\circ G\sp{0}_{\varphi}(x).\]
Then as before, we conclude that for every $x\in U_0$
\[
\begin{aligned}
||x\sp+_k-x_k||&\leq||f_k(x\sp+_{k-1},\varphi\sp+_k(x\sp+_{k-1}
))-f_k(x_{k-1},\varphi_k(x_{k-1}))||\\
&\leq \lambda_0||\varphi\sp+_k(x\sp+_{k-1}
)-\varphi_k(x_{k-1})||\\
&\leq\lambda_0||f_{k-1}(x\sp+_{k-2},\varphi\sp+_{k-1}(x\sp+_{k-2}
))-f_{k-1}(x_{k-2},\varphi_{k-1}(x_{k-2}))||\\
&\, \, \, \, \, \, \, \, \, \, \, \, \, \, \, \, \, \, \, \, \, \, \,
\, \, \, \, \, \, \, \, \, \, \, \, \, \vdots\\
&\leq\lambda_0\sp k||\varphi_0\sp+(x)-\varphi(x)||\\
&<\lambda_0\sp k\frac{R-r}{2}
\end{aligned}
\]
then is follows that
\[||G_{k}\circ\ldots\circ
G\sp{0}_{\varphi}(x)||\leq||x\sp+_k-x_k||+||x\sp+_k||<\frac{R-r}{2}
+r=\frac{R+r}{2}<R.\]

To end, since that $\varphi$ is holomorphic in some neighborhood of
$U_0$, $f_0$ is holomorphic in $\Delta_l(0,R)$ and
$\overline{\im(G\sp{0}_{\varphi}(U_0))}\subset\Delta_l(0,R)\sp\circ$
it follows that the map $\varphi_1$ is holomorphic in some
neighborhood of $\im(G\sp{0}_{\varphi}(U_0))$. Similarly, since that
$\overline{G_1(\im(G\sp{0}_{\varphi}(U_0)))}\subset\Delta_l(0,
R)\sp\circ$, and $f_1$ is holomorphic in this domain, we conclude that
$\varphi_2$ is holomorphic in
$\im(G_1(\im(G\sp{0}_{\varphi}(U_0))))$, and so on. This implies that
the map $\widetilde{G}\sp0_\varphi=G_{N-1}\circ\cdots G_1\circ
G\sp0_\varphi$ is holomorphic in $U_0$ and
$\overline{\im(\widetilde{G}\sp0_\varphi(U_0))}\subset
\Delta_l(0,R)\sp\circ$.
\end{proof}

\noindent
{\bf Claim 4:}\, {\it The image of $U_0$ from the map
$\widetilde{G}\sp0_\varphi$, contain the polydisc $\Delta_l(0,r_0)$.}
\begin{proof}
From the item (b), we have that $W\sp+_N(r_1)\subset
(g_0(D_0\sp+))\sp\circ$, and we recall that $g_0(D_0\sp+)$ is a
topological ball that contain $0$. Now for a point $x\in
pr_1(g_0\sp{-1}(W\sp+_N(r_1)))\subset\Delta_l(0,r_0)$ we have that
$||G\sp{N-1}_{\varphi\sp+_{N-1}}
\circ\cdots\circ G\sp0_{\varphi\sp+_0}(x)||\leq r_1$
and that
\[
\begin{aligned}
||G\sp{N-1}_{\varphi\sp+_{N-1}}\circ\cdots\circ
G\sp0_{\varphi\sp+_0}(x)
-\widetilde{G}\sp0_\varphi(x)||&\leq||g_0(x,\varphi\sp+_0(x))-g_0(x,
\varphi(x))||\\
&<\lambda_0\sp N2\gamma r_0\\
&<\frac{r_1-r_0}{2},
\end{aligned}
\]and this
last inequality comes from the inequality (\ref{r0}). This
conclude the proof of the claim.
\end{proof}
From the previous claim, in particular we have that $U_N\subset
\im(g_0(D_0\sp+))$. Since $\widetilde{F}\sp0_\varphi$ be a
holomorphic map in some neighborhood of $U_0$, and
$(\widetilde{G}\sp0_\varphi)\sp{-1}$ is holomorphic in
$\Delta_l(0,r_0)\supset U_N$ (and this because
$\widetilde{G}\sp0_\varphi$ is holomorphic and injective), it follows
that the map
$\varphi'(x)=(g_0)_*\varphi(x)=\widetilde{F}\sp0_\varphi\circ
(\widetilde{G}\sp0_\varphi)\sp{-1}(x)$ is holomorphic in $U_N$.

We conclude that for any $m$, the action of the graph transform
operator associated
with the family $g=\bpl g_k\bpr_{k\in\Z}$ defined as in the equation
(\ref{iterate}), leaves invariant the set of sequences of Lipschitz
functions that in each level is holomorphic in some neighborhood of
the sets $U$'s; and note that this set contain the linear maps.
Passing to limit, we conclude that each $\varphi\sp+_m$ is
holomorphic in the set $U_m\supset\Delta_l(0,r_{-1})$. Thus taking
$R'=r_{-1}$, we completed the proof of the Proposition.
\end{proof}

\section{Forward Expansiveness in the center-unstable leaf}

In this section we will prove the following Theorem.

\begin{teo}\label{teo6.1}
If $f$ is $cu$-forward expansive then the $cu$-leaf are dynamically
defined.
\end{teo}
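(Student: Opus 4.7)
By Lemma~\ref{equiv_cond_dyn_def}, being dynamically defined is equivalent to two conditions: (1) sufficiently small $cu$-leaves have bounded backward iterates inside a fixed larger $cu$-leaf, and (2) backward iterates of any $cu$-leaf are eventually absorbed into arbitrarily small ones. I would prove both by the same contradiction-plus-compactness argument, using $cu$-forward expansivity to rule out the formation in the limit of a nontrivial pair lying on a common $cu$-leaf.

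For (1), fix $r_1<r$ small enough that $\www{cu}{z}{r_1}$ has ambient diameter below $c/2$ for every $z\in\Lambda$, where $c$ is the expansivity constant. Assuming (1) fails, choose $r_0^{(k)}\downarrow 0$, $x_k\in\Lambda$, and $y_k\in\www{cu}{x_k}{r_0^{(k)}}$ for which the minimal first escape time $n_k$ satisfies $f^{-j}(y_k)\in\www{cu}{f^{-j}(x_k)}{r_1}$ for $0\le j<n_k$ while $f^{-n_k}(y_k)\notin\www{cu}{f^{-n_k}(x_k)}{r_1}$. Let $\tau_k\in(0,n_k]$ be the first iterate at which the intrinsic $cu$-leaf coordinate distance between the orbits first reaches $r_1/2$. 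Because the starting distance tends to $0$ and per-step growth is bounded by the single-step local $f^{-1}$-invariance in Theorem~\ref{hol0}(b), $\tau_k\to\infty$. With $\tilde x_k:=f^{-\tau_k}(x_k)$ and $\tilde y_k:=f^{-\tau_k}(y_k)$, compactness of $\Lambda$ and continuity of the $cu$-foliation give a subsequence with $\tilde x_k\to x^*\in\Lambda$ and $\tilde y_k\to y^*\in\overline{\www{cu}{x^*}{r_1}}$, $\d(y^*,x^*)>0$. Applying $cu$-forward expansivity at $x^*$ yields $m\ge 1$ with $\d(f^m(y^*),f^m(x^*))>c$, hence $\d(f^{m-\tau_k}(y_k),f^{m-\tau_k}(x_k))>c/2$ for all large $k$ by continuity of $f^m$. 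For such $k$, $m-\tau_k=-p$ with $0<p<n_k$, so minimality of $n_k$ puts $f^{-p}(y_k)$ inside $\www{cu}{f^{-p}(x_k)}{r_1}$, forcing $\d(f^{-p}(y_k),f^{-p}(x_k))<c/2$: contradiction.

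The proof of (2) is structurally identical, once (1) is used to confine the relevant backward iterates. After shrinking $r_1$ if necessary, (1) guarantees that all backward iterates of $\www{cu}{x_k}{r_1}$ lie in some $\www{cu}{\cdot}{r_1'}$ of ambient diameter below $c/2$. A failure of (2) then produces $n_k\to\infty$ and $y_k\in\www{cu}{x_k}{r_1}$ with $\d(f^{-n_k}(y_k),f^{-n_k}(x_k))\ge r_0$; extracting a limit $(x^*,y^*)$ with $y^*\in\overline{\www{cu}{x^*}{r_1'}}$ and $\d(y^*,x^*)\ge r_0>0$, and invoking forward expansivity at $x^*$ again generates a backward iterate separated by more than $c/2$, contradicting the diameter bound from (1). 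The principal technical obstacle throughout is the careful choice of the stopping time $\tau_k$ in (1), and then the verification that the extracted limit $y^*$ lies in a $cu$-leaf of a size where $cu$-forward expansivity genuinely applies; everything else is continuity and bookkeeping based on Theorem~\ref{hol0}.
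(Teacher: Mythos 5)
Your proposal is correct and follows essentially the same route as the paper: reduce to the two conditions of Lemma~\ref{equiv_cond_dyn_def}, and prove each by contradiction, extracting via compactness a limit pair of distinct points on a common $cu$-leaf whose forward orbit stays within the expansivity constant (using minimality of the escape time to confine the earlier iterates), contradicting $cu$-forward expansivity. The only difference is bookkeeping — you stop at the first time the intrinsic separation reaches $r_1/2$, while the paper goes to the full escape time $n_k$ and locates the witness point in an annulus $\overline{\www{cu}{y_k}{\rho r_1}\setminus\www{cu}{y_k}{r_1}}$ — which does not change the substance of the argument.
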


For this purpose, is only necessary to prove that to be satisfied the
equivalents condition in the Lemma \ref{equiv_cond_dyn_def}, which are
proved in the following Propositions.

\begin{pr}
Let $f$ be a forward expansive map in the $cu$-leaves, with
constant of expansiveness $c$. Then for every $r_1<c$ there exist
$r_0<r_1$ such that for all $x\in\Lambda$ and $n\geq0$\[f\sp{-n}
(\www{cu}{x}{r_0})\subset\www{cu}{f\sp{-n}(x)}{r_1}.\]
\end{pr}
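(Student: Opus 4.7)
\emph{The plan is to argue by contradiction.} Choose $r_1<c$ small enough, using uniform continuity of $\phi\sp{cu}$ on the compact set $\Lambda$, so that every $cu$-leaf $\www{cu}{y}{r_1}$ has extrinsic diameter strictly less than $c$. Suppose the claim fails: for each $k\geq 1$ one can find $x_k\in\Lambda$, $n_k\geq 0$ and $z_k\in \www{cu}{x_k}{1/k}$ with $f\sp{-n_k}(z_k)\notin \www{cu}{f\sp{-n_k}(x_k)}{r_1}$. Let $m_k\geq 0$ be the first exit time, i.e.\ the smallest $m$ for which $f\sp{-m}(z_k)\notin \www{cu}{f\sp{-m}(x_k)}{r_1}$. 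Since $z_k$ converges to $x_k$ in the leaf parameter, $m_k\geq 1$ for $k$ large; setting $\tilde x_k=f\sp{-(m_k-1)}(x_k)$ and $\tilde z_k=f\sp{-(m_k-1)}(z_k)$, one has $\tilde z_k\in \www{cu}{\tilde x_k}{r_1}$ while $f\sp{-1}(\tilde z_k)\notin \www{cu}{f\sp{-1}(\tilde x_k)}{r_1}$. Passing to a subsequence, $\tilde x_k\to\tilde x\in\Lambda$ and, by continuity of $\phi\sp{cu}$, $\tilde z_k\to\tilde z=\phi\sp{cu}(\tilde x)(\xi)$ with $|\xi|\leq r_1$.

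\emph{Split into two cases.} If $\tilde z=\tilde x$, the leaf parameters $\xi_k\to 0$; Theorem \ref{hol0}\,(b) provides $r_2>0$ with $f\sp{-1}(\www{cu}{y}{r_2})\subset \www{cu}{f\sp{-1}(y)}{r_1}$ for every $y\in\Lambda$, so $\tilde z_k\in \www{cu}{\tilde x_k}{r_2}$ eventually and $f\sp{-1}(\tilde z_k)\in \www{cu}{f\sp{-1}(\tilde x_k)}{r_1}$, contradicting the choice of $m_k$. If $\tilde z\neq\tilde x$, apply $cu$-forward expansiveness to $\tilde z\in\www{cu}{\tilde x}{r_1}$ to obtain $N\in\N$ with $\d(f\sp N(\tilde z),f\sp N(\tilde x))>c$; the same inequality transfers to $(\tilde z_k,\tilde x_k)$ for $k$ large. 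On the other hand, the minimality of $m_k$ together with the diameter bound on $\www{cu}{\cdot}{r_1}$ gives $\d(f\sp j(\tilde z_k),f\sp j(\tilde x_k))<c$ for every $0\leq j\leq m_k-1$, forcing $N\geq m_k$. Hence $m_k$ is bounded; along a further subsequence $m_k=m$ is constant, so $z_k=f\sp{m-1}(\tilde z_k)\to f\sp{m-1}(\tilde z)$ and $x_k=f\sp{m-1}(\tilde x_k)\to f\sp{m-1}(\tilde x)$, while $\d(z_k,x_k)\to 0$ forces $f\sp{m-1}(\tilde z)=f\sp{m-1}(\tilde x)$, i.e.\ $\tilde z=\tilde x$, contradicting this case.

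\emph{Main obstacle.} The delicate point is the dichotomy on the first exit times $m_k$: if they remain bounded, one immediately collapses $\tilde z$ onto $\tilde x$ via $\d(z_k,x_k)\to 0$ and falls into the first case; if instead $m_k\to\infty$, the collapse fails and the only leverage is the uniform separation time $N$ produced by forward expansiveness, which must be reconciled with the diameter control on $\www{cu}{\cdot}{r_1}$ to force $m_k\leq N$ and return to the bounded regime. The only mildly technical prerequisite is to fix $r_1$ both within the expansive scale and small enough to ensure that $cu$-leaves of radius $r_1$ have extrinsic diameter below $c$, which follows from uniform continuity of $\phi\sp{cu}$ on the compact set $\Lambda$.
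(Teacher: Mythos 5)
Your argument is correct, and it is the same contradiction-by-compactness strategy the paper uses: shrink the inner radius, take a minimal escape time, extract a convergent subsequence of the escaping pairs, and play the minimality (which keeps all intermediate backward iterates inside $r_1$-leaves, hence within the expansiveness constant of each other) against forward expansiveness of the limit pair. The bookkeeping, however, is genuinely different. The paper samples the pair at the escape time itself, inside the shell $\overline{\www{cu}{y_k}{\rho r_1}\setminus\www{cu}{y_k}{r_1}}$ with $\rho r_1<c$, so the limit pair is separated by construction, and the contradiction comes from showing that its entire forward orbit stays within $\rho r_1<c$. You sample one step before the exit, which forces the dichotomy on whether the limit collapses onto the base point: the collapsed case is excluded by the local invariance of Theorem \ref{hol0}(b), and the separated case is excluded by using the expansiveness time $N$ of the limit pair to bound the exit times $m_k\leq N$ and then collapsing after all. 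Both routes work; yours avoids the auxiliary radius $\rho$ at the price of the extra case, and it has the virtue of making explicit, via Theorem \ref{hol0}(b), why bounded exit times are impossible --- a point the paper leaves implicit when it simply asserts $(n_k)_k\nearrow\infty$. Two things you should state explicitly: since the conclusion is monotone in $r_1$, proving the statement for one sufficiently small $r_1$ (small enough that $r_1$-leaves have extrinsic diameter $<c$ and sit inside the expansiveness scale) does yield it for every $r_1<c$; and the limit point $\tilde z$ lies a priori only in the closed leaf $\overline{\www{cu}{\tilde x}{r_1}}$, which is harmless precisely because $r_1$ is strictly below the scale at which expansiveness is assumed.
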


\begin{proof}
We suppose that is not true, thus there exists $r_1$ such that
the previous proposition not holds. Let $\rho\gtrapprox1$ such
that $\rho r_1<c$ and let $(r_k)_k$ be a sequence of positive
numbers such that $r_k\rightarrow0$ and $r_k<r_1$. Thus there
exist $x_k\in\Lambda$ and $(n_k)_k\nearrow\infty$ such that
\[f\sp{-n_k}(\www{cu}{x_k}{r_k})\nsubseteq\www{cu}{f\sp{-n_k}
(x_k)}{r_1}\subset\www{cu}{f\sp{-n_k}(x_k)}{\rho r_1}.\]
We take each $n_k$ minimal with this property. Let us take
$y_k=f\sp{-n_k}(x_k)$ and take $z_k$ some point in the following
intersection
\[f\sp{-n_k}(\www{cu}{x_k}{r_k})\cap \overline{\www{cu}{y_k}
{\rho r_1}\setminus\www{cu}{y_k}{r_1}}.\]
Also we take $y_0$ and $z_0$ such that $z_k\rightarrow z_0$ and
$y_k\rightarrow y_0$.
By construction (and $C\sp1$ continuity of the $cu$-leaves) we
have that
$z_0\in \overline{\www{cu}{y_0}{\rho r_1}\setminus\www{cu}{y_0}
{r_1}}$.

We assert that \[\d(f\sp n(y_0),f\sp n(z_0))\leq\rho r_1\] for
each $n\geq1$, and since
$\rho r_1<c$ we have a contradiction with the expansiveness in
the $cu$-leaves. Then to conclude the proof, is only necessary
to prove the previous assertion.

By contradiction, we assume that there exist $n$ such that
\[\d(f\sp n(y_0),f\sp n(z_0))=\gamma>\rho r_1.\] By continuity of
$f\sp n$, given $\varepsilon>0$ we can
take $k\gg 1$ such that $n_k>n$ and satisfied \[\d(f\sp n(y_k),f
\sp n(y_0))<\varepsilon\, \, \, \textrm{ and }\, \, \, \d(f\sp n
(z_k),f\sp n(z_0))<\varepsilon.\]
If we take $\varepsilon$ such that
$\gamma-2\varepsilon>\rho r_1$ we conclude that
$\d(f\sp n(z_k),f\sp n(y_k))>\gamma-2\varepsilon>\rho r_1$.
To end, taking $\widetilde{z}_k\in\www{cu}{x_k}{r_1}$ such that
$f\sp{n_k}(\widetilde{z}_k)=z_k$,
the previous inequality implies that
\[\d(f\sp{n-n_k}(\widetilde{z}_k),f\sp{n-n_k}(x_k))>\rho r_1,\]
that is
\[f\sp{n-n_k}(\www{cu}{x_k}{r_k})\nsubseteq\www{cu}{f\sp{n-n_k}(x_k)}
{r_1},\]
that contradict the minimality of $n_k$. This ends the proof.
\end{proof}

\begin{pr}
Let $f$ be a forward expansive map in the $cu$-leaves, and
$r_0<r_1$ such that $r_0\in I(r_1)$. Then for every
$0<\varepsilon<r_1<c$ there exists $N=N(\varepsilon,r_0)$ such that
for all $x\in \Lambda$ and $n\geq N$
\[f\sp{-n}(\www{cu}{x}{r_0})\subset\www{cu}{f\sp{-n}(x)}{\e}.\]
\end{pr}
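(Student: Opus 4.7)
The plan is to mimic the contradiction argument of the preceding proposition, but with the role of the ``constant $r_1$'' replaced by the small target $\e$. Suppose the conclusion fails: there exist sequences $x_k\in\Lambda$ and $n_k\nearrow\infty$, and points $z_k\in\www{cu}{x_k}{r_0}$ such that
\[
\d\bigl(f\sp{-n_k}(x_k),f\sp{-n_k}(z_k)\bigr)\geq\e.
\]
Choose $\rho\gtrapprox1$ with $\rho r_1<c$. By the hypothesis $r_0\in I(r_1)$, i.e.\ the previous proposition, we automatically have $f\sp{-n_k}(z_k)\in\www{cu}{f\sp{-n_k}(x_k)}{r_1}$ for every $k$. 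Set $y_k=f\sp{-n_k}(x_k)$ and $w_k=f\sp{-n_k}(z_k)$, so $\d(y_k,w_k)\geq\e$ while $w_k\in\www{cu}{y_k}{r_1}$.

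Next I would extract converging subsequences. By compactness of $\Lambda$ and the $C\sp1$ continuity of the family $x\mapsto\www{cu}{x}{r_1}$ supplied by Theorem~\ref{hol0}, we may assume $y_k\to y_0\in\Lambda$ and $w_k\to w_0\in\overline{\www{cu}{y_0}{\rho r_1}}$, with $\d(y_0,w_0)\geq\e>0$; in particular $w_0\neq y_0$. The whole game is now to upgrade the one-step bound $w_0\in\overline{\www{cu}{y_0}{\rho r_1}}$ to a bound that holds along the \emph{entire} forward orbit, after which forward expansiveness will be violated.

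Concretely, I claim $\d(f\sp n(y_0),f\sp n(w_0))\leq\rho r_1$ for every $n\geq0$. Fix $n$ and take $k$ so large that $n_k>n$. The previous proposition, applied to the iterate $f\sp{-(n_k-n)}$, gives
\[
f\sp{-(n_k-n)}(\www{cu}{x_k}{r_0})\subset\www{cu}{f\sp{-(n_k-n)}(x_k)}{r_1}=\www{cu}{f\sp n(y_k)}{r_1},
\]
so that $f\sp n(w_k)=f\sp{-(n_k-n)}(z_k)\in\www{cu}{f\sp n(y_k)}{r_1}$ and therefore $\d(f\sp n(y_k),f\sp n(w_k))\leq\rho r_1$ (absorbing the loss between intrinsic and Euclidean diameter of the cu-leaves into the slack $\rho$, as in the proof of the preceding proposition). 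Letting $k\to\infty$ and using continuity of $f\sp n$ and of the cu-leaves yields the claimed bound. Since $\rho r_1<c$, this contradicts forward expansiveness applied to the pair $y_0,w_0$, finishing the argument.

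The main obstacle, as in the previous proposition, is the delicate passage to the limit: one must ensure simultaneously that $w_0$ remains in a cu-leaf of $y_0$ (so it is a valid witness for the expansiveness hypothesis), that $w_0\neq y_0$, and that the uniform distance bound $\rho r_1$ survives the limit along the whole forward orbit. All three points rely on the uniform $C\sp1$-continuity of $x\mapsto\phi\sp{cu}(x)$ on $\Lambda$, together with the minimality-type choice of $n_k$ made above and the freedom to absorb small errors into the factor $\rho$ chosen with $\rho r_1<c$.
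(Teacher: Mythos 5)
Your argument is correct, and it reaches the contradiction by the same outer scheme as the paper (negate the statement, extract convergent subsequences $y_k\to y_0$, $w_k\to w_0$ by compactness and continuity of the leaves, and exhibit a pair $y_0\neq w_0$ on a common $cu$-leaf whose whole forward orbit stays within distance $\rho r_1<c$, violating expansiveness). Where you genuinely diverge is in how the forward-orbit bound for the limit pair is obtained. The paper takes each $n_k$ \emph{minimal} with the escaping property and then proves the assertion $\d(f\sp n(y_0),f\sp n(z_0))\leq c$ by an inner contradiction: if it failed at some time $n$, then for large $k$ one would get $\d(f\sp n(y_k),f\sp n(z_k))>\e$, i.e.\ an escape from the $\e$-leaf already at time $n_k-n<n_k$, contradicting minimality. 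You instead invoke the hypothesis $r_0\in I(r_1)$ (the previous proposition) directly at every intermediate time $m=n_k-n\geq0$, which gives $f\sp n(w_k)\in\www{cu}{f\sp n(y_k)}{r_1}$ outright and hence the uniform bound $\rho r_1$ after passing to the limit in $k$; no minimality and no inner contradiction are needed. Your route is a modest but real simplification and makes the logical role of the hypothesis $r_0\in I(r_1)$ more transparent; the paper's minimality device yields the sharper intermediate bound (distance controlled by $\e$ rather than by $r_1$ before time $n_k$), but that extra precision is never used, since only $<c$ is required for the contradiction. Both arguments share the same mild looseness about the gap between the intrinsic radius of $\www{cu}{x}{r}$ and Euclidean distance, which you correctly absorb into the slack factor $\rho$ exactly as the paper's first proposition does.
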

\begin{proof}
We suppose that is not true. Thus there exist $\e$ such that
for all $k\geq0$ there exist $x_k\in \Lambda$ and $n_k>k$ such that
\[f\sp{-n_k}(\www{cu}{x_k}{r_0})\nsubseteq\www{cu}{f\sp{-n_k}
(x_k)}{\e}\subset\www{cu}{f\sp{-n_k}(x_k)}{r_1}.\]
We take each $n_k$ minimal with this property. Let us take
$y_k=f\sp{-n_k}(x_k)$ and take $z_k$ some point in the following
intersection
\[f\sp{-n_k}(\www{cu}{x_k}{r_k})\cap\overline{\www{cu}{y_k}{r_1}
\setminus\www{cu}{y_k}{\e}}.\] Note that in particular
$\d(y_k,z_k)<c$.

Also we take $y_0$ and $z_0$ such that $z_k\rightarrow z_0$ and
$y_k\rightarrow y_0$. By construction (and $C\sp1$ continuity of the
$cu$-leaves) we have that
$z_0\in\overline{\www{cu}{y_0}{r_1}\setminus\www{cu}{y_0}{\e}}$ and
$\d(y_0,z_0)\leq c$.

We assert that \[\d(f\sp n(y_0),f\sp n(z_0))\leq c\] for each
$n\geq1$, and since $c$ the expansiveness constant, we have a
contradiction with the hypothesis of expansiveness in the $cu$-leaves.
Then to conclude the proof, is only necessary to prove the previous
assertion.

By contradiction, and arguing as in the previous proposition, if we
assume that there exist $n$ such that $\d(f\sp n(y_0),f\sp
n(z_0))>c>\e$,there exist $k\gg 1$ such that $n_k>n$ and satisfies
$\d(f\sp n(z_k),f\sp n(y_k))>\e$. Thus
\[f\sp{n-n_k}(\www{cu}{x_k}{r_k})\nsubseteq\www{cu}{f\sp{n-n_k}(x_k)}
{r_1},\]
that contradict the minimality of $n_k$.
\end{proof}

\section{Proof of Theorem A}\label{section:7}

The proof of the Main Theorem, it follows after the following Theorem.

\begin{teo}\label{final.teo}
If the $cu$-leaf are dynamically defined, then the center-unstable
direction $F$, is an unstable direction
\end{teo}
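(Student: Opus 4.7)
By Theorem~\ref{teo.cueq}, the dynamically defined hypothesis upgrades each $cu$-leaf to a holomorphic submanifold of $\C^n$ tangent to $F(x)$ at $x$. The plan is to couple this holomorphic structure with the backward-shrinking property of the leaves (Lemma~\ref{equiv_cond_dyn_def}(2)) via the Cauchy integral formula, in order to extract uniform exponential contraction of $Df^{-n}|_F$, thereby forcing $\mu>1$.

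Concretely, I would first fix holomorphic parametrizations $\phi^{cu}(x):\Delta_l(0,1)\to\C^n$ of the $cu$-leaves depending continuously on $x\in\Lambda$; by compactness of $\Lambda$, both $\phi^{cu}(x)$ and its differential at $0$ have operator norms and inverses uniformly bounded in $x$. Next, for $r_1<r$ with $r$ as in Lemma~\ref{equiv_cond_dyn_def}, choose $r_0<r_1$ as small as needed and invoke part (2) of that lemma to obtain $N=N(r_0,r_1)$ such that
\[
f^{-N}(\www{cu}{x}{r_1})\subset \www{cu}{f^{-N}(x)}{r_0}\quad\text{for every }x\in\Lambda.
\]
Read through the parametrizations, the map
\[
h_x := \phi^{cu}(f^{-N}(x))^{-1}\circ f^{-N}\circ \phi^{cu}(x)
\]
is holomorphic, fixes $0$, and sends $\Delta_l(0,r_1)$ into $\Delta_l(0,r_0)$. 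The Cauchy integral formula then bounds each partial derivative of $h_x$ at $0$ by $r_0/r_1$, and translating back through the parametrizations yields a uniform estimate
\[
\|Df^{-N}(x)|_{F(x)}\|\leq K\cdot r_0/r_1
\]
for a constant $K$ depending only on the parametrizations and the ambient dimension.

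Choosing $r_0/r_1$ small enough that $\theta := K\cdot r_0/r_1 <1$ (and taking the corresponding $N$), iterating the cocycle yields $\|Df^{-kN}(x)|_{F(x)}\|\leq \theta^k$ uniformly in $x$, and filling in the intermediate iterates with the uniformly bounded $\|Df^{-1}\|$ gives exponential decay $\|Df^{-n}(x)|_{F(x)}\|\leq C\sigma^n$ for some $\sigma<1$ uniform in $x\in\Lambda$. This produces a rate $\mu>1$, so $F$ is a strong unstable direction. The main obstacle is ensuring that every estimate is genuinely uniform in $x\in\Lambda$: one needs the parametrizations $\phi^{cu}(x)$ to be holomorphic on a polydisc of uniform radius and to have uniformly controlled bi-Lipschitz constants. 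This is exactly what compactness of $\Lambda$, continuous dependence of the leaves in Theorem~\ref{hol0}, and the uniform character of the overlapping property from Proposition~\ref{pr.cueq} provide. A secondary bookkeeping point is to pass correctly between the chart-level derivative of $h_x$ and $Df^{-N}|_{F(x)}$; since $T_x W^{cu}(x)=F(x)$ and the chart derivatives at $0$ are uniformly invertible, this step is purely cosmetic.
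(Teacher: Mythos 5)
Your proposal is correct and is essentially the paper's own argument: both exploit the holomorphy of the leaves (Theorem~\ref{teo.cueq}) together with the backward shrinking of dynamically defined leaves to view a backward iterate, read in the leaf parametrizations, as a holomorphic self-map of polydiscs that strictly decreases the radius, and then convert this Schwarz-lemma-type derivative bound at the origin into uniform exponential contraction of $Df^{-n}|_{F}$. The only cosmetic differences are that you use Cauchy estimates on an $N$-step return and absorb the chart-distortion constant $K$ by taking $r_0/r_1$ small, whereas the paper applies the decreasing property of the Kobayashi metric to the one-step map after normalizing so that $f^{-1}(\www{cu}{x}{1})\subset\www{cu}{f^{-1}(x)}{1/2}$, which lets the chart constants enter only once at the two ends of the telescoping composition.
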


\begin{proof}[{\bf Proof of Theorem A}]
{\it (1)\, $\Rightarrow$\, (2)} In the hyperbolic case, the $cu$-leaf
is unique and equal to the unstable manifold. The forward
expansiveness in the $cu$-leaf is a well know property of the unstable
manifold (topological expansivity).\newline

\noindent
{\it (2)\, $\Rightarrow$\, (3)} It follows from the Theorem
\ref{teo6.1}\, .\newline

\noindent
{\it (3)\, $\Rightarrow$\, (1)} It follows from the Theorem
\ref{final.teo}\, .
\end{proof}

To prove the Theorem A, it is only necessary to prove the Theorem
\ref{final.teo}, and for this, we use that the $cu$-leaf
$\www{cu}{x}{1}$ are holomorphic submanifolds of $\C\sp n$ (Theorem
\ref{teo.cueq}), biholomorphic to a polydisc.

Consider the in\-fi\-ni\-te\-si\-mal Kobayashi metric on the polydisc
that is the natural generalization of the Poincar\'e metric for the
unitary disk in several variables (see \cite{royden} for instance).
The Kobayashi metric on a polydisc
$\Delta=\Delta_1(0,r_1)\times\cdots\times\Delta_1(0,r_n)$ is given by
the equation
\[K_\Delta(x,\xi)=\max_{i}\frac{r_i|\xi_i|}{r_i\sp2-|x_i|\sp2}.\]
Then if we consider $\Delta=\Delta_k(0,r)$ we have that
$K_\Delta(0,\xi)=r\sp{-1}||\xi||$.
The following proposition is an immediate consequence of the
definition of $K_\Delta$.
\begin{pr}
Let $f:\Delta\rightarrow\Delta\sp\prime$ be a holomorphic map between
two polydisc, then
\[K_{\Delta\sp\prime}(f(x),Df_x(\xi))\leq K_\Delta(x,\xi).\]
\end{pr}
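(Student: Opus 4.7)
The plan is to deduce the inequality from the universal (analytic-disk) characterization of the Kobayashi pseudometric, of which the explicit polydisc formula stated just above is a special case. I would first recall that on any complex manifold $\Omega$ one has the equivalent description
\[K_\Omega(x,\xi)=\inf\bpl\,|\zeta|\ :\ \zeta\in\C,\ \exists\,\psi:\D\to\Omega\ \text{holomorphic with}\ \psi(0)=x,\ D\psi_0(\zeta)=\xi\,\bpr,\]
and note that on a polydisc $\Delta$ this infimum is attained and agrees with $\max_i r_i|\xi_i|/(r_i\sp2-|x_i|\sp2)$. This identification is classical (see \cite{royden}) and follows from the Schwarz--Pick lemma applied coordinate by coordinate, together with explicit M\"obius disks that realize equality in each coordinate.

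Once this description is in place, the proposition reduces to a one-line chain-rule argument. Given any holomorphic $\psi:\D\to\Delta$ with $\psi(0)=x$ and $D\psi_0(\zeta)=\xi$, the composition $f\circ\psi:\D\to\Delta\sp\prime$ is itself holomorphic, satisfies $(f\circ\psi)(0)=f(x)$, and by the chain rule
\[D(f\circ\psi)_0(\zeta)=Df_x\circ D\psi_0(\zeta)=Df_x(\xi).\]
Hence $f\circ\psi$ is an admissible analytic disk for computing $K_{\Delta\sp\prime}(f(x),Df_x(\xi))$ with the same parameter $|\zeta|$, which gives $K_{\Delta\sp\prime}(f(x),Df_x(\xi))\leq|\zeta|$. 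Taking the infimum over all $\psi$ admissible for $K_\Delta(x,\xi)$ yields
\[K_{\Delta\sp\prime}(f(x),Df_x(\xi))\leq K_\Delta(x,\xi),\]
as claimed.

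There is no genuine obstacle here: the only nontrivial ingredient is the equivalence between the analytic-disk definition and the explicit polydisc formula used in the paper, and this equivalence can simply be quoted from \cite{royden} rather than reproved. The contraction statement itself is then formal, being the infinitesimal version of the defining property of the Kobayashi pseudometric as the largest pseudometric for which holomorphic maps are distance-decreasing.
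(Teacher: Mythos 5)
Your argument is correct: the distance-decreasing property is formal once one invokes the analytic-disk characterization of the infinitesimal Kobayashi metric and composes candidate disks with $f$, and the identification of that infimum with the explicit polydisc formula is indeed classical and can be quoted from Royden. The paper offers no proof at all (it declares the proposition an immediate consequence of the definition of $K_\Delta$), so your write-up simply supplies the standard justification the author had in mind; there is no substantive difference in approach.
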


\begin{proof}[{\bf Proof of Theorem \ref{final.teo}}] Since that
the $cu$-leaf are dynamically defined without loss of generality, we
can assume that
\[f\sp{-1}(\www{cu}{x}{1})\subset\www{cu}{f\sp{-1}(x)}{1/2}\]
for every $x\in\Lambda$. Let $\phi\sp{cu}$ the continuous function
given by the Theorem \ref{hol0}. From the Theorem \ref{ws_holm}, it
follows that the function
$\phi\sp{cu}(x):\Delta_l(0,1)\rightarrow\C\sp n$ is holomorphic, where
$l$ is the complex dimension of $F(x)$.

We define the holomorphic map
$f_x:\Delta_l(0,1)\rightarrow\Delta_l(0,1)$ given by
\[f_x(z)=[(\phi\sp{cu}(f\sp{-1}(x)))\sp{-1}\circ f\sp{-1}\circ
\phi\sp{cu}(x)](z).\]
Applying the previous Proposition, it is follows that
\[
\begin{aligned}
2||Df_x(0)\xi||&=2||D[(\phi\sp{cu}(f\sp{-1}(x)))\sp{-1}](f\sp{-1}
(x))\circ Df\sp{-1}(x)\circ
D[\phi\sp{cu}(x)](0)\xi||\\
&\leq||\xi||,
\end{aligned}
\]
then
\[||Df_{f\sp{-(n-1)}(x)}(0)\circ\cdots\circ Df_{f\sp{-1}(x)}(0)\circ
Df_x(0)||\leq\left(\frac{1}{2}\right)\sp n.\]
On the other hand, using the continuity of the function
$x\mapsto\phi\sp{cu}(x)$ and the compactness of the set $\Lambda$ we
conclude that there exist a constant $C>0$ such that
$C\sp{-1}\leq||D\phi\sp{cu}(x)(0)||\leq C$.

To end, since that
\[
\begin{aligned}
Df\sp{-n}(x)|_{F(x)}=D[\phi\sp{cu}(f\sp{-(n-1)}(x))]&(0)\circ
Df_{f\sp{-(n-1)}(x)}(0)\circ\cdots\\
&\cdots\circ Df_{f\sp{-1}(x)}(0)\circ Df_x(0)\circ
D[\phi\sp{cu}(x)](x),
\end{aligned}
\]
it follows that for every $\xi\in F(x)$
\[||Df\sp{-n}(x)\xi||\leq(1/2)\sp nC\sp2||\xi||,\]as desired.
\end{proof}

\section{Some remark for complex H\'enon maps}\label{section8}

This section is devote to prove the Theorem B. Also in the end of this
section we prove the Propositions \ref{1aprop} and \ref{hol6}. For
notations and definition of the Julia set $J$ and the support of the
measure of maximal entropy $J\sp*$, see \cite{bs1}.

\subsection{Zero Lyapunov exponent measure}

In this subsection we introduce some definitions to enunciate the
Theorem B. In what follows, we assume that $f$ is a dissipative
generalized H\'enon map in $\Ctwo$, with $|\det(Df)\, |=b<1$.

Denote by $\nu$, to a $f$-invariant measure whose support
is contained in $J$. Also, we denote by $\reg(\nu)$, the set of all
regular point in $\supp(\nu)$. By the classical Oseledets Theorem, we
know that $\nu(\reg(\nu))=1$. Let $x\in J$ be a regular point and let
$\lambda\sp-(x)\leq\lambda\sp+(x)$ its Lyapunov exponents, then they
are related with a splitting $E\sp-_x$ and $E\sp+_x$ respectively.
Since $J$ has no attracting periodic points, from the equation
$\lambda\sp-(x)+\lambda\sp+(x)=\log(b)$ it follows that
$\lambda\sp-(x)\leq\log(b)<0\leq\lambda\sp+(x)$.

\begin{defi}We say that a $f$-invariant measure $\nu$:
\begin{enumerate}
\item is hyperbolic, if $\lambda\sp+(x)>0$ for $\nu$-a.e.,
\item has a zero exponent, if $\lambda\sp+(x)=0$ for $\nu$-a.e.,
\end{enumerate}
\end{defi}

Give $\nu$ a measure, we denote by $\reg\sp+(\nu)$ (resp.
$\reg\sp0(\nu)$), the set of all regular points, that has the maximal
exponent positive (resp. null). It is clear that
$\reg(\nu)=\reg\sp+(\nu)\sqcup\reg\sp0(\nu)$, where $\sqcup$ is a
disjoint union. It is easy to see from the definition that $\nu$ is
hyperbolic (resp. be a zero exponent) if and only if
$\nu(\reg\sp+(\nu))=1$ (resp. $\nu(\reg\sp0(\nu))=1$). A measure, is
not of the above types if and only if $\nu(\reg\sp+(\nu)),\,
\nu(\reg\sp0(\nu))>0$. We recall that
$\supp(\nu)=\overline{\reg(\nu)}(\mod 0)={\reg}(\nu)(\mod 0)$.


We can write every measure $\nu$, as a direct sum of the form
$\nu=\nu\sp+\oplus\nu\sp0$, where $\nu\sp+=\nu|_{{\reg\sp+(\nu)}}$
is hyperbolic and $\nu\sp0=\nu|_{{\reg\sp0(\nu)}}$ is has a zero
exponent. Naturally $\nu\sp0\equiv0$ when $\nu$ is hyperbolic, and
$\nu\sp+\equiv0$ when $\nu$ has a zero exponent.

\begin{rem}
It is important to recall that, for a measure that is neither
hyperbolic nor has zero exponent, the supports
$\supp(\nu\sp0)=\reg\sp0(\nu)(\mod 0)$ and
$\supp(\nu\sp+)=\reg\sp+(\nu)(\mod 0)$ can intersect, but
this intersection has measure zero both for $\nu\sp0$ and for
$\nu\sp+$.
\end{rem}

We define the set support of $J$, as the set
\[\supp(J)=\overline{\cup\bpl \supp(\nu)\,:\,\nu\, \textrm{ is }\,
f\textrm{-invariant}\, \bpr}.\]

In the paper \cite{bls1}, the authors proof that the set
$J\sp*=\supp(\mu)$, where $\mu$ is the unique measure of maximal
entropy $\log(\deg(f))$, and that any hyperbolic measure has support
contained in $J\sp*$. Then we have that
\[J\sp*=\overline{\cup\bpl \supp(\nu)\,:\,\nu\, \textrm{ is }\,
\textrm{hyperbolic}\, \bpr}.\]
Also we define the set
\[J_0=\overline{\cup\bpl \supp(\nu)\,:\,\nu\, \textrm{ is }\,
\textrm{has a zero exponent}\, \bpr}.\]
Note that by definition, $J_0$ is a compact $f$-invariant set.

\begin{pr}
The equality $\supp(J)=J\sp*\cup J_0$ holds.
\end{pr}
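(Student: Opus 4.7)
The plan is to prove the two inclusions separately, with the nontrivial direction reduced to the measure decomposition $\nu=\nu\sp++\nu\sp0$ that was just introduced in the excerpt.

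For the easy inclusion $J\sp*\cup J_0\subseteq\supp(J)$, I would simply note that hyperbolic measures and zero-exponent measures are in particular $f$-invariant measures supported in $J$, so each term on the left sits inside the union defining $\supp(J)$; closing up preserves this, and taking the union gives the inclusion.

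For the inclusion $\supp(J)\subseteq J\sp*\cup J_0$, I would fix an arbitrary $f$-invariant probability measure $\nu$ supported in $J$ and show $\supp(\nu)\subseteq J\sp*\cup J_0$. The key observation is that by Oseledets, the sets $\reg\sp+(\nu)$ and $\reg\sp0(\nu)$ are $f$-invariant and measurable, and they partition $\reg(\nu)$ (which has full $\nu$-measure). Hence, writing $a=\nu(\reg\sp+(\nu))$ and $b=\nu(\reg\sp0(\nu))$, and normalizing the two pieces of the decomposition $\nu=\nu\sp++\nu\sp0$ whenever they are nonzero, I obtain $f$-invariant probability measures $\widetilde\nu\sp+=a\sp{-1}\nu\sp+$ and $\widetilde\nu\sp0=b\sp{-1}\nu\sp0$ which are respectively hyperbolic and of zero exponent. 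Consequently $\supp(\nu\sp+)=\supp(\widetilde\nu\sp+)\subseteq J\sp*$ and $\supp(\nu\sp0)=\supp(\widetilde\nu\sp0)\subseteq J_0$, while
\[\supp(\nu)=\overline{\reg(\nu)}=\overline{\reg\sp+(\nu)\sqcup\reg\sp0(\nu)}=\supp(\nu\sp+)\cup\supp(\nu\sp0)\subseteq J\sp*\cup J_0.\]
Taking the union over all such $\nu$ and closing up, I conclude $\supp(J)\subseteq J\sp*\cup J_0$, since $J\sp*\cup J_0$ is already closed.

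There is no real obstacle here, but the one point to be careful about is that the two pieces $\nu\sp+,\nu\sp0$ of the decomposition really are (after normalization) $f$-invariant measures of the correct type; this is immediate from the $f$-invariance of the Oseledets splitting and of the Lyapunov exponents, together with the fact that the set $\reg(\nu)$ itself has full $\nu$-measure and is $f$-invariant. The degenerate cases $a=0$ or $b=0$ (i.e.\ $\nu$ already hyperbolic or already zero-exponent) need no separate argument, since the corresponding piece of the decomposition then vanishes and its ``support'' contributes nothing to the union.
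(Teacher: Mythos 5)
Your proposal is correct and rests on exactly the same idea as the paper's proof, namely the decomposition $\nu=\nu\sp+\oplus\nu\sp0$ into a hyperbolic part and a zero-exponent part, so that $\supp(\nu)=\supp(\nu\sp+)\cup\supp(\nu\sp0)\subseteq J\sp*\cup J_0$. The only (cosmetic) difference is that the paper phrases the hard inclusion via a convergent sequence $x_n\in\supp(\nu_n)$ and a pigeonhole/subsequence argument, whereas you argue measure by measure and then invoke the closedness of $J\sp*\cup J_0$; both are fine.
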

\begin{proof}
It is clear that $J\sp*\cup J_0\subset\supp(J)$. On the other hand,
Let $x_n\rightarrow x\in\supp(J)$ with $x_n\in\supp(\nu_n)$. Writing
$\nu_n=\nu_n\sp+\oplus\nu_n\sp0$, we have that there is an infinity
times $n$ such that either $x_n\in\supp(\nu_n\sp+)$ or
$x_n\in\supp(\nu_n\sp0)$, and we can take a subsequence converging to
$x$. This conclude the proof.
\end{proof}

\subsection{Proof of Theorem B}
This subsection in devote to prove the Theorem B, and this proof will
be supported essentially in the Forn\ae{}ss Theorem (see \cite{F}),
and the Theorem \ref{JJJ}.

\begin{teo}[{\bf Forn\ae{}ss}]
Let $f$ be a complex H\'enon map which is hyperbolic in $J\sp*$. If
$f$ is not volume preserving, then $J\sp*=J$.
\end{teo}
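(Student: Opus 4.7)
The plan is to argue by contradiction, assuming $J^* \subsetneq J$ and producing a point whose dynamical behavior is incompatible with hyperbolicity on $J^*$ combined with non-volume-preservation. The natural ambient framework is pluripotential theory: for a complex H\'enon map, the Green functions $G^\pm$ for forward/backward escape yield positive closed currents $\mu^\pm = dd^c G^\pm$, and the measure of maximal entropy is $\mu = \mu^+ \wedge \mu^-$, with $J^* = \supp(\mu)$ and $J = J^+ \cap J^-$ where $J^\pm = \supp(\mu^\pm)$. Thus $J^* \subsetneq J$ means the wedge $\mu^+ \wedge \mu^-$ vanishes on a nonempty (relatively) open subset of $J$.

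First I would exploit hyperbolicity on $J^*$ to obtain holomorphic local stable and unstable manifolds of uniform size through every point of $J^*$ (by the holomorphic Hadamard--Perron theorem), yielding two laminations. Standard results of Bedford--Lyubich--Smillie identify $\mu^-$ near $J^*$ with the transverse measure of the unstable lamination (and symmetrically for $\mu^+$), so near $J^*$ the wedge $\mu^+ \wedge \mu^-$ is genuinely nonzero. Next, I would pick $p \in J \setminus J^*$ and analyze an unstable slice of $K^+$ through $p$: for H\'enon maps, $J^+$ has a natural lamination by holomorphic curves off critical pieces, and these curves pass close to $J^*$ under forward iteration of $f^{-1}$ (since hyperbolic saddles are dense in $J^*$, whose unstable manifolds fill out $J^+$).

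The crux is to use the non-volume-preserving hypothesis $|\det Df| = b \neq 1$ (assume $b < 1$ dissipative, the other case being symmetric). Dissipation forces $\lambda^-(x) \leq \log b < 0$ at every regular point of every invariant measure supported in $J$, hence no zero or positive Lyapunov exponent can occur in the contracting direction. Combined with hyperbolicity on $J^*$ and pluri-potential rigidity, this rules out invariant currents on $J \setminus J^*$ other than those arising from the laminations through $J^*$. Concretely, pulling back the unstable slice through $p$ and using the contracting Jacobian bound, one shows that $p$ must lie on the closure of an unstable manifold of a saddle in $J^*$, hence $p \in J^+$ is limited by $J^-$ in a neighborhood where $\mu^+ \wedge \mu^- > 0$, forcing $p \in J^*$, a contradiction.

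The main obstacle is the precise justification that the laminations through $J^*$ extend to cover all of $J$, rather than leaving ``orphan'' pieces of $J$ accumulated on by neither stable nor unstable leaves of $J^*$. This is exactly where non-volume-preservation is essential: in the conservative case there can be points in $J$ with zero Lyapunov exponent or recurrent-but-not-hyperbolic behavior, not forced into $J^*$. The dissipative bound $b < 1$ gives the needed uniform contraction transverse to the unstable lamination, ensuring that $f^n$ applied to any transverse disk at $p$ collapses it onto the unstable lamination of $J^*$, thereby identifying $p$ with a point of $J^*$.
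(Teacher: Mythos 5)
First, a point of reference: the paper does not prove this statement at all. It is quoted verbatim from Forn\ae{}ss \cite{F} and used as a black box, so there is no in-paper argument to compare yours against; your sketch must stand on its own, and it has a genuine gap at its central step. Everything hinges on the claim that dissipativity plus hyperbolicity on $J^*$ ``rules out invariant currents on $J\setminus J^*$'' via ``pluripotential rigidity,'' and that forward iteration collapses a transverse disk at $p\in J\setminus J^*$ onto the unstable lamination of $J^*$. This presupposes that the forward orbit of $p$ accumulates only on $J^*$, which is precisely what must be proved: a priori $J\setminus J^*$ could contain a compact invariant set carrying an invariant measure $\nu$. Dissipativity gives $\lambda^-\le\log b<0$ but says nothing about $\lambda^+$; since a measure with $\lambda^+>0$ is hyperbolic and hence (Bedford--Lyubich--Smillie) supported in $J^*$, the only case to exclude is $\lambda^+=0$ --- and nothing in your sketch excludes it. Indeed, ruling out such zero-exponent measures is exactly the content of the paper's Theorem B, which needs the \emph{additional} hypothesis of dominated splitting on all of $J$; Forn\ae{}ss's theorem assumes only hyperbolicity on $J^*$, so your argument cannot silently borrow that machinery.

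There are also factual slips that break the intermediate steps. Unstable manifolds of saddles are dense in $J^-$, not $J^+$ (Bedford--Smillie II), so the sentence about curves in $J^+$ being filled out by unstable manifolds is backwards. More seriously, the concluding inference is a non sequitur: \emph{every} point of $J\subset J^-$ lies in the closure of $W^u(p)$ for any saddle $p\in J^*$, yet $J^-\ne J^*$ in general, so ``$p$ lies on the closure of an unstable manifold of a saddle in $J^*$'' cannot force $p\in J^*$. The published proof proceeds along quite different lines (local product structure of the hyperbolic set $J^*$, the structure of the interior of $K^+$ for a dissipative map, and the fact that $J^*$ is a homoclinic class), none of which is reconstructed here.
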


This implies that is sufficient to see hyperbolicity of the $J\sp*$.
This allows enunciate the following result.

\begin{teo}\label{JJJ}
Let $f$ be a complex H\'enon map, dissipative with dominated splitting
in $J\sp*$. Then we have the following dichotomy:
\begin{itemize}
\item[{\it i.}] The set $J\sp*$ is hyperbolic.
\item[{\it ii.}] $J\sp*\cap J_0\neq\emptyset$.
\end{itemize}
\end{teo}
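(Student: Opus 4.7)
The plan is to prove the contrapositive: assuming $J\sp*$ is not uniformly hyperbolic, I will construct an $f$-invariant Borel probability measure $\nu$ with $\supp(\nu)\subseteq J\sp*$ and $\lambda\sp+(x)=0$ for $\nu$-almost every $x$; then $\supp(\nu)\subseteq J\sp*\cap J_0$ is immediate from the definition of $J_0$, establishing case $(ii)$. By Proposition~\ref{1aprop} the hypothesis (dominated splitting on $J\sp*$ together with dissipativity of $f$) makes $f|_{J\sp*}$ partially hyperbolic, so Theorem~A applies. Failure of $(i)$ therefore forces failure of statement $(2)$ in Theorem~A: the $cu$-leaves are not forward expansive on $J\sp*$. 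Hence for each $k\in\N$ one may select $x_k\in J\sp*$ and $y_k\in\www{cu}{x_k}{\e}\setminus\{x_k\}$ with $\d(f\sp n(x_k),f\sp n(y_k))\leq 1/k$ for every $n\geq 0$.

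The central step is to show that $\|Df\sp n|_{F(x_k)}\|$ grows sub-exponentially as $n\to\infty$. Join $x_k$ to $y_k$ by a $C\sp1$ arc $\gamma_k\subset\www{cu}{x_k}{\e}$ tangent to $F$, of positive intrinsic length $L_k$. Because $F$ is $Df$-invariant, the iterates $\Gamma_k\sp n=f\sp n\circ\gamma_k$ are $C\sp1$ arcs tangent to $F$ at every point, with endpoints at distance at most $1/k$. Using the complex $1$-dimensionality of $F$, the holomorphy of the iterates $f\sp n$, and a Koebe/bounded-distortion estimate in the spirit of Proposition~\ref{ws_holm} (adapted to the iterates $f\sp n$ themselves, since the $cu$-leaves are not holomorphic in this non-dynamically-defined regime), one bounds $\mathrm{length}(\Gamma_k\sp n)$ by a constant $C_k$ independent of $n$, with $C_k\to 0$ as $k\to\infty$. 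A mean-value argument on $\gamma_k$ combined with distortion then transfers this to the pointwise estimate $\|Df\sp n|_{F(x_k)}\|\leq C_k'$ uniformly in $n$, and in particular $\lim_{n\to\infty}\tfrac{1}{n}\log\|Df\sp n|_{F(x_k)}\|=0$. Setting $\varphi(x)=\log\|Df|_{F(x)}\|$ (continuous by continuity of $F$), any weak-$*$ accumulation point $\nu_k$ of the empirical measures $\tfrac{1}{n}\sum_{j=0}\sp{n-1}\delta_{f\sp j(x_k)}$ is $f$-invariant, supported in $J\sp*$ (closed and $f$-invariant), and satisfies $\int\varphi\,d\nu_k\leq 0$. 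By Birkhoff this integral equals $\int\lambda\sp+\,d\nu_k$; combined with $\lambda\sp+\geq 0$ on $J$ (consequence of the absence of attracting periodic points in $J$, as recalled at the start of Section~\ref{section8}) one obtains $\lambda\sp+=0$ $\nu_k$-a.e. An ergodic component $\tilde\nu_k$ of $\nu_k$ still has $\lambda\sp+=0$ almost everywhere and $\supp(\tilde\nu_k)\subseteq J\sp*$, so $\supp(\tilde\nu_k)\subseteq J\sp*\cap J_0\neq\emptyset$.

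The hard part is the length/distortion estimate in the second paragraph. In the dynamically-defined regime Theorem~\ref{teo.cueq} provides holomorphicity of the $cu$-leaves and the Kobayashi/Schwarz argument used in the proof of Theorem~\ref{final.teo} would suffice directly; here we are precisely in the opposite regime, so the bound has to be extracted from the holomorphy of the iterates $f\sp n$ together with the $\C$-linear tangent structure of $F$ \emph{along the orbit}, rather than from holomorphicity of the leaves themselves. This essentially requires adapting the graph-transform/Koebe estimates of Section~\ref{section3} to the iterates, and is the technical core of the argument; everything else is standard ergodic-theoretic bookkeeping (empirical measures, weak-$*$ compactness, ergodic decomposition).
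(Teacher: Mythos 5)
Your overall strategy (produce an invariant measure with $\int\log\|Df|_F\|\,d\nu\le 0$, combine with $\lambda\sp+\ge 0$ on $J$, and conclude $\lambda\sp+=0$ $\nu$-a.e.) is sound in its ergodic-theoretic bookkeeping, but the central analytic step has a genuine gap, and you have in effect flagged it yourself. Two things go wrong. First, the bound $\mathrm{length}(\Gamma_k\sp n)\le C_k$ does not follow from the endpoints of $f\sp n(\gamma_k)$ being $1/k$-close: an arc with nearby endpoints can have arbitrarily large length, and nothing in the failure of forward expansiveness controls the whole arc rather than the single pair $(x_k,y_k)$. Second, the Koebe/bounded-distortion input you invoke to pass from arc length to the pointwise estimate $\|Df\sp n|_{F(x_k)}\|\le C_k'$ requires the restriction of $f\sp n$ to a one-complex-dimensional \emph{holomorphic} curve; but in the regime you are working in the $cu$-leaves are only $C\sp1$ (Theorem \ref{teo.cueq} gives holomorphy precisely when the leaves are dynamically defined, which is equivalent to hyperbolicity by Theorem A), so the Schwarz--Koebe machinery of Sections 3--7 is exactly what is unavailable. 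A mean-value argument only controls the derivative at some $n$-dependent point of $\gamma_k$, not at $x_k$, and repairing that requires the same missing distortion control. So the "technical core" you defer is not a routine adaptation; it is the whole difficulty.

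The paper takes a different route that bypasses this entirely: it invokes Ma\~n\'e's Theorem 2.1 from the proof of the $C\sp1$ Stability Conjecture. The hypotheses (homogeneous dominated splitting with $E$ contracting, and the inequality (\ref{03}) on a dense set) are verified using the density of saddle periodic points in $J\sp*$ together with the Bedford--Lyubich--Smillie bound $\lambda\sp+(p)\ge\log(d)$ at saddles. Ma\~n\'e's theorem then says that if $F$ is not expanding there exist periodic points $p_n$ of arbitrarily large period $N(n)$ with $\log\bigl(\tfrac{n-1}{n}\bigr)\le\tfrac{1}{N(n)}\log\|(Df)\sp{-N(n)}|_{F(p_n)}\|<0$, and the weak-$*$ limit of the empirical measures on these periodic orbits (as in Proposition \ref{JJ}) is the desired zero-exponent measure. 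If you want to keep your framework, you should replace your second paragraph by an appeal to Ma\~n\'e's theorem (or to the Bonatti--Gan--Yang theorem quoted in the paper) rather than attempting a direct distortion estimate on non-holomorphic leaves.
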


In the next subsection, we shall prove this result, as a corollary of
the Theorem 2.1 of the celebrated work of R. Ma\~n\'e ``A proof of the
$C\sp1$ Stability Conjecture''. This Theorem can be also proved
independently of the Ma\~n\'e work. For this another proof see
\cite{pancho}.

As a corollary of the previous Theorem, we have.
\begin{cor}\label{cor_jc}
The set $J_0\neq\emptyset$ if and only if $J_0\cap
J\sp*\neq\emptyset$.
\end{cor}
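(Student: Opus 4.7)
The ``if'' direction is immediate: if the intersection $J_0\cap J^*$ is nonempty, then a fortiori $J_0$ itself is nonempty. So the only content is the converse, and the plan is to deduce it by combining the dichotomy of Theorem~\ref{JJJ} with Forn\ae{}ss' Theorem and the basic fact that uniformly hyperbolic compact invariant sets carry no invariant measure with a zero Lyapunov exponent.

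Assume $J_0\neq\emptyset$ and apply Theorem~\ref{JJJ} to $f$, which is dissipative with dominated splitting in $J^*$ by hypothesis. Case \textit{(ii)} gives $J^*\cap J_0\neq\emptyset$ at once, so we are done. It therefore suffices to rule out case \textit{(i)}, namely that $J^*$ is uniformly hyperbolic. The main point is that a dissipative complex H\'enon map is not volume preserving (since $|\det Df|=b<1$), so Forn\ae{}ss' Theorem applied to a hyperbolic $J^*$ yields the equality $J=J^*$. Hence every $f$-invariant measure supported in $J$ is in fact supported in the hyperbolic compact set $J^*=J$.

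The final step is to observe that any $f$-invariant probability measure $\nu$ supported in a uniformly hyperbolic compact invariant set has no zero Lyapunov exponent. Indeed, the dominated splitting $E^s\oplus E^u$ over $J^*$ is uniform, so for every regular point $x$ we get $\lambda^-(x)\le\log\|Df|_{E^s}\|<0$ and $\lambda^+(x)\ge-\log\|Df^{-1}|_{E^u}\|>0$; hence $\nu$ is hyperbolic. Consequently there exists no $f$-invariant measure on $J=J^*$ with a zero exponent, so by the very definition of $J_0$ we would have $J_0=\emptyset$, contradicting the standing assumption $J_0\neq\emptyset$. Therefore case \textit{(i)} is impossible, case \textit{(ii)} holds, and $J_0\cap J^*\neq\emptyset$ as desired.

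I do not foresee any real obstacle here: once Theorem~\ref{JJJ} and Forn\ae{}ss' Theorem are in hand, the argument is purely formal. The one place to be mildly careful is the ``no zero exponent on a hyperbolic set'' step, which should simply be justified by referring back to the uniform bounds in Definition~\ref{part_hip} applied with $\lambda<1<\mu$ (the hyperbolic case), together with the standard Oseledets description of $\lambda^\pm$ in terms of $\|Df^n|_{E^s}\|$ and $\|Df^{-n}|_{E^u}\|$.
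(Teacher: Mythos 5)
Your argument is correct and is essentially the paper's own proof, just phrased as a contradiction rather than a contrapositive: the paper likewise combines the dichotomy of Theorem~\ref{JJJ} with Forn\ae{}ss' Theorem to get $J=J^*$ hyperbolic and hence $J_0=\emptyset$. The only difference is that you spell out the (standard) final step that a uniformly hyperbolic set supports no invariant measure with a zero exponent, which the paper leaves implicit.
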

\begin{proof}
If $J_0\cap J\sp*=\emptyset$, then $J\sp*$ is hyperbolic. Thus from
Forn\ae{}ss Theorem, $J$ is hyperbolic and $J_0=\emptyset$.
\end{proof}

Let $\per$ the set of all periodic point contained in $J$. From
\cite{bs1} any periodic saddle point $p$ of $f$ is on $\per$, and
$J\sp*=\overline{\per}$. We recall that from Proposition \ref{1aprop},
the dominated direction $E$ in each periodic point is a stable
direction. This justify the following definition.

\begin{defi}\label{defi:7}
\begin{enumerate}
\item We say that $\per$ is uniformly hyperbolic if there exist
a $C\geq1$ and $0<\lambda_1<1$ such that for every $n\geq1$
\[||Df\sp{-n}|_{F(p)}||\leq C\lambda_1\sp n,\]
for every $p\in\per$.
\item We say that $\per$ is uniformly expanding at the period,
if there exist a $C\geq1$ and $0<\lambda_1<1$ such that
\[||Df\sp{-\pi(p)}|_{F(p)}||\leq C\lambda_1\sp{\pi(p)},\]
for every $p\in\per$, where $\pi(p)$ is the period of $p$.
\end{enumerate}
\end{defi}

\begin{proof}[{\bf Proof of Theorem B}]{\it (1)\, $\Leftrightarrow$\,
(2)}\, From the Theorem \ref{JJJ}, it follows that if $J$ is
hyperbolic, then $J_0\cap J\sp*=\emptyset$. Thus, from Corollary
\ref{cor_jc} it follows that $J_0=\emptyset$.

The reciprocal direction, is essentially the same: Corollary
\ref{cor_jc} say that if $J_0=\emptyset$, then $J_0\cap
J\sp*=\emptyset$. The hyperbolicity of $J$, it follows from
the Theorem \ref{JJJ}, and the Forn\ae{}ss Theorem.

It is clear that {\it (1)\, $\Rightarrow$\, (3)\, $\Rightarrow$\,
(4)}.

Then is only necessary to proof that {\it (4)\, $\Rightarrow$\, (2)}
and we conclude the proof of Theorem B. This it follows directly from
the following result.
\end{proof}

\begin{pr}\label{JJ}
Let $f$ be a complex H\'enon map, dissipative with dominated splitting
in $J\sp*$. Then we have the following dichotomy:
\begin{itemize}
\item[{\it i.}] The set $\per$ is uniformly expanding at the
period.
\item[{\it ii.}] $J\sp*\cap J_0\neq\emptyset$.
\end{itemize}
\end{pr}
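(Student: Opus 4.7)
The plan is to argue by contraposition: suppose $\per$ fails to be uniformly expanding at the period, and produce an invariant measure in $J^*$ with a zero Lyapunov exponent, which will give $J^*\cap J_0\neq\emptyset$. The setup is the same as in Theorem \ref{JJJ}, only restricted to periodic orbits, so I expect to mimic its Ma\~n\'e-style strategy of converting a failure of expansion into an accumulation measure with neutral behavior.

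First I would translate the failure of (i) into exponents. Since $F$ is a complex line bundle over $J^*$, for $p\in\per$ one has $\lambda^+(p)=\frac{1}{\pi(p)}\log\|Df^{\pi(p)}|_{F(p)}\|$. Failure of (i) means that for every $C\geq 1$ and $\lambda_1<1$ there exists $p\in\per$ with $\|Df^{-\pi(p)}|_{F(p)}\|>C\lambda_1^{\pi(p)}$, so a sequence $p_n\in\per$ can be selected with $\lambda^+(p_n)\to 0$. Because $J^*$ contains no sinks (it has no attracting periodic orbits by the results in \cite{bs1}) and $E$ is stable by Proposition \ref{1aprop}, each $p_n$ is a saddle and hence $\lambda^+(p_n)>0$; moreover the periods $\pi(p_n)$ must tend to $\infty$ (only finitely many periodic points have bounded period and fixed exponents). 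So I get $\lambda^+(p_n)\to 0^+$ with $\pi(p_n)\to\infty$.

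Next I would form the uniformly distributed orbital measures $\nu_n=\frac{1}{\pi(p_n)}\sum_{k=0}^{\pi(p_n)-1}\delta_{f^k(p_n)}$ and extract a weak-$*$ subsequential limit $\nu$. Since $\supp\nu_n\subset\per$, I have $\supp\nu\subset\overline{\per}=J^*$, and $\nu$ is $f$-invariant. Continuity of $\log\|Df|_F\|$ (inherited from continuity of $F$ under the dominated splitting) and weak-$*$ convergence give $\int\log\|Df|_F\|\,d\nu=\lim_n\lambda^+(p_n)=0$. Decomposing $\nu=\int\nu_\alpha\,d\hat\mu(\alpha)$ into ergodic pieces, Birkhoff's theorem identifies $\int\log\|Df|_F\|\,d\nu_\alpha$ with the top Lyapunov exponent $\lambda^+(\nu_\alpha)$.

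The delicate step, and the only one I expect to require real argumentation, is to show $\lambda^+(\nu_\alpha)\geq 0$ for $\hat\mu$-a.e. $\alpha$. Assuming this, the equality $\int\lambda^+(\nu_\alpha)\,d\hat\mu=0$ forces a positive-measure set of components with $\lambda^+(\nu_\alpha)=0$; picking one such $\nu_0$ gives $\supp\nu_0\subset J^*\cap J_0$, proving (ii). For the non-negativity, the argument uses that $\supp\nu_\alpha\subset J^*\subset J$. If $\lambda^+(\nu_\alpha)<0$, then both Oseledets exponents are negative, and by Pesin theory $\nu_\alpha$-a.e. point $x$ admits a local $2$-dimensional stable manifold: $f^n$ is exponentially contracting on a full neighborhood of $x$, so $\{f^n\}$ is normal at $x$ and $x$ lies in the Fatou set, contradicting $x\in J$. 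This is the crucial input, and once it is in place the rest of the argument is structural and parallels the reduction used in Theorem \ref{JJJ}.
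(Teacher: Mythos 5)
Your proposal is correct and follows essentially the same route as the paper's proof: contraposition, selection of periodic points $p_n$ with $\lambda^+(p_n)\to 0^+$, passage to a weak-$*$ limit of the orbital measures, and the conclusion $\int\log\|Df|_F\|\,d\nu=0$ forcing a zero-exponent measure supported in $\overline{\per}=J^*$. The only difference is one of explicitness: you spell out the ergodic decomposition and the non-negativity of $\lambda^+$ for invariant measures on $J$ (via the Pesin-theory/Fatou-set argument), a point the paper treats as immediate from the relation $\lambda^-+\lambda^+=\log b$ and the absence of attracting behavior in $J$ stated earlier in its Section 8.1.
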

\begin{proof}
We assume that $J\sp*\cap J_0=\emptyset$, and that the set $\per$ is
not uniformly expanding at the period. In this case we can assume that
for every $n\geq1$, there exist a periodic point $p_n$ such that
\[||Df\sp{-k\pi(p_n)}|_{F(p_n)}||<\left(\frac{n-1}{n}\right)\sp{
k\pi(p_n)},\]
for every $k\geq1$. Thus we have
\begin{equation}\label{01111}
\log\left(\frac{n}{n-1}\right)>\frac{1}{k\pi(p_n)}\log\left(||Df\sp{
k\pi(p_n)}|_{F(p_n)}
||\right).
\end{equation}
Since that $\lambda\sp+(p_n)>0$, we can find $k_n$ great enough such
that
\begin{equation}\label{02}
\frac{1}{k_n\pi(p_n)}\log\left(||Df\sp{k_n\pi(p_n)}|_{F(p_n)}
||\right)>\frac{1}{n}.
\end{equation}
Now we define
\[\nu_n=\frac{1}{k_n\pi(p_n)}\sum\sp{k_n\pi(p_n)}_{j=1}\delta_{
f\sp{j}(p_n)},\]
be a sequence of $f$-invariant measures that, taking a subsequence if
necessary, we can assume that $\nu_n\rightarrow \nu$. It follows from
the inequalities (\ref{01111}) and (\ref{02}) that
\[\int\log||Df|_F||d\nu=\lim_{n\rightarrow\infty}
\int\log||Df|_F||d\nu_n=0,\]
that is a contradiction with $J_0=\emptyset$.
\end{proof}

Recently Christian Bonatti, Shaobo Gan and Dawei Yang, have proven an
more general case of the previous proposition and that contain this
(see \cite{betal}). In the work of Bonatti {\it Et al.},
an important hypothesis in the proof is that his compact invariant set
is a homoclinic class, and these is the case of $J\sp*$; but we don't
use this fact in the previous proof, however homoclinic class is a
hypothesis used in the proof of Forn\ae{}ss Theorem. We conclude this
subsection with the statement of Theorem of Bonatti Et al..

\begin{teo}[{\bf Bonatti-Gan-Yang}]Let $p$ be a hyperbolic periodic
point of a diffeomorphisms $f$ on a compact manifold $M$. Assume that
its homoclinic class $H(p)$ admits a (homogeneous) dominated splitting
$T_{H(p)}M=E\oplus F$ with $E$ contracting and
$\dim(E)=\textrm{ind}(p)$.

If $f$ is uniformly $F$-expanding at the period on the set of periodic
points $q$ homoclinically related to $p$, then $F$ is uniformly
expanding on $H(p)$.
\end{teo}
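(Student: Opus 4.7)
The plan is a proof by contradiction using the standard program for promoting periodic-orbit information to uniform behavior on a homoclinic class. Suppose $F$ is not uniformly expanding on $H(p)$. Then for each $n$ one can find a point $x_n \in H(p)$ and an integer $m_n \to \infty$ with $\|Df^{-m_n}|_{F(x_n)}\| \geq (1-1/n)^{m_n}$. Averaging the Dirac masses along the backward orbit segments $\{x_n, f^{-1}(x_n), \ldots, f^{-m_n+1}(x_n)\}$ through a Krylov--Bogolyubov extraction produces an $f$-invariant Borel probability measure $\mu$ supported in $H(p)$ with $\int \log \|Df^{-1}|_F\|\,d\mu \geq 0$, i.e. whose largest Lyapunov exponent along $F$ is non-positive. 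Passing to an ergodic component and using the dominated splitting (together with the fact that $F$ cannot contract on any invariant measure, otherwise $E \oplus F$ would not be dominated with $E$ the strongest contracting bundle), one arranges that the top $F$-Lyapunov exponent of $\mu$ equals exactly zero.

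Next I would apply a Pliss-type selection lemma to the cocycle $\log\|Df^{-1}|_F\|$ to extract a positive-$\mu$-measure set of points $x$ possessing ``backward hyperbolic times of trivial expansion'': points for which $\prod_{i=0}^{k-1}\|Df^{-1}|_{F(f^{-i}(x))}\| \geq \rho^{k}$ for all $k\geq 0$, with $\rho$ as close to $1$ as desired. Applying Ma\~n\'e's ergodic closing lemma to these generic points produces $C^1$-small perturbations of $f$ admitting periodic orbits that shadow long backward orbit segments of $x$ and inherit near-trivial period-$F$-expansion. Because the dominated splitting $E\oplus F$ has $E$ contracting with $\dim E = \mathrm{ind}(p)$, and because domination is robust, the resulting periodic orbits are hyperbolic saddles of the same index as $p$ and lie close to $H(p)$.

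The crux is then to upgrade ``close to $H(p)$'' to ``homoclinically related to $p$'', so that the newly produced periodic orbits actually fall under the hypothesis on uniform $F$-expansion at the period and furnish the contradiction. For this I would invoke the Hayashi connecting lemma (or, in the dominated-splitting context, the Liao/Pujals--Sambarino shadowing techniques) to transversally connect the stable and unstable manifolds of the produced saddle to those of $p$ via $C^1$-small perturbations, thereby placing the orbit inside the homoclinic class $H(p)$. A standard residual-set argument then transfers the bad period-$F$-expansion behavior back to $f$ itself. Once that is done, the sequence of produced periodic orbits satisfies $\|Df^{-\pi(q_n)}|_{F(q_n)}\| \geq \rho^{\pi(q_n)}$ with $\rho \to 1$, directly contradicting the uniform $F$-expansion at the period hypothesis. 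I expect the homoclinic-relation step to be the main obstacle: the ergodic closing lemma alone only yields proximity to $H(p)$, and turning proximity into genuine homoclinic relation is precisely where the technical weight of the Bonatti--Gan--Yang argument sits.
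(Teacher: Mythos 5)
First, a point of comparison: the paper does not prove this statement at all. It is quoted verbatim from Bonatti--Gan--Yang \cite{betal} and used only as context; the analogous facts the paper actually needs (Proposition \ref{JJ} and Theorem \ref{JJJ}) are obtained either by a direct empirical-measure argument special to H\'enon maps or by invoking Ma\~n\'e's Theorem 2.1. So there is no in-paper proof to match your proposal against, and your attempt has to be judged on its own.

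Your first half is fine: negating uniform expansion, using submultiplicativity of $\|Df^{-1}|_F\|$ along backward segments, and a Krylov--Bogolyubov extraction does yield an invariant measure $\mu$ on $H(p)$ with $\int\log\|Df^{-1}|_F\|\,d\mu\geq 0$. The genuine gap is in the second half. The ergodic closing lemma and the Hayashi connecting lemma are \emph{perturbation} results: they produce periodic orbits of diffeomorphisms $g$ that are $C^1$-close to $f$, not of $f$ itself. The hypothesis you must contradict --- uniform $F$-expansion at the period on the periodic points homoclinically related to $p$ --- is a hypothesis about the fixed map $f$, and a ``standard residual-set argument'' transferring the bad periodic orbits of nearby $g$'s back to $f$ does not exist (residual arguments give statements about generic $f$, whereas the theorem is for an arbitrary $f$ satisfying the hypotheses). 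This is exactly why Ma\~n\'e's Theorem 2.1, as quoted in the paper, produces genuine periodic points of $f$ in $M(f,V)$ with no perturbation: the correct tool is Liao's selecting lemma together with the shadowing lemma for quasi-hyperbolic strings, which exploits the dominated splitting $E\oplus F$ (with $E$ uniformly contracting) to close, inside the dynamics of $f$ itself, orbit segments along which $F$ expands at a rate arbitrarily close to $1$. The same domination also resolves the issue you flagged as the main obstacle: the periodic orbits so obtained have index $\dim E=\mathrm{ind}(p)$ and stable/unstable manifolds of uniform size, so they are automatically homoclinically related to $p$ by a $\lambda$-lemma argument, with no connecting-lemma perturbation needed. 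As written, your proof produces a contradiction only for maps near $f$, which proves nothing about $f$.
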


\subsection{Proof of Theorem \ref{JJJ}}
First one, we present the Theorem 2.1 due to Ma\~n\'e in \cite{Ma}.
Let $f$ be a diffeomorphisms of $C\sp1$ class in a Riemannian manifold
$M$ of any dimension, and $\Lambda$ be a compact invariant by $f$. A
dominated splitting $T\Lambda=E\oplus F$ is say {\it homogeneous} if
the dimension of the subspace $E(x)$ is constant for every $x\in
\Lambda$. We say that a compact neighborhood $U$ of $\Lambda$ is {\it
admissible} if the set $M(f,U)=\cap_{n\in\Z}f\sp n(U)$ has one and
exactly one homogeneous dominated splitting
$TM(f,U)=\widehat{E}\oplus\widehat{F}$ extending the splitting
$T\Lambda=E\oplus F$. It is known, that if $T\Lambda$ has a
homogeneous dominated splitting, then $\Lambda$ has an admissible
neighborhood $U$ (see \cite{HPS} for instance).

\begin{teo}
Let $\Lambda$ be a compact invariant set of $f\in\textrm{Diff}\sp1(M)$
such that $\Omega(f|_\Lambda)=\Lambda$, let $T_\Lambda M=E\oplus F$ be
a homogeneous dominated splitting such that $E$ is contracting and
suppose $c>0$ is such that the inequality \begin{equation}\label{03}
\liminf_{n\rightarrow\infty}\frac{1}{n}\sum_{j=1}\sp
n\log||(Df)\sp{-1}|_{F(f\sp j(x))}||<-c
\end{equation}
holds for a dense set of points $x\in \Lambda$. Then either $F$ is
expanding (and therefore $\Lambda$ is hyperbolic) or for every
admissible neighborhood $V$ of $\Lambda$ and every $0<\gamma<1$ there
exists a periodic point $p\in M(f,V)$ with arbitrarily large
period $N$ and satisfying
\[\gamma\sp N\leq\prod_{j=1}\sp N||(Df)\sp{-1}|_{\widehat{F}(f\sp
j(p))}||<1\]
where $\widehat{F}$ is given be the unique homogeneous dominated
splitting $TM(f,V)=\widehat{E}\oplus\widehat{F}$ that extend
$T\Lambda=E\oplus F$.
\end{teo}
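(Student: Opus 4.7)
The plan is to prove the dichotomy by supposing $F$ is not uniformly expanding on $\Lambda$ and then producing, for any admissible neighborhood $V$ of $\Lambda$ and any $0<\gamma<1$, a periodic point $p\in M(f,V)$ with arbitrarily large period $N$ satisfying
\[\gamma\sp N\leq\prod_{j=1}\sp N||(Df)\sp{-1}|_{\widehat F(f\sp j(p))}||<1.\]
The strict upper bound $<1$ will be essentially free once the produced orbit inherits a positive Lyapunov exponent from the density hypothesis; the real content is the lower bound, which forces a carefully controlled deficit of expansion along the periodic orbit.

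First I would translate the failure of uniform expansion of $F$ into the existence of arbitrarily long orbit pieces $x,f(x),\ldots,f\sp n(x)$ inside $\Lambda$ along which $\prod_{j=1}\sp n||(Df)\sp{-1}|_{F(f\sp j(x))}||\geq\gamma\sp n$: otherwise a uniform rate $\gamma$ would hold on all $\Lambda$, and a standard submultiplicative argument would give uniform expansion. By continuity of $\widehat F$ on $V$ and compactness, for any prescribed admissible $V$ such pieces can be chosen to remain inside $M(f,V)$.

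Second I would exploit the liminf hypothesis~(\ref{03}). On a dense subset of $\Lambda$ the Birkhoff averages of $\log||(Df)\sp{-1}|_{F(f\sp j(x))}||$ are strictly less than $-c$, so $F$ has genuine Lyapunov exponent at rate $\geq c$ on dense orbits. An application of Pliss's lemma to the sequence $a_j=\log||(Df)\sp{-1}|_{F(f\sp j(x))}||$ then yields, for each such $x$, infinitely many hyperbolic times at which all forward partial products of $||(Df)\sp{-1}|_F||$ are bounded by $e\sp{-\alpha n}$ for some fixed $0<\alpha<c$. At these times the dominated splitting $E\oplus F$ produces local $F$-plaques of uniform size, providing the geometric input needed to close orbits and to transport norm estimates across a shadow.

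Third I would close. Applying Pugh's $C\sp1$ closing lemma, or more directly Ma\~n\'e's ergodic closing lemma, I would replace a chosen long non-expanding orbit piece inside $M(f,V)$ by the orbit of a nearby periodic point $p$ of $f$ with period $N$ still lying in $M(f,V)$. Because $\widehat E\oplus\widehat F$ extends $E\oplus F$ continuously on $M(f,V)$ and the norms $||(Df)\sp{-1}|_{\widehat F(\cdot)}||$ are uniformly continuous, the Birkhoff average along the periodic orbit stays within $\varepsilon$ of the corresponding average along the original orbit piece, so for $\varepsilon$ small enough in terms of $\gamma$ we inherit $\prod_{j=1}\sp N||(Df)\sp{-1}|_{\widehat F(f\sp j(p))}||\geq\gamma\sp N$, while the hyperbolic-time input keeps the product $<1$.

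The main obstacle is exactly this closing step: one must simultaneously keep the closed periodic orbit inside the admissible neighborhood $M(f,V)$, so that the extended splitting $\widehat F$ is defined and continuous along it, and transfer the quantitative product estimate from a non-hyperbolic orbit piece to the periodic approximant without loss. This is the technical heart of Ma\~n\'e's original argument in \cite{Ma} and requires combining the ergodic closing lemma with the uniform plaque families supplied by the dominated splitting at hyperbolic times; since the statement is invoked in the paper as a black box, my proposal can afford to treat this as a citation rather than rebuild the construction from scratch.
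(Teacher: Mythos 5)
You should first note that the paper contains no proof of this statement at all: it is quoted verbatim as Theorem 2.1 of Ma\~n\'e's $C^1$ stability paper \cite{Ma} and used as a black box to derive Theorem \ref{JJJ} (the author even remarks that an independent argument can be found in \cite{pancho}). So your decision to treat the hard construction as a citation is exactly what the paper itself does, and to that extent the proposal is consistent with the source.

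That said, the internal sketch you give has one concrete flaw that would sink it if carried out. Both Pugh's $C^1$ closing lemma and Ma\~n\'e's ergodic closing lemma produce a periodic orbit of a $C^1$-small \emph{perturbation} $g$ of $f$, whereas the theorem asserts the existence of a periodic point of $f$ itself lying in $M(f,V)$; a perturbative closing cannot deliver that, and the quantitative product estimate along $\widehat F$ would in any case have to be transported back from $g$ to $f$, which is not possible in the $C^1$ topology. Ma\~n\'e's actual mechanism closes orbits of $f$ without perturbation: the recurrence hypothesis $\Omega(f|_\Lambda)=\Lambda$ supplies nearly-returning orbit segments, a Pliss-type selection applied to $\log||(Df)^{-1}|_{F}||$ (your second step, which is fine) turns them into quasi-hyperbolic strings, and a shadowing lemma for such strings --- exploiting the contraction on $E$, the domination, and the uniform plaque families --- yields a genuine periodic orbit of $f$ inside $V$ carrying the two-sided bound $\gamma^N\leq\prod_{j=1}^N||(Df)^{-1}|_{\widehat F(f^j(p))}||<1$. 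Also, the upper bound $<1$ is not ``essentially free'': arranging both inequalities simultaneously is precisely the point of the selection argument, since a carelessly chosen non-expanding segment could close up to an orbit whose product is $\geq1$. If you cite \cite{Ma} for the whole statement, as the paper does, none of this matters; but as a sketch of the proof the closing step as written is wrong.
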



In terms of the hypothesis of the Ma\~n\'e Theorem, is clear that are
satisfied for a dissipative H\'enon map: $f$ of $C\sp1$ class and
homogeneous dominated splitting. The inequality (\ref{03}) it is
satisfies with $c=\log(d)$ where $d$ is the degree of the map $f$. In
fact, in \cite{bls1} the authors proof that
\[
\lambda\sp-(p)=\log(b)-\lambda\sp+(p)\leq-\log(d)<0<\log(d)\leq\lambda
\sp+(p)\]
for every regular point for maximal entropy measure $\mu$, every
periodic saddle point is a regular point, and in \cite{bs1} is proved
that the saddle periodic point are dense in $J\sp*$.

Also we remark that any periodic point in $M(f,V)$ for some $V$ an
admissible neighborhood of $J\sp*$, is in fact an element of\,
$\per\subset J\sp*$.

\begin{proof}[{\bf Proof of Theorem \ref{JJJ}}] By the Ma\~n\'e
Theorem, if $J\sp*$ is not hyperbolic then, in par\-ti\-cu\-lar, for
every $n>0$ there exist a periodic point $p_n$ of period $N(n)\geq n$
such that
\[\log\left(\frac{n-1}{n}\right)\leq\frac{1}{N(n)}\log||(Df)\sp{-N(n)}
|_{{F}(p_n)} ||<0.\]
To end the proof, proceed in the same way as in the Proposition
\ref{JJ}.
\end{proof}

\subsection{Dominated splitting and partially hyperbolicity for
H\'enon maps}

In this subsection, we prove that dominated splitting H\'enon map, are
in fact partially hyperbolic. The complete statement is the following.

\begin{pr}\label{1aprop}
Let $f:\Ctwo\rightarrow\Ctwo$ a complex H\'enon map, with dominated
splitting $T_J\Ctwo=E\oplus F$ in $J$. Then
\begin{enumerate}
\item If $f$ is volume preserving, then $f$ is uniformly
hyperbolic in $J$.
\item If $f$ is dissipative, then $f$ is partially hyperbolic in
$J$ and the $E$ direction is a stable direction.
\end{enumerate}
\end{pr}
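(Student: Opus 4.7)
The plan is to combine the dominated splitting inequality with the (crucial) fact that the Jacobian determinant of a complex H\'enon map is a nonzero complex constant $\delta$, so $|\det Df^n(x)|=|\delta|^n$ for every $x$ and $n$; dissipativity corresponds to $|\delta|<1$, volume preservation to $|\delta|=1$. The conclusion will then come essentially from solving two scalar inequalities.

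First, I would record a standard consequence of dominated splitting on a compact set: the Hermitian angle $\theta(x)\in(0,\pi/2]$ between the complex lines $E(x)$ and $F(x)$ is uniformly bounded away from $0$ on $J$. This follows from $C^0$-continuity of the splitting (a classical consequence of dominance) together with compactness of $J$, and is the main technical point I expect to verify carefully.

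Second, since $E$ and $F$ are each complex one-dimensional, for unit vectors $e(x)\in E(x)$, $f(x)\in F(x)$ one has $|\det_{\C}(e(x),f(x))|=\sin\theta(x)$. Writing $Df(x)$ in the basis $\{e(x),f(x)\}$ at the source and $\{e(f(x)),f(f(x))\}$ at the image, telescoping yields
\[
|\det Df\sp n(x)| \;=\; \|Df\sp n|_{E(x)}\|\cdot\|Df\sp n|_{F(x)}\|\cdot\frac{\sin\theta(f\sp n(x))}{\sin\theta(x)}.
\]
Combined with the angle bound this gives the uniform asymptotic $\|Df\sp n|_{E(x)}\|\cdot\|Df\sp n|_{F(x)}\|\asymp|\delta|\sp n$.

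Third, let $C_0>0$ and $\tau\in(0,1)$ be the dominated splitting constants, so that $\|Df\sp n|_{E(x)}\|\leq C_0\tau\sp n\|Df\sp n|_{F(x)}\|$. Multiplying the domination inequality by $\|Df\sp n|_E\|$ and plugging in the previous step gives
\[
\|Df\sp n|_{E(x)}\|\;\leq\;C_1\,\bigl(\sqrt{\tau|\delta|}\bigr)\sp n,
\]
while the analogous manipulation for $F$, together with $\|Df\sp{-n}|_{F(x)}\|=\|Df\sp n|_{F(f\sp{-n}(x))}\|\sp{-1}$, gives
\[
\|Df\sp{-n}|_{F(x)}\|\;\leq\;C_1\,\bigl(\sqrt{|\delta|/\tau}\bigr)\sp{-n}.
\]

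Finally, set $\lambda=\sqrt{\tau|\delta|}$ and $\mu=\sqrt{|\delta|/\tau}$, so that $\lambda/\mu=\tau<1$. In the dissipative case $|\delta|<1$ forces $\lambda<1$, so $E$ is a stable direction and, since $\lambda<\mu$, the splitting is partially hyperbolic in the broad sense of Definition \ref{part_hip}. In the volume-preserving case $\lambda=\sqrt{\tau}<1<1/\sqrt{\tau}=\mu$, so both rates sit on the ``correct'' sides of $1$ and the splitting is uniformly hyperbolic. The main obstacle, as noted, is the uniform angle lower bound in the first step; once the determinant identity is set up, the rest is algebraic manipulation.
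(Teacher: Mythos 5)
Your proof is correct. The paper reaches the same two bounds $\|Df^n|_{E}\|\lesssim(\sqrt{b\lambda})^n$ and $\|Df^{-n}|_{F}\|\lesssim(\sqrt{b/\lambda})^{-n}$ (with $b=|\delta|$ and $\lambda$ your $\tau$), and then finishes with exactly the algebra you give: $\sqrt{b\lambda}<\sqrt{b/\lambda}$ always, $\sqrt{b\lambda}<1$ when $b\leq1$, and both rates straddle $1$ when $b=1$. The difference is in how the key intermediate inequality is obtained. The paper cites a characterization of dominated splitting from the author's thesis \cite{pancho}, namely that domination is equivalent to $b^n/\|Df^n v\|^2\leq C\lambda^n$ for unit $v\in F$ and $b^{-n}/\|Df^{-n}v\|^2\leq C\lambda^n$ for unit $v\in E$, and uses it as a black box. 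You instead derive the equivalent statement $\|Df^n|_{E(x)}\|\cdot\|Df^n|_{F(x)}\|\asymp|\delta|^n$ directly from the constancy of the Jacobian of a H\'enon map together with the determinant--angle identity and the uniform lower bound on the angle between $E$ and $F$ (which the paper itself invokes elsewhere, with reference to \cite{pesin}, and which does follow from continuity of a dominated splitting plus compactness of $J$). Your route has the advantage of being self-contained within the paper; it is essentially the proof of the cited characterization. One cosmetic caution: your identity $\|Df^{-n}|_{F(x)}\|=\|Df^n|_{F(f^{-n}(x))}\|^{-1}$ uses that $F$ is one-complex-dimensional, which holds here but should be said, and your notation $f(x)$ for a unit vector of $F(x)$ collides with the map $f$.
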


To the proof of this Proposition, we use a characterization of
dominated splitting that is proved in \cite{pancho}.

\begin{pr}
Let $f$ be a H\'enon map  with $b=|\det(Df)\, |$, and let
$T_J{\Ctwo}=E\oplus F$ be a splitting. The following statement are
equivalents:
\begin{enumerate}
\item The splitting $T_J\Ctwo=E\oplus F$ is dominated;
\item There exist $C>0$ and $0<\lambda<1$ such that:
\begin{itemize}
\item[{\it a)}] For every unitary vector $v\in F$ and
$n\geq1$
\[\frac{b\sp n}{||Df\sp nv||\sp2}\leq C\lambda\sp n,\]
\item[{\it b)}] For every unitary vector $v\in E$ and
$n\geq1$
\[\frac{b\sp{-n}}{||Df\sp{-n}v||\sp2}\leq C\lambda\sp n.\]
\end{itemize}
\end{enumerate}
\end{pr}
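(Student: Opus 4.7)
My plan is to exploit the fact that complex H\'enon maps have constant complex Jacobian of modulus $b$, so $|\det Df\sp n(x)|=b\sp n$ for every $x\in J$ and $n\geq 1$. Since $E(x)$ and $F(x)$ are complementary complex lines in $\Ctwo$, the wedge-product computation in $\Lambda\sp2\Ctwo\cong\C$ gives, for unit vectors $v\in E(x)$ and $w\in F(x)$,
\[
||Df\sp n v||\cdot||Df\sp n w||\cdot\sin\theta(f\sp n x)\;=\;b\sp n\sin\theta(x),
\]
where $\sin\theta(y)$ denotes the sine of the angle between $E(y)$ and $F(y)$. This is the single algebraic fact around which the whole proof is organized. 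I also use the elementary one-dimensional inverse relation $||Df\sp{-n}|_{E(f\sp n x)}||=||Df\sp n|_{E(x)}||\sp{-1}$, and similarly for $F$, valid because $E$ and $F$ are complex one-dimensional.

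For $(1)\Rightarrow(2)$, I first recall that a dominated splitting on a compact invariant set is automatically continuous, so on $J$ the angle between $E$ and $F$ is bounded below by some $\sigma_->0$. The identity above then yields the commensurability
\[
\sigma_- b\sp n \;\leq\; ||Df\sp n v||\cdot||Df\sp n w|| \;\leq\; \sigma_-\sp{-1}b\sp n.
\]
Combining this with the domination inequality $||Df\sp n v||\leq C\lambda\sp n||Df\sp n w||$ gives $||Df\sp n w||\sp2\geq \sigma_- b\sp n/(C\lambda\sp n)$, which is exactly condition (a). For (b) one can either repeat the argument for $f\sp{-1}$ (whose Jacobian has modulus $b\sp{-1}$ and for which $E$ and $F$ swap the roles of dominated and dominating directions), or derive $||Df\sp n v||\sp2\leq \sigma_-\sp{-1}Cb\sp n\lambda\sp n$ from the same commensurability and translate it into the desired bound on $||Df\sp{-n}|_{E}||\sp2$ via the one-dimensional inverse relation.

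For $(2)\Rightarrow(1)$, the pleasant observation is that the angle bound is not needed at all. From (a) one reads off $||Df\sp n w||\sp2\geq b\sp n/(C\lambda\sp n)$, and from (b), rewritten using the one-dimensional inverse relation, one obtains $||Df\sp n v||\sp2\leq Cb\sp n\lambda\sp n$. Taking the ratio gives $||Df\sp n v||/||Df\sp n w||\leq C\lambda\sp n$ (with updated constants), which is the dominated splitting condition. The main obstacle in the whole argument is really a bookkeeping one: identifying how the backward statement (b) about $Df\sp{-n}$ acting on $E$ translates, via the one-dimensional inverse relation, into a forward statement about the growth of $||Df\sp n|_E||$, and then assembling these two one-sided bounds against the Jacobian identity. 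Once this translation is set up, both directions reduce to elementary manipulations.
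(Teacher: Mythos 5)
Your argument is correct. Note that the paper itself does not prove this proposition: it is stated as a black box with a pointer to the author's thesis \cite{pancho}, so there is no in-paper proof to compare against. That said, your proof is the natural self-contained one and all the steps check out: the wedge-product identity $||Df^n v||\cdot||Df^n w||\cdot\sin\theta(f^n x)=b^n\sin\theta(x)$ is exactly the right invariant to exploit when both $E$ and $F$ are complex lines and $|\det Df|\equiv b$, the one-dimensional inverse relation $||Df^{-n}|_{E(f^n x)}||=||Df^{n}|_{E(x)}||^{-1}$ correctly converts statement (b) into a forward upper bound $||Df^n|_{E}||^2\leq Cb^n\lambda^n$, and the two one-sided bounds divide out to give domination with constant $C^2$ and rate $\lambda^2$ (or $C,\lambda$ after renaming). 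Your observation that $(2)\Rightarrow(1)$ needs no angle control is a genuine small bonus, since the Jacobian only enters through the product of the two quantities being compared. The only points worth making explicit in a written version are (i) that the splitting is assumed $Df$-invariant (otherwise neither side of the equivalence is meaningful, and the wedge identity would not localize $Df^nv$ in $E(f^nx)$), and (ii) the standard fact that a dominated splitting over a compact invariant set has angle uniformly bounded below, which you correctly invoke only in the direction $(1)\Rightarrow(2)$.
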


\begin{proof}[{\bf Proof of Proposition \ref{1aprop}.}]
From the previous Proposition, we can assume without loss of
generality that there exist $0<\lambda<1$ such that
\begin{itemize}
\item[(a)] $\displaystyle\frac{b\sp n}{||Df_x\sp
nu_x||\sp2}<\lambda\sp n$, for
every $n$ and $x\in J$,
\item[(b)]$\displaystyle\frac{b\sp{-n}}{||Df_x\sp{-n}v_x||\sp2}
<\lambda\sp n$, for every $n$ and $x\in J$,
\end{itemize}
where $u_x\in F(x)$ and $v_x\in E(x)$ are unitary vectors, and every
$x\in J$.

Replacing the previous inequality for the direction $E(x)$, it
follows that
\[||Df^{-n}_xv_x||^2>\left(\frac{1}{b\lambda}\right)^n.\]
Replacing the inverse function of $Df^{-n}$ in the previous
inequality, and taking $\lambda_0=\sqrt{b\lambda}$, we obtain that
\[||Df^n_xv_x||\leq \lambda_0^n\, \, \, \, \Longrightarrow\, \, \, \,
||Df^n_x|_{E(x)}||\leq\lambda_0^n.\]
Similarly for the direction $F(x)$, let $u_x$ a unitary vector in this
direction we obtain
\[||Df^{n}_xu_x||^2>\left(\frac{b}{\lambda}\right)^n,\]and taking
$\mu_0=\sqrt{b/\lambda}$ it follows that
\[||Df^n|_{F_x}||\geq \mu_0^n.\]
Thus we have\[\lambda^2<1\, \Longleftrightarrow\,
b\lambda<\frac{b}{\lambda}\, \Longleftrightarrow\,
\lambda_0<\mu_0.\]This prove that any complex H\'enon map with
dominated splitting in $J$ is partially hyperbolic, so this are
equivalent notions in this context.

To prove the item 1, i.e. the volume preserving case $b=1$, is only
necessary to observe that
\[\lambda_0=\sqrt{\lambda}<1<\sqrt{1/\lambda}=\mu_0,\]
and for the item 2, i.e. the dissipative case $b<1$, we have that
so $\lambda_0=\sqrt{\lambda b}<1$, then $E$ is a stable direction, as
is desired.
\end{proof}

\subsection{Weak forward expansivity in $J\sp*$}
Periodic saddle point in $J\sp*$ have unstable manifold, and this can
by characterized as the set of point in which the function $f$ has
asymptotically expansiveness, and the constant of expansivity is
related with the rate of expansion of the derivative in the unstable
direction. This implies, that the map is forward expansivity along the
orbit of a periodic point. The problem appear because the constant of
expansiveness in the unstable direction is not uniform in the set of
periodic point, so the forward expansivity is not uniform in the set
of periodic saddle point.

Notwithstanding the above fact, in each unstable manifold of a
periodic saddle point, there are many point (an open set in each
unstable manifold) that goes to infinity by positives iterates of $f$.
Then we can say that in this points, we have an uniform forward
expansivity. This property over periodic saddle points, for a
dissipative H\'enon map with dominated splitting, can be recovered
over each point in the support of the maximal entropy measure $J\sp*$.
This is stated in the following proposition.

For notations and definition of the Julia set $J$, the support of
the measure of maximal entropy $J\sp*$ and the set $U\sp+$, see
\cite{bs1}.

\begin{pr}\label{hol6}
Let $f$ be dissipative H\'enon map, with dominated splitting in
$J\sp*$. Then for every $x\in J\sp *$, holds that
$\www{cu}{x}{loc}\cap U\sp+\neq\emptyset$.
\end{pr}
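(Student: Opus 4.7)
The plan is to argue by contradiction: assume that some $x\in J\sp*$ satisfies $\www{cu}{x}{loc}\subset K\sp+$, and derive an incompatibility via a Liouville/Montel argument on the forward iterates of the leaf.

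I would begin with the saddle case. Saddle periodic points are dense in $J\sp*$ by Bedford--Smillie, and at any saddle $p$ dominated splitting coincides with hyperbolicity, so $\www{cu}{p}{loc}=\www{u}{p}{loc}$ is a holomorphic disk (Theorem \ref{hol1} applied to $f\sp{-1}$) and the global manifold $\ww{u}{p}$ is biholomorphic to $\C$ via the standard saddle linearization. Since $K\sp+$ is bounded in $\Ctwo$, Liouville's theorem forces $\ww{u}{p}\cap U\sp+\neq\emptyset$, and $f$-invariance of $U\sp+$ pulls any escaping point back by iterates of $f\sp{-1}$ into $\www{u}{p}{loc}\subset\www{cu}{p}{loc}$. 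Moreover, the restriction of the Green function $G\sp+$ to $\ww{u}{p}\cong\C$ is a nonconstant nonnegative subharmonic function, so its zero set is polar and $\ww{u}{p}\cap U\sp+$ is open dense in $\ww{u}{p}$.

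Next I would pass from saddles to $x$ via a normal-family argument. Take saddles $p_k\to x$; by Theorem \ref{hol0} the embeddings $\phi\sp{cu}(p_k)$ converge to $\phi\sp{cu}(x)$ in $C\sp1$, and since the $\phi\sp{cu}(p_k)$ are holomorphic maps $\Delta_l(0,\e)\to\Ctwo$ with uniformly bounded image (by compactness of $\Lambda$), the Weierstrass theorem makes $\phi\sp{cu}(x)$ itself holomorphic. Under the contrapositive assumption $\phi\sp{cu}(x)(\Delta_l(0,\e))\subset K\sp+$, forward invariance of the bounded set $K\sp+$ yields a sequence of holomorphic maps $\psi_n=f\sp n\circ\phi\sp{cu}(x):\Delta_l(0,\e)\to K\sp+$. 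Montel's theorem gives normality of $\{\psi_n\}_{n\ge 0}$, so
\[\|D\psi_n(0)\|=\|Df\sp n(x)|_{F(x)}\circ D\phi\sp{cu}(x)(0)\|\]
is uniformly bounded in $n$.

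The principal obstacle is then to contradict this Montel bound by exhibiting unbounded growth of $\|Df\sp n(x)|_{F(x)}\|$. Partial hyperbolicity (Proposition \ref{1aprop}) supplies the pointwise lower bound $\|Df\sp n|_{F(x)}\|\ge C\sp{-1}\mu_0\sp n$ with $\mu_0=\sqrt{b/\lambda}$, which closes the argument whenever $\mu_0>1$. In the complementary regime the plan is to transfer expansion-at-period from approximating saddles to $x$: for $p_k\to x$ with period $\pi(p_k)$ and unstable eigenvalue $|\lambda_{p_k}|>1$, continuity of $Df\sp{\pi(p_k)}$ keeps $\|Df\sp{\pi(p_k)}(x)|_{F(x)}\|$ close to $|\lambda_{p_k}|$, and selecting saddles along which this diverges yields $n_k=\pi(p_k)$ violating the Montel bound; the remaining subcase where no such saddles exist places us in case (ii) of Theorem \ref{JJJ} (so $J\sp*\cap J_0\neq\emptyset$), and a separate argument via the zero-exponent measure handles those $x$.
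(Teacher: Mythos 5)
Your proposal takes a genuinely different route from the paper, and unfortunately the route has gaps. The paper's argument is purely topological: it first gets $\ww{u}{p}\cap U\sp+\neq\emptyset$ for a saddle $p$ (via density of $\ww{u}{p}$ in $J\sp-$ and $J\sp-\cap U\sp+\neq\emptyset$), then uses that $J\sp*=H(p)$ is a homoclinic class to produce disks $D_k\subset\ww{s}{p}$ that are $C\sp1$-close to $\www{s}{p_k}{loc}$ for saddles $p_k\to x$, so that $\ww{s}{p}$ meets $\www{cu}{x}{loc}$ transversally; the $\lambda$-lemma for $f\sp{-1}$ then makes backward iterates of the open invariant set $U\sp+$ accumulate on compact parts of $\ww{s}{p}$ and hence hit $\www{cu}{x}{loc}$. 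No holomorphy of the leaf at $x$ is ever needed. In your version, the first problem is the repeated premise that $K\sp+$ is bounded: it is not (it contains stable manifolds of saddles, each a copy of $\C$). In the saddle step this is repairable ($\ww{u}{p}\subset K\sp-$, so $\ww{u}{p}\subset K\sp+$ would force $\ww{u}{p}\subset K=K\sp+\cap K\sp-$, which is compact, and Liouville applies), and for Montel one can substitute the filtration fact that forward orbits of compact subsets of $K\sp+$ are uniformly bounded. The serious gap is the claim that the embeddings $\phi\sp{cu}(p_k)$ are holomorphic on the fixed polydisc $\Delta_l(0,\e)$, so that Weierstrass gives holomorphy of $\phi\sp{cu}(x)$. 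The cu-leaf at a saddle $p_k$ is only known to coincide with the holomorphic unstable manifold on a disk whose radius depends on the hyperbolicity constants of $p_k$, and these are not uniform in $k$ (the per-iterate expansion $|\lambda_{p_k}|\sp{1/\pi(p_k)}$ may tend to $1$); on a disk of uniform size the graph-transform leaf is a priori only $C\sp1$. Producing holomorphic cu-leaves of uniform size is precisely the content of Theorem \ref{teo.cueq}, which requires the dynamically-defined hypothesis you do not have here.

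Even granting holomorphy and the Montel bound, the endgame does not close. You need $||Df\sp n(x)|_{F(x)}||$ to be unbounded for the \emph{given} $x\in J\sp*$. Partial hyperbolicity only yields the rate $\mu_0=\sqrt{b/\lambda}$, and if $\mu_0>1$ then $J$ is already uniformly hyperbolic and the proposition is trivial; so the only interesting regime is $\mu_0\leq1$, where this bound gives nothing. The proposed transfer of expansion-at-the-period from approximating saddles fails because the modulus of continuity of $Df\sp{\pi(p_k)}$ degrades as $\pi(p_k)\to\infty$ (so ``$x$ close to $p_k$'' does not keep $||Df\sp{\pi(p_k)}(x)|_{F(x)}||$ close to $|\lambda_{p_k}|$), the multipliers $|\lambda_{p_k}|$ need not diverge, and the final subcase is deferred to ``a separate argument via the zero-exponent measure'' that is not supplied --- yet points $x$ lying in supports of zero-exponent measures are exactly the ones for which no growth of $||Df\sp n(x)|_{F(x)}||$ is available. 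I would recommend abandoning the normal-families strategy for this proposition and arguing as the paper does, through the homoclinic-class structure of $J\sp*$ and the $\lambda$-lemma.
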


\begin{proof}
The statement of the Proposition is true for saddle periodic points.
In fact, for a saddle periodic point $p$, we have that $\ww{u}{p}$ is
a copy of $\C$ (see \cite{bs1}), and is dense in $J\sp-$ (see
\cite{bs2}). Also, we have that $J\sp-\cap
U\sp +\neq\emptyset$ because $J\sp-=\partial K\sp-$ and $K\sp-\cap
U\sp+\neq\emptyset$.

Thus, to proof this Proposition, we assert that the stable manifold of
$p$ intersect any local center-unstable disk. This it follows from the
fact that $J\sp*=H(p)$ is a homoclinic class of any periodic saddle
point, and that there is a uniformly contractive sub-bundle, i.e., the
direction $E$ (see Proposition \ref{1aprop}).

Let $p_k$ be a sequence of periodic saddle points, $p_k\rightarrow
x\in J\sp*$. From the continuity of the splitting, it follows that for
$k$ great enough $\www{s}{p_k}{loc}\cap \www{cu}{x}{loc}\neq
\emptyset$. Since that $p_k\in H(p)$, each $\www{s}{p_k}{loc}$ is
approximated by disc contained in $\ww{s}{p}$. More precisely, there
exist a disc $D_k\subset\ww{s}{p}$ such that
$\d_1(D_k,\www{s}{p_k}{loc})<1/k$, where $\d_1$ is the metric of the
$C\sp1$ topology.
It follows that for $k$ great enough,
$D_k\cap\www{cu}{x}{loc}\neq\emptyset$ thus $\ww{s}{p}$ intersect to
$\www{cu}{x}{loc}$.

Now since that $\ww{u}{p}\cap U\sp +\neq\emptyset$ and $U\sp+$ is
open, backward iterates of $U\sp+$ accumulates on any compact part of
$W\sp s(p)$, and imply that backward iterates of $U\sp +$ intersects
$\www{cu}{x}{loc}$.
\end{proof}


\begin{thebibliography}{XXX}

\bibitem{bls1} E. Bedford; M. Lyubich; J. Smillie.
{\it Polynomial diffeomorphisms of $C\sp 2$. IV. The measure of
maximal entropy and laminar currents.} Invent. Math.  112
(1993),  no. 1, 77--125.


\bibitem{bs1} E. Bedford; J. Smillie. {\it Polynomial
diffeomorphisms of $C\sp 2$: currents, equilibrium  measure and
hyperbolicity.} Invent. Math.  103  (1991),  no. 1, 69--99.

\bibitem{bs2} E. Bedford; J. Smillie. {\it Polynomial
diffeomorphisms of $C\sp 2$. II. Stable manifolds and
recurrence.} J. Amer. Math. Soc.  4  (1991),  no. 4, 657--679.

\bibitem{bs3} E. Bedford; J. Smillie. {\it Polynomial
diffeomorphisms of $\C\sp 2$. III. Ergodicity, exponents and
entropy of the equilibrium measure.} Math. Ann.  294  (1992),
no. 3, 395--420.

\bibitem{bs8} E. Bedford; J. Smillie. {\it Polynomial
diffeomorphisms of $\C\sp 2$. VIII. Quasi-Expansion.} American Journal
of Mathematics 124 (2002), 221–271.
%

\bibitem{betal} C. Bonatti; S. Gan; D. Yang. {\it On the
Hyperbolicity of Homoclinic Classes.}  Discrete Contin. Dyn. Syst.  25
(2009),  no. 4, 1143--1162.


\bibitem{buzzzar} G. Buzzard; {\it Kupka-Smale theorem for
automorphisms of $\bf C^n$.}  Duke Math. J.  93  (1998),  no. 3,
487--503.

%

\bibitem{F} J. E. Forn\ae{}ss. {\it The Julia set of H\'enon
maps.} Math. Ann.  334  (2006),  no. 2, 457--464.

\bibitem{fs1} J. E. Forn\ae{}ss; N. Sibony. {\it Complex H\'enon
mappings in $C\sp 2$ and Fatou-Bieberbach domains.} Duke Math.
J.  65  (1992),  no. 2, 345--380.

\bibitem{fm} S. Friedland; J. Milnor. {\it Dynamical
properties of plane polynomial automorphisms.} Ergodic Theory
Dynam. Systems  9  (1989),  no. 1, 67--99.

%

\bibitem{h} Hubbard, John H.  {\it The H\'enon mapping in the complex
domain}.
 Chaotic dynamics and fractals (Atlanta, Ga., 1985),
 101--111, Notes Rep. Math. Sci. Engrg., 2, Academic Press, Orlando,
FL,  1986.


\bibitem{ho1} J. Hubbard; R. Oberste-Vorth. {\it  H\'enon
mappings in the complex domain. I. The global topology of
dynamical space.}  Inst. Hautes \'Etudes Sci. Publ. Math.  No.
79  (1994), 5--46.

\bibitem{ho2}  Hubbard, John H. ;  Oberste-Vorth, Ralph W.  {\it
H\'enon mappings in the complex domain. II. Projective and inductive
limits of polynomials}.  Real and complex dynamical systems (Hiller\o
d, 1993),
 89--132, NATO Adv. Sci. Inst. Ser. C Math. Phys. Sci., 464, Kluwer
Acad. Publ., Dordrecht,  1995.


\bibitem{hds1b} B. Hasselblatt; Y. Pesin, {\it Partially
hyperbolic dynamical systems.} Handbook of dynamical systems. Vol. 1B,
1--55, Elsevier B. V., Amsterdam, 2006.

\bibitem{HPS} M. W. Hirsch; C. C. Pugh; M. Shub. {\it
Invariant manifolds.} Lecture Notes in Mathematics, Vol. 583.
Springer-Verlag, Berlin-New York,  1977. ii+149 pp.

%

\bibitem{jonsson} M. Jonsson; D. Varolin. {\it Stable manifolds of
holomorphic diffeomorphisms.}  Invent. Math.  149  (2002),  no. 2,
409--430.
%

\bibitem{KH} A. Katok; B. Hasselblatt. {\it Introduction to the
modern theory of dynamical systems. With a supplementary chapter by
Katok and Leonardo Mendoza.} Encyclopedia of Mathematics and its
Applications, 54. Cambridge University Press, Cambridge,  1995.

%
		
\bibitem{Ma} R. Ma\~n\'e. {\it A proof of the $C\sp 1$ stability
conjecture.}  Inst. Hautes Études Sci. Publ. Math.  No. 66  (1988),
161--210.

%
%
%
%
%

\bibitem{royden} H. L. Royden, {\it Remarks on the Kobayashi
metric.} Several complex variables, II (Proc. Internat. Conf., Univ.
Maryland, College Park, Md., 1970), pp. 125--137. Lecture Notes in
Math., Vol. 185, Springer, Berlin, 1971.

\bibitem{shub} M. Shub, {\it Global stability of dynamical systems.}
With the collaboration of Albert Fathi and R\'emi Langevin. Translated
from the French by Joseph Christy. Springer-Verlag, New York, 1987.
%

\bibitem{pesin} Y. Pesin. {\it Lectures on partial hyperbolicity and
stable ergodicity.} European Mathematical Society (EMS), Z\"{u}rich,
2004. vi+122 pp.

\bibitem{ps1} E. R. Pujals; M. Sambarino. {\it Homoclinic
 tangencies and hyperbolicity for surface diffeomorphisms.}
 Annals of Mathematics, vol. 151, 2000, p. 961–1023.

\bibitem{ps2} E. R. Pujals; M. Sambarino. {\it Density of
hyperbolicity and tangencies in sectional dissipative regions.} Ann.
I. H. Poincar\'e – AN 26 (2009) 1971–2000.

\bibitem{pancho} F. Valenzuela. {\it Dominated Splitting and
Critical Sets for Polynomials Authomorphisms on $\Ctwo$.} Doctoral
Thesis, IMPA (2009).

\end{thebibliography}
\end{document}